\DeclareMathOperator*{\argmin}{arg\,min}
\DeclareMathOperator{\baroplus}{\bar{\oplus}}
\renewcommand{\epsilon}{\varepsilon}
\newcommand{\trp}{^{\mathsf{T}}}
\newcommand{\invtrp}{^{-\mathsf{T}}}
\newcommand{\lr}[1]{\left(#1\right)}
\newcommand{\IP}[2]{\left\langle #1,#2\right\rangle}
\newcommand{\nn}[1]{\left\|#1\right\|}
\newcommand{\rvec}{\mathrm{rvec}}
\newcommand{\cvec}{\mathrm{cvec}}
\newcommand{\rmat}{\mathrm{rmat}}
\newcommand{\cmat}{\mathrm{cmat}}
\newcommand{\vect}{\mathrm{vec}}
\newcommand{\tens}[1]{\mathbf{#1}}
\newcommand{\Mnabla}{\nabla^{M}\!}
\let\originalleft\left
\let\originalright\right
\renewcommand{\left}{\mathopen{}\mathclose\bgroup\originalleft}
\renewcommand{\right}{\aftergroup\egroup\originalright}
\numberwithin{equation}{section}
\theoremstyle{plain}
\newtheorem{theorem}{Theorem}
\newtheorem{proposition}{Proposition}
\newtheorem{lemma}{Lemma}
\newtheorem{corollary}{Corollary}
\theoremstyle{remark}
\newtheorem{remark}{Remark}
\theoremstyle{definition}
\newtheorem{definition}{Definition}
\numberwithin{theorem}{section}
\numberwithin{proposition}{section}
\numberwithin{lemma}{section}
\numberwithin{corollary}{section}
\numberwithin{remark}{section}
\numberwithin{definition}{section}
\crefname{theorem}{Theorem}{Theorems}
\Crefname{theorem}{Theorem}{Theorems}
\crefname{proposition}{Proposition}{Propositions}
\Crefname{proposition}{Proposition}{Propositions}
\crefname{lemma}{Lemma}{Lemmata}
\Crefname{lemma}{Lemma}{Lemmata}
\crefname{corollary}{Corollary}{Corollaries}
\Crefname{corollary}{Corollary}{Corollaries}
\crefname{algorithm}{Algorithm}{Algorithms}
\Crefname{algorithm}{Algorithm}{Algorithms}
\crefname{appendix}{Appendix}{Appendices}
\Crefname{appendix}{Appendix}{Appendices}
\begin{document} % ============================================================

\pdfinfo{
   /Author (Arjun Vijaywargiya, Shane A. McQuarrie, Anthony Gruber)
   /Title (Tensor parametric Hamiltonian operator inference)
   /Keywords (parametric model reduction, tensor calculus, Hamiltonian systems, structure preservation, scientific machine learning)
}

% Title and authors -----------------------------------------------------------
\title{\textbf{Tensor parametric Hamiltonian operator inference}}

\author[1,2]{Arjun~Vijaywargiya}
\author[3]{Shane~A.~McQuarrie}
\author[2]{Anthony Gruber\thanks{Corresponding author. E-mail: \href{mailto:adgrube@sandia.gov}{adgrube@sandia.gov}.}}

\affil[1]{\normalsize Department of Applied and Computational Mathematics and Statistics, University of Notre Dame}
\affil[2]{\normalsize Computational Mathematics, Center for Computing Research, Sandia National Laboratories}
\affil[3]{\normalsize Scientific Machine Learning, Center for Computing Research, Sandia National Laboratories}

\date{}

\maketitle

% Abstract and keywords -------------------------------------------------------
\vspace{-.25in}

\begin{abstract}
\noindent
This work presents a tensorial approach to constructing data-driven reduced-order models corresponding to semi-discrete partial differential equations with canonical Hamiltonian structure. By expressing parameter-varying operators with affine dependence as contractions of a generalized parameter vector against a constant tensor, this method leverages the operator inference framework to capture parametric dependence in the learned reduced-order model via the solution to a convex, least-squares optimization problem. This leads to a concise and straightforward implementation which compactifies previous parametric operator inference approaches and directly extends to learning parametric operators with symmetry constraints\textemdash a key feature required for constructing structure-preserving surrogates of Hamiltonian systems. The proposed approach is demonstrated on both a (non-Hamiltonian) heat equation with variable diffusion coefficient as well as a Hamiltonian wave equation with variable wave speed.
 \end{abstract}

\noindent
\textbf{Keywords}: parametric model reduction, tensor calculus, Hamiltonian systems, structure preservation, scientific machine learning

% Body ========================================================================
\section{Introduction}
Mathematical models based on partial differential equations (PDEs) are often formed through the combination of terms describing locally distinct physical processes.  For example, an advection-diffusion equation describing the evolution of a scalar quantity $q$ on a domain $\Omega\subset\mathbb{R}^d$, given by $\partial_t q = \nabla\cdot\lr{~D\nabla q - ~v q}$ for appropriate diffusion field $~D$ and velocity $~v$, combines the term $\nabla\cdot\lr{~D\nabla q}$ expressing diffusion with a corresponding term $~v\cdot\nabla q$ describing material transport.
The practical utility of PDE-based models typically relies on the calibration of a number of parameters ($~D,~v$ in the example above), which are critical to the behavior of solutions, hence also to the physical realism of the relevant model. Moreover, these parameters are manifestly application dependent, meaning that any sufficiently general technique for constructing data-driven surrogate models for physical systems must be amenable to parametric variability.
This paper develops a data-driven method of constructing reduced-order models (ROMs)---low-dimensional dynamical systems for approximating the dominant dynamics of a large-scale physical model---which seamlessly encodes parametric dependencies while simultaneously preserving other structural features of interest.

The development of parametric ROMs is straightforward and well understood in the case of intrusive, projection-based methods \cite{Benner2015,benner2017model,Grepl05,RozHP08,Veroy05,Veroy03}.
Given a numerical simulator---termed the full-order model (FOM) due to it usually having a large number of degrees of freedom---for a physical process, intrusive methods define a ROM through Galerkin projection onto a low-dimensional manifold which, ideally, captures the dominant state behavior the system.
In this setting, any parametric dependence appearing in the FOM is automatically inherited by the resulting ROM.
However, intrusive methods require direct manipulation of the FOM equations as well as access to the underlying operators in the FOM source code. This may not always be possible, for instance when using proprietary or complex legacy codes whose implementations are inaccessible or not amenable to modifications. Even when codes are available, the direct manipulations required by intrusive model reduction can be highly non-trivial and error prone.

In response to these challenges, many non-intrusive model reduction strategies have recently been developed and successfully applied to a variety of large-scale systems. In contrast to generating ROM equations through the direct projection of FOM equations, non-intrusive model reduction techniques typically rely on the use of a pre-specified model form, along with snapshot data of FOM solutions, to infer a ROM which can be simulated without access to FOM code.
Several approaches, including as dynamic mode decomposition (DMD)~\cite{SCHMID_2010,schmid2022dmd} and operator inference (OpInf)~\cite{Ghattas_Willcox_2021,kramer2024opinfsurvey,PEHERSTORFER2016196}, define a convex linear least-squares regression to infer ROM components. In their standard (non-parametric) forms, these approaches tend to ``average out'' the effects of parameter dependence. To better accommodate parametric problems, extensions have been developed to DMD \cite{DUAN2024highorderPDMD,huhn2023pDMD,sayadi2015pDMD} and OpInf \cite{mcquarrie2023popinf,Yildiz:2024} that build known parametric structure into the pre-specified model form.
Other strategies use more general regression techniques, such as Gaussian process regression \cite{guo2018gpstructural,guo2019pgps} or deep learning \cite{bhattacharya2021romwithnets,franco2022deepppdes,GAO2020132614}.

An additional challenge in model reduction generally---and one that can be especially detrimental to data-driven methods in particular if left unaddressed---is to ensure that a ROM possesses certain desired properties of the dynamics being approximated. For instance, this paper focuses on (canonical) Hamiltonian systems, which feature a symplectic structure and conserve the total energy of solutions \cite{Marsden_BOOK_1998}. Several strategies exist to guarantee such properties in surrogates learned from data (see, e.g., \cite{bertalan2019hamiltoniandata,greydanus2019hamiltonian,JIN2020166}); in particular, the OpInf framework has enjoyed several recent extensions specifically catered to gradient flows and Hamiltonian systems \cite{geng2025portHamiltonianOpInf,geng2024gradopinf,GRUBER2023116334,Sharma:2024,Sharma:2024_2,sharma2022hamiltonian}. The OpInf formulations that impose structural constraints also tends to negate the need for regularization techniques to stabilize the dynamics of the resulting ROMs \cite{mcquarrie2021regopinf,mcquarrie2023popinf,qian2022pdeopinf}.
However, enforcing structure also tends to increase the complexity of the associated inference procedure mathematically, computationally, and in terms of the difficulty of implementation.
Each of these issues are further compounded by the presence of parametric variations.

The purpose of the present work is to develop a simple and general way to incorporate parametric dependence into the OpInf framework and, simultaneously, the preservation of structural properties in the resulting ROM.
The following simple observation is key: when a matrix operator $~A(~\mu):\mathbb{R}^N\to \mathbb{R}^N$ depends linearly on a known function $~\mu' = ~\theta(~\mu) \in \mathbb{R}^{p'}$ of the parameter vector $~\mu\in\mathbb{R}^p$, then $~A(~\mu) = \tens{T}~\mu'$ can be written as the contraction of a \textit{constant} tensor $\tens{T}\in\mathbb{R}^{N\times N\times p'}$ against the transformed parameters $~\mu'\in\mathbb{R}^{p'}$.  Said differently, the adjoint relationship $A\mapsto (B\mapsto C) \cong (A\otimes B)\mapsto C$ between continuous linear mappings $\mapsto$ of vector spaces and the tensor product $\otimes$ yields a description of the ``affine'' parametric dependence in $~A(~\mu)$ in terms of the linear action of a constant tensor $\tens{T}$ on $~\mu'$.  This notion is general enough to encompass a wide range of parametric behavior and will be shown to enable an OpInf learning problem which does not rely on local approximation techniques such as Taylor expansion~\cite{Farcas2023ParametricNR} or linear interpolation~\cite{PEHERSTORFER2016196}.
This approach builds on previous OpInf methods for affine parametric system~\cite{mcquarrie2023popinf,YGBK2021} while also enabling a novel extension to parametric Hamiltonian systems, in which the preservation of certain structure information is critical to stability and predictive accuracy. The extension is natural given the mathematical concision and streamlined implementation brought about by the tensorized approach.
To summarize, the contributions of this article are twofold: first, a tensor-based approach to parametric OpInf which is convex, mathematically concise, simple to implement, and recovers previous work as a special case; and second, an extension of parametric OpInf methodology to systems with Hamiltonian structure, leading to non-intrusive parametric ROMs which guarantee symplecticity and the production of conservative Hamiltonian dynamics.

The remainder of the work is structured as follows. \Cref{sec:generic} presents the proposed tensor parametric approach in the structure-agnostic case by defining a data-driven tensor inference problem, expressing its closed form solution, analyzing uniqueness conditions, and detailing concise numerical implementations.
\Cref{sec:Hamiltonian} discusses Hamiltonian systems and extends the tensor parametric inference to operators with symmetry appropriate for reduced-order models of Hamiltonian dynamics.
\Cref{sec:numerics} presents numerical results for parametric heat and wave equations to test the approaches of \Cref{sec:generic} and \Cref{sec:Hamiltonian}, respectively, demonstrating the performance of the proposed method.
Finally, \Cref{sec:conclusion} offers some concluding remarks and avenues for future work.

 \section{Tensor parametric operator inference}\label{sec:generic}
This section focuses on systems of ordinary differential equations (ODEs) with affine parametric dependence but without symmetry or conservation properties. \Cref{sec:affine-setting} defines the problem setting and introduces tensorized forms of both the original and approximate reduced dynamics. In \Cref{sec:tpopinf-unconstrained}, a strategy is presented and analyzed for learning reduced tensorized parametric dynamics from state data. \Cref{sec:heat-equation} then outlines an example where the methodology can be applied, with numerical results deferred to \Cref{sec:numerics}.

\subsection{Affine parametric linear systems} \label{sec:affine-setting}
Consider a linear parametric system of ODEs in which the dynamics depends affinely (i.e., generalized linearly) on a vector $~\mu\in\mathbb{R}^{p}$ of parameters:
\begin{align}
    \label{eq:ODE}
    ~M\dot{~q}
    &= ~A(~\mu)~q,
    &
    ~q(0,~\mu)
    &= ~q_0(~\mu),
    &
    ~A(~\mu)
    &= \sum_{x=1}^{p'} \theta_x(~\mu)~A_x,
\end{align}
where $~q=~q(t,~\mu)\in\mathbb{R}^N$ is the ODE state vector, $~q_0(~\mu)\in\mathbb{R}^N$ is a given initial condition, $~M\in\mathbb{R}^{N\times N}$ is symmetric positive definite (e.g., a mass matrix in a finite element method), and $~A:\mathbb{R}^{p}\to\mathbb{R}^{N\times N}$ is a matrix-valued function defined by the (possibly nonlinear) scalar-valued functions $\theta_1,\ldots,\theta_{p'}:\mathbb{R}^{p}\to\mathbb{R}$ and constant matrices $~A_1,\ldots,~A_{p'}\in\mathbb{R}^{N\times N}$. Such systems frequently arise from a method of lines semi-discretization of time-dependent PDEs (see, e.g., \cite{YUAN1999375}).
As the discrete dimension $N$ is typically very large, \cref{eq:ODE} is called the \emph{full-order model} (FOM), which serves as the best available discretization of the dynamics under consideration.

When the matrices $~A_1,\ldots,~A_{p'}$ can be accessed intrusively, it is straightforward to construct a computationally efficient \emph{reduced-order model} (ROM) for~\cref{eq:ODE} through projection. This paper considers projection onto $~M$-orthonormal linear reduced bases $~U\in\mathbb{R}^{N\times r} (r\ll N)$, though nonlinear approximations such as in \cite{barnett2022quadmanifold,geelen2024learning,geelen2023quadmanifold,jain2017quadratic,schwerdtner2024greedy} may also be considered with minor alterations to the approach.
More precisely, given a trial space basis $~U \in \mathbb R^{N \times r}$ satisfying $~U\trp~M~U = ~I$ and a low-rank  approximation $\tilde{~q} \coloneqq ~U\hat{~q} \approx ~q$ defined in terms of an unknown coefficient vector $\hat{~q} = \hat{~q}(t,~\mu) \in \mathbb R^r$, Galerkin projection of the FOM~\cref{eq:ODE} onto the column space of $~U$ gives the following reduced ODE system \cite{Benner2015}:
\begin{align}\label{eq:rODE}
    \dot{\hat{~q}}
    &= ~U\trp~A(~\mu)~U\hat{~q}
= \sum_{x=1}^{p'}\theta_x(~\mu)\hat{~A}_{x}\hat{~q},
    \qquad
    \hat{~q}(0,~\mu) = ~U\trp~M ~q_0(~\mu),
\end{align}
where $\hat{~A}_x \coloneqq ~U\trp~A_x~U\in\mathbb{R}^{r\times r}$ for each $x=1,\ldots,p'$.
The system~\cref{eq:rODE} is a low-dimensional approximation to the FOM~\cref{eq:ODE} which, importantly, exhibits the same affine-parametric structure as the FOM. Provided the trial space basis $~U$ is well chosen (e.g., $L^2$-optimal over some data range), the ROM~\cref{eq:rODE} can accurately recover the dominant behavior of the original system at a significantly reduced computational cost, since the matrices $\hat{~A}_1,\ldots,\hat{~A}_{p'}$ can be pre-computed once and reused for multiple values of the parameter $~\mu$. However, computing each $\hat{~A}_x$ involves direct matrix-matrix multiplications with the full-order object $~A_x$, an intrusive procedure that requires access to the code implementing the FOM~\cref{eq:ODE}.
The goal of parametric OpInf is to non-intrusively construct a ROM similar to~\cref{eq:rODE} by inferring low-dimensional operators from observations of the state $~q$.

\begin{remark}[Basis orthonormality]
The reduced-order matrix $\hat{~A}(~\mu) = ~U\trp~A(~\mu)~U$ is symmetric (or skew symmetric) whenever the full-order matrix $~A(~\mu)$ is.
If a basis $~U$ is chosen which is not $~M$-orthonormal, Galerkin projection results in the slightly different ROM
\begin{align*}
    \dot{\hat{~q}}
    = (~U\trp~M~U)^{-1}~U\trp~A(~\mu)~U\hat{~q},
\end{align*}
which does not inherit (anti-)symmetry from the FOM.
This paper focuses on $~M$-orthonormal bases because the goal of \Cref{sec:Hamiltonian} is to construct ROMs which preserve system symmetry without direct access to $~A(~\mu)$.
\end{remark}

Before proceeding,  it is convenient to reformulate \cref{eq:ODE} and \cref{eq:rODE} in a way which exposes their tensorial structure.  Let $~\mu'= ~\theta(~\mu) \coloneqq (\theta_1(~\mu),\ldots,\theta_{p'}(~\mu))\trp\in\mathbb{R}^{p'}$ denote the vector of scalar coefficients in the affine expansion of $~A(~\mu)$.
Then the FOM~\cref{eq:ODE} can be written in the compact form
\begin{align}
    \label{eq:ODE-tensorized}
    ~M\dot{~q}
    = \lr{\tens{T}~\mu'}~q,
    \qquad
    ~q(0,~\mu)
    = ~q_0(~\mu),
\end{align}
where $\tens{T}\in\mathbb{R}^{N \times N \times p'}$ is the order-3 tensor satisfying $\tens{T}~\mu' = ~A(~\mu)\in\mathbb{R}^{N\times N}$, obtained from the matrix-valued function $~A(~\mu)$ by tensor-Hom adjunction (i.e., adjointness).
In a similar fashion, the ROM~\cref{eq:rODE} can be expressed as
\begin{subequations}
\label{eq:rODE-tensorized}
\begin{align}
    \dot{\hat{~q}}
    = \lr{\hat{\tens{T}}~\mu'}\hat{~q},
    \qquad
    \hat{~q}(0,~\mu)
    = ~U\trp ~M~q_0(~\mu),
\end{align}
where $\hat{\tens{T}}\in\mathbb{R}^{r \times r \times p'}$ is a reduced order-$3$ tensor with entries
\begin{align}\label{eq:reducedT}
    \hat{\mathrm{T}}_{abx}
    = \sum_{i,j=1}^{N} U_{ia} \mathrm{T}_{ijx} U_{jb}
    = \sum_{i,j=1}^{N} \lr{~U\trp}_{ai}(~A_x)_{ij} \lr{~U}_{jb}
    = (\hat{~A}_x)_{ab},
\end{align}
\end{subequations}
where $\mathrm{T}_{ijx} = (~A_x)_{ij}$ denotes the $(i,j,x)$-th entry of $\tens{T}$, $U_{ia}$ denotes the $(i,a)$-th entry of $~U$, and $(\hat{~A}_x)_{ab}$ denotes the $(a,b)$-th entry of $\hat{~A}_x$.
Note that $\hat{\tens{T}}$ can be viewed as the projection of the full-order tensor $\tens{T}$ or, equivalently, as the tensor-Hom adjoint of the projected matrix-valued function $\hat{~A}(~\mu) \coloneqq \sum_{x=1}^{p'}\theta_x(~\mu)\hat{~A}_x$. The advantage of the expression \cref{eq:rODE-tensorized} lies primarily in its compactness: because the explicit tensor contraction $\hat{\tens{T}}~\mu'$ exposes higher-level structure and removes the need for bookkeeping in terms of the matrices $\hat{~A}_1,...,\hat{~A}_{p'}$, additional considerations such as operator symmetries can be more easily included in data-driven ROM construction. Besides leading to a streamlined implementation of the method in \cite{mcquarrie2023popinf,YGBK2021}, this approach also enables simplified proofs of established results (e.g.,~\Cref{cor:uniqueness}) and facilitates an extension which preserves Hamiltonian structure information (c.f.~\Cref{sec:Hamiltonian}).

\subsection{Tensor parametric operator inference}\label{sec:tpopinf-unconstrained}
The ROM~\cref{eq:rODE} is fully and uniquely determined by the entries of the reduced-order matrices $\hat{~A}_1,\ldots,\hat{~A}_{p'}$, which in turn depend on the full-order matrices $~A_1,\ldots,~A_{p'}$. When these matrices are not available for computation due to, e.g., proprietary or legacy software implementations, an alternative to direct projection is to choose the matrices that minimize the residual of \cref{eq:rODE} with respect to a collection of training states. This is the Operator Inference (OpInf) strategy introduced in~\cite{PEHERSTORFER2016196} and extended to affine-parametric systems in~\cite{mcquarrie2023popinf,YGBK2021}.
This section develops a corresponding OpInf procedure for the tensorized ROM~\cref{eq:rODE-tensorized} and presents some analysis of its basic properties.

Suppose access is granted to fixed-time snapshots of the full-order state---solutions to the FOM~\cref{eq:ODE}---at $N_s$ training parameter values $~\mu_1,\ldots,~\mu_{N_s}\in\mathbb{R}^{p}$ and $N_t$ time instances $t_1,\ldots,t_{N_t}\in\mathbb{R}$, organized into the state snapshot matrices
\begin{align*}
    ~Q_s
    = \left[\begin{array}{ccc}
        ~q(t_1,~\mu_s)
        & \cdots &
        ~q(t_{N_t},~\mu_s)
    \end{array}\right]\in\mathbb{R}^{N \times N_t},
    \quad
    s = 1, \ldots, N_s.
\end{align*}
Given an $~M$-orthonormal basis matrix $~U\in\mathbb{R}^{N\times r}$, define the reduced state snapshot matrices $\hat{~Q}_s \coloneqq ~U\trp~M~Q_s\in\mathbb{R}^{r\times N_t}$, $s=1,\ldots,N_s,$ as well as corresponding reduced state time derivative matrices $\dot{\hat{~Q}}_s = ~U\trp~M D_t(~Q_s)\in\mathbb{R}^{r\times N_t}$, which can be estimated, e.g., via finite difference approximation.
Specifically, the $\alpha$-th column of $\dot{\hat{~Q}}_s$ is an estimate for $\frac{d}{dt}~U\trp~M~q(t,~\mu_s)|_{t=t_\alpha}$.
A reduced tensor $\bar{\tens{T}}$ which approximates the intrusively constructed $\hat{\tens{T}}$ in~\cref{eq:rODE-tensorized} can then be inferred by solving the convex minimization problem
\begin{align}
    \label{eq:genericminprob}
    \argmin_{\bar{\tens{T}}}\frac{1}{2}\sum_{s=1}^{N_s}\nn{\dot{\hat{~Q}}_s - \lr{\bar{\tens{T}}~\mu_s'}\hat{~Q}_s }^2,
\end{align}
where $\|~F\|^2=\sum_{ij}F_{ij}F_{ij}$ denotes the Frobenius matrix norm.

Conveniently, convexity implies that the inference of $\bar{\tens{T}}$ in~\cref{eq:genericminprob} is equivalent to a solving a particular linear system.
The precise statement of this result is simplified with the following notation: for an order-$n$ tensor $\tens{B}\in\mathbb{R}^{N_1\times N_2\times...\times N_n}$, let $\cvec_{ij}\,\tens{B}, \rvec_{ij}\,\tens{B}\in \mathbb{R}^{N_1\times...\times N_iN_j\times ...\times N_n}$ denote the partial vectorizations that decrement the tensor degree of $\tens{B}$ by unrolling the ordered indices $i<j$ ``column-wise'' (in the case of $\cvec_{ij}$) or ``row-wise'' (in the case of $\rvec_{ij}$) into a single index located at the $i$-th position. More precisely, indexing the $m$-th dimension with $k_m$,
\begin{subequations}
\label{eq:vectorize}
\begin{align}
&\lr{\cvec_{ij}\,\tens{B}}_{k_1,\ldots,k_{i-1},(k_j - 1)N_i + k_i,k_{i+1},\ldots,k_{j-1},k_{j+1},\ldots,k_n}
    = \mathrm{B}_{k_1,\ldots,k_{i},\ldots,k_{j},\ldots,k_n}, \\
    &\lr{\rvec_{ij}\,\tens{B}}_{k_1,\ldots,k_{i-1},(k_i - 1)N_j + k_j,k_{i+1},\ldots,k_{j-1},k_{j+1},\ldots,k_n}
    = \mathrm{B}_{k_1,\ldots,k_{i},\ldots,k_{j},\ldots,k_n}.
\end{align}
\end{subequations}
Additionally, let $\cmat_{ij}$ and $\rmat_{ij}$ be the natural inverses of $\cvec_{ij}$ and $\rvec_{ij}$, i.e., $$\cmat_{ij}\lr{\cvec_{ij}\,\tens{B}} = \rmat_{ij}\lr{\rvec_{ij}\,\tens{B}} = \tens{B},$$ and let $\otimes$ denote the outer product: for an order-$m$ tensor $\tens{A}\in\mathbb{R}^{M_1\times\cdots\times M_{m}}$ and an order-$n$ tensor $\tens{B}\in\mathbb{R}^{N_1\times\cdots\times N_{n}}$, $\tens{A}\otimes\tens{B}\in\mathbb{R}^{M_1\times\cdots\times M_{m}\times N_1\cdots\times N_{n}}$ is a tensor of order $m+n$ with entries
\begin{align*}
    (\tens{A}\otimes\tens{B})_{i_1,\ldots,i_m,k_1,\ldots,k_n}
    = \mathrm{A}_{i_1,\ldots,i_m}\mathrm{B}_{k_1,\ldots,k_n}.
\end{align*}
With this notation in place, the following result formulates the inference of $\bar{\tens{T}}$ in \cref{eq:genericminprob} as a system of linear equations.

\begin{theorem}\label{thm:unstructured}
Let $\hat{~Y}_s, \hat{~Z}_s \in \mathbb{R}^{r \times N_t}$ and $~\nu_s\in\mathbb{R}^{p'}$ for $s=1,\ldots,N_s$. The tensor $\bar{\tens{T}}\in\mathbb{R}^{r\times r \times p'}$ minimizes the Lagrangian
\begin{align}
    \label{eq:lagrangian-unstructured}
    L\lr{\bar{\tens{T}}} \coloneqq \frac{1}{2}\sum_{s=1}^{N_s}
    \nn{\hat{~Z}_s - \lr{\bar{\tens{T}}~\nu_s}\hat{~Y}_s}^2
\end{align}
if and only if $\bar{~T} \coloneqq \cvec_{23}\,\bar{\tens{T}}\in\mathbb{R}^{r\times rp'}$ satisfies the linear system
\begin{align}
    \label{eq:normal-equations}
    \hat{~B}\,\bar{~T}\trp
    = \hat{~C}\trp,
\end{align}
where $\hat{~B}\in\mathbb{R}^{rp'\times rp'}$ and $\hat{~C}\in\mathbb{R}^{r\times rp'}$ denote the vectorizations
\begin{align*}
    \hat{~B} &\coloneqq
    \rvec_{12}\cvec_{34}
    \lr{\sum_{s=1}^{N_s}~\nu_s\otimes\hat{~Y}_s\hat{~Y}_s\trp\otimes~\nu_s},
    \quad
    \hat{~C} \coloneqq
    \cvec_{23}\,\lr{\sum_{s=1}^{N_s}\hat{~Z}_s\hat{~Y}_s\trp\otimes~\nu_s}.
\end{align*}
\end{theorem}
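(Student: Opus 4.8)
The plan is to treat \cref{eq:lagrangian-unstructured} as an ordinary convex quadratic in the entries of $\bar{\tens{T}}$ and extract its (necessary and sufficient) first-order optimality condition, then recognize that condition as \cref{eq:normal-equations} after suitable partial vectorizations. First I would expand the Frobenius norm using $\nn{~F}^2 = \operatorname{tr}(~F~F\trp)$, writing $L(\bar{\tens{T}}) = \tfrac12\sum_s \operatorname{tr}\big[(\hat{~Z}_s - (\bar{\tens{T}}~\nu_s)\hat{~Y}_s)(\hat{~Z}_s - (\bar{\tens{T}}~\nu_s)\hat{~Y}_s)\trp\big]$. Since $L$ is a nonnegative quadratic form in $\bar{\tens{T}}$, it is convex, so $\bar{\tens{T}}$ is a minimizer if and only if the gradient (equivalently, the directional derivative in every direction $\tens{H}\in\mathbb{R}^{r\times r\times p'}$) vanishes. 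Computing the directional derivative gives
\begin{align*}
    \left.\frac{d}{d\epsilon}\right|_{\epsilon=0} L(\bar{\tens{T}} + \epsilon\tens{H})
    = -\sum_{s=1}^{N_s}\operatorname{tr}\!\left[\big(\hat{~Z}_s - (\bar{\tens{T}}~\nu_s)\hat{~Y}_s\big)\hat{~Y}_s\trp(\tens{H}~\nu_s)\trp\right],
\end{align*}
and setting this to zero for all $\tens{H}$ yields the stationarity condition
$\sum_s (\tens{H}~\nu_s)\big[(\bar{\tens{T}}~\nu_s)\hat{~Y}_s\hat{~Y}_s\trp - \hat{~Z}_s\hat{~Y}_s\trp\big]\trp$ pairs to zero, i.e.
\begin{align*}
    \sum_{s=1}^{N_s} (\bar{\tens{T}}~\nu_s)\,\hat{~Y}_s\hat{~Y}_s\trp\otimes~\nu_s
    = \sum_{s=1}^{N_s} \hat{~Z}_s\hat{~Y}_s\trp\otimes~\nu_s,
\end{align*}
interpreting both sides as order-$3$ tensors in $\mathbb{R}^{r\times r\times p'}$ (the free index structure coming from the arbitrary $\tens{H}$).

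The second step is bookkeeping: rewrite the contraction $\bar{\tens{T}}~\nu_s$ and the left-hand tensor product in terms of the flattened matrix $\bar{~T} = \cvec_{23}\,\bar{\tens{T}}$. The key identity is that contracting $\bar{\tens{T}}$ against $~\nu_s$ on its third slot and then taking an outer product with $~\nu_s$ on a new third slot is, after the composite flattening $\rvec_{12}\cvec_{34}$ on the left factor and $\cvec_{23}$ on the output, exactly the matrix product $\bar{~T}\,\hat{~B}\trp$ (equivalently $\hat{~B}\,\bar{~T}\trp$ on the transposed equation), where $\hat{~B} = \rvec_{12}\cvec_{34}\big(\sum_s ~\nu_s\otimes\hat{~Y}_s\hat{~Y}_s\trp\otimes~\nu_s\big)$; similarly the right-hand side flattens via $\cvec_{23}$ to $\hat{~C} = \cvec_{23}\big(\sum_s \hat{~Z}_s\hat{~Y}_s\trp\otimes~\nu_s\big)$. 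I would verify these index correspondences by matching entries on both sides using the definitions in \cref{eq:vectorize}: the $(a, (x-1)r + b)$-entry of $\bar{~T}$ is $\bar{\mathrm T}_{abx}$, and one checks that $(\bar{\tens T}~\nu_s \hat{~Y}_s\hat{~Y}_s\trp)_{ac}\,(\nu_s)_y$ summed over $s$ reshapes to $\bar{~T}\,\hat{~B}\trp$ under the stated flattenings, and likewise for $\hat{~C}$. Transposing yields \cref{eq:normal-equations}.

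The main obstacle is purely notational rather than conceptual: getting the four partial-vectorization operators ($\rvec_{12}$, $\cvec_{34}$ on $\hat{~B}$; $\cvec_{23}$ on $\bar{\tens{T}}$ and on $\hat{~C}$) to line up so that the abstract stationarity condition becomes the stated matrix equation with the transposes in the right places. The cleanest route is to pick a single pair of dummy indices $(a,b,x)$ for the rows/columns of $\bar{~T}$ and $(c,b',y)$ for the contraction/output structure, write out the scalar equation
$\sum_{s}\sum_{b,x}\bar{\mathrm T}_{bcx}\,(\nu_s)_x\,(\hat{~Y}_s\hat{~Y}_s\trp)_{bb'}\,(\nu_s)_y \;=\; \sum_s (\hat{~Z}_s\hat{~Y}_s\trp)_{cb'}(\nu_s)_y$ for all $c, b', y$, and then observe that the left side is $\big(\bar{~T}\,\hat{~B}\trp\big)_{c,(y-1)r + b'}$ and the right side is $\hat{~C}_{c,(y-1)r+b'}$ precisely because of how $\rvec_{12}\cvec_{34}$ orders the index $(y-1)r+b'$ against the column index $(x-1)r+b$ of $\bar{~T}$. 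I would present this entry-wise verification compactly and remark that convexity already guarantees the ``if'' direction (any solution of the linear system is a global minimizer) so no separate second-order check is needed.
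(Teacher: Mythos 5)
Your proposal is correct and follows essentially the same route as the paper: both derive the first-order stationarity condition of the convex quadratic (yielding the tensor equation $\sum_s (\bar{\tens{T}}~\nu_s)\hat{~Y}_s\hat{~Y}_s\trp\otimes~\nu_s = \sum_s \hat{~Z}_s\hat{~Y}_s\trp\otimes~\nu_s$) and then flatten it via the partial vectorizations to obtain \cref{eq:normal-equations}, with convexity supplying the equivalence. The only cosmetic difference is that you carry out the index bookkeeping entry-wise and absorb the symmetry of $\hat{~B}$ into your flattening, whereas the paper routes the same computation through its Frobenius inner-product and contraction--vectorization lemmata in \Cref{appendix:tensoralgebra} and invokes $\hat{~B}\trp=\hat{~B}$ explicitly at the end.
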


\begin{proof}
The proof is a direct calculation of the first-order optimality conditions for the Lagrangian~\cref{eq:lagrangian-unstructured}, appealing to a few minor algebraic results detailed in \Cref{appendix:tensoralgebra}.
Differentiating $L$ with respect to $\bar{\tens{T}}$ and applying the definition of the gradient, it follows that
\begin{align*}
    dL\lr{\bar{\tens{T}}} &= \sum_{s=1}^{N_s}\IP{\lr{d{\bar{\tens{T}}}\,~\nu_s}\hat{~Y}_s}{\hat{~Z}_s - \lr{\bar{\tens{T}}~\nu_s}\hat{~Y}_s} \\
    &= \IP{d\bar{\tens{T}}}{\sum_{s=1}^{N_s}\left[\hat{~Z}_s - \lr{\bar{\tens{T}}~\nu_s}\hat{~Y}_s\right]\hat{~Y}_s\trp\otimes~\nu_s} = \IP{d\bar{\tens{T}}}{\nabla L\lr{\bar{\tens{T}}}},
\end{align*}
where $\IP{\cdot}{\cdot}$ denotes the Frobenius inner product and in which the standard properties listed in \Cref{thm:frobeniusproperties} have been used. Hence,
the stationarity condition $\nabla L\lr{\bar{\tens{T}}}=~0$ implies the tensorial equation
\begin{align}\label{eq:tensoreq}
    \sum_{s=1}^{N_s}\lr{\bar{\tens{T}}~\nu_s}\hat{~Y}_s\hat{~Y}_s\trp\otimes~\nu_s
    = \sum_{s=1}^{N_s}\hat{~Z}_s\hat{~Y}_s\trp\otimes~\nu_s.
\end{align}
From~\Cref{thm:contractionproduct}, the left-hand side can be expressed as the tensor contraction
\begin{align*}
    \sum_{s=1}^{N_s}
    \lr{\bar{\tens{T}}~\nu_s}\hat{~Y}_s\hat{~Y}_s\trp\otimes~\nu_s
    = \bar{\tens{T}}:\sum_{s=1}^{N_s}
    ~\nu_s\otimes\hat{~Y}_s\hat{~Y}_s\trp\otimes~\nu_s,
\end{align*}
with the convention that $\tens{X}_1:\tens{X}_2$ operates over the last two indices of $\tens{X}_1$ and the first two indices of $\tens{X}_2$ in reverse order, i.e., $\tens{X}_1:\tens{X}_2 = \sum_{i_{k-1},i_k}\lr{\tens{X}_1}_{...i_{k-1}i_k}\lr{\tens{X}_2}_{i_ki_{k-1}...}$. By \Cref{thm:contractionvectorization}, vectorizing the contraction column-wise in $\bar{\tens{T}}$ and similarly vectorizing the last two indices of the result transforms the tensorial equation~\cref{eq:tensoreq} into the linear system
\begin{align*}
    \lr{\cvec_{23}\,\bar{\tens{T}}}\,\rvec_{12}\cvec_{34}
    \lr{\sum_{s=1}^{N_s} ~\nu_s\otimes\hat{~Y}_s\hat{~Y}_s\trp\otimes~\nu_s}
    = \cvec_{23}\lr{\sum_{s=1}^{N_s}\hat{~Z}_s\hat{~Y}_s\trp\otimes~\nu_s},
\end{align*}
which, using the definitions in the statement, is the system $\bar{~T}\hat{~B} = \hat{~C}$. Transposition and the symmetry relationship $\hat{~B}\trp = \hat{~B}$ then yield the linear system~\cref{eq:normal-equations}.
\end{proof}

\Cref{thm:unstructured} suggests a straightforward numerical method for solving the tensor parametric OpInf problem~\cref{eq:genericminprob}: use the reduced state snapshot matrices to form the linear system~\cref{eq:normal-equations}, solve for $\bar{~T}$, then set $\bar{\tens{T}} = \cmat_{23}\,\bar{~T}$. The resulting non-intrusively obtained OpInf ROM for the approximate state $\tilde{~q} = ~U\hat{~q}$ is given by
\begin{align}
    \label{eq:rODE-opinf}
    \dot{\hat{~q}}
    = \lr{\bar{\tens{T}}~\mu'}\hat{~q},
    \qquad \hat{~q}(0,~\mu) = ~U\trp~M~q_0(~\mu).
\end{align}
This inference procedure can be accomplished with only a few lines of Python code (see \Cref{alg:normal_eqns_unstructured}) and
can be carried out even when data for only $N_s=1$ parameter sample are available.
In such data-scarce settings, however, the uniqueness of the solution is not guaranteed; conditions for uniqueness are established in \Cref{cor:uniqueness}.

\begin{algorithm}[t]
\caption{A NumPy/SciPy implementation for inferring a tensor $\bar{\tens{T}}$ using the normal equations \cref{eq:normal-equations} from  \Cref{thm:unstructured}.}
\label{alg:normal_eqns_unstructured}
\vspace{.25cm}
\begin{minted}{python}
import numpy as np
import scipy.linalg as la

def infer_Tbar_via_normal_eqns(nus, Ys, Zs):
    p, Ns = nus.shape
    r, Nt, Ns = Ys.shape  # or Zs.shape

    Btsr = np.einsum("xs,ias,jas,ys->xijy", nus, Ys, Ys, nus)
    Bhat = Btsr.transpose(0, 1, 3, 2).reshape((r*p, r*p), order="C")

    Ctsr = np.einsum("ias,jas,xs->ijx", Zs, Ys, nus)
    Chat = Ctsr.reshape((r, r*p), order="F")

    Tmat = la.solve(Bhat, Chat.T, assume_a="sym").T
    return Tmat.reshape((r, r, p), order="F")
\end{minted}
\vspace{.25cm}
\end{algorithm}

The linear system~\cref{eq:normal-equations} can be interpreted as the (vectorized) normal equations for the least-squares problem \cref{eq:genericminprob}, hence \Cref{alg:normal_eqns_unstructured} may be disadvantageous if $\hat{~B}$ is poorly conditioned.
The following result provides an alternative inference procedure with improved conditioning.

\begin{proposition}\label{prop:tsrlstsqprop}
Let $L:\mathbb{R}^{r\times r\times p'}\to\mathbb{R}$ be the Lagrangian function~\cref{eq:lagrangian-unstructured} with argument $\bar{\tens{T}}\in\mathbb{R}^{r\times r \times p'}$, defined by the matrices $\hat{~Y}_s, \hat{~Z}_s \in\mathbb{R}^{r\times N_t}$ and vectors $~\nu_{s}\in\mathbb{R}^{p'}$, $s=1,\ldots,N_s$.
Consider the tensors $\hat{\tens{Y}},\hat{\tens{Z}}\in\mathbb{R}^{r\times N_t\times N_s}$ which result from stacking these matrices
along the parameter index, i.e., $\hat{\mathrm{Y}}_{ijk} = (\hat{~Y}_{k})_{ij}$ and $\hat{\mathrm{Z}}_{ijk} = (\hat{~Z}_{k})_{ij}$.
Writing $\bar{~O} = \cvec_{23}\bar{\tens{T}}\in\mathbb{R}^{r\times rp'}$, this Lagrangian has the equivalent expression
\begin{align}
    \label{eq:least-squares-formulation}
    L(\bar{\tens{T}})
    = \ell\lr{\bar{~O}}
    \coloneqq \nn{~D\bar{~O}\trp-~R\trp}^2,
\end{align}
where $~R = \cvec_{23}\,\hat{\tens{Z}}\in\mathbb{R}^{r\times N_tN_s}$ and $~D = \lr{\cvec_{13}\cvec_{24}\,\tens{K}}\trp\in\mathbb{R}^{N_tN_s\times rp'}$ is defined in terms of the order-4 tensor $\tens{K}\in\mathbb{R}^{r\times N_t\times p'\times N_s}$ with components $\mathrm{K}_{b\alpha xs} = (\hat{~Y}_s)_{b\alpha}(~\nu_s)_x$.
Moreover, the problem of minimizing the vectorized Lagrangian $\ell$ decouples over the rows of $\bar{~O}$.
\begin{proof}
This can be accomplished by vectorizing the parameter index in $L$ (which was previously summed) and is easiest to express with explicit index manipulations.  To that end, define the index ranges
\begin{align*}
    1&\leq a,b\leq r,
    &
    1&\leq \alpha\leq N_t,
    &
    1&\leq x\leq p',
    \\
    1&\leq s\leq N_s,
    &
    1&\leq I\leq N_tN_s,
    &
    1&\leq Y\leq rp'.
\end{align*}
Adopting the Einstein convention where repeated sub- and super-indices imply a summation and letting $\Sigma$ denote the sum over all remaining free indices, the Lagrangian is expressible as
\begin{align*}
    L\lr{\bar{\tens{T}}} &= \sum\left[(\hat{~Z}_s)_{a\alpha} - \bar{\mathrm{T}}_{abx}\lr{~\nu_s}^x(\hat{~Y}_s)^b_\alpha\right]^2 = \sum\left[ \hat{\mathrm{Z}}_{a\alpha s} - \bar{\mathrm{T}}_{abx}\nu^x_s\hat{\mathrm{Y}}^b_{\alpha s}\right]^2 \\
    &= \sum \left[ \lr{\cvec_{23}\,\hat{\tens{Z}}}_{aI} - \lr{\cvec_{23}\,\bar{\tens{T}}}_{aY} \lr{\cvec_{13}\cvec_{24}\,\tens{K}}^Y_I \right]^2 = \ell\lr{\bar{~O}},
\end{align*}
where the first equality is the definition of the Lagrangian, the second uses the definition of the tensors $\hat{\tens{Y}},\hat{\tens{Z}}$, the third vectorizes the double contraction and its result while applying the definition of $\tens{K}$, and the fourth transposes the expression under the norm.
Applying the definitions of $~D$, $\bar{~O}$, and $~R$, it follows that
\[\ell\lr{\bar{~O}} = \nn{~R - \bar{~O}~D\trp}^2 = \nn{~D\bar{~O}\trp-~R\trp}^2,\]
as desired.
Moreover, the minimization problem decouples over the rows $\bar{~o}^a\in\mathbb{R}^{rp'}$ of $\bar{~O}$ and $~r^a\in\mathbb{R}^{N_t N_s}$ of $~R$, $a = 1,\ldots, r$, into $r$ independent least-squares problems, i.e.,
\begin{align}\label{eq:vectorizedpOpInf-unstructured}
    \argmin_{\bar{~O}}\,\ell\lr{\bar{~O}} = \left\{\argmin_{\bar{~o}^a} \, \nn{~D\bar{~o}^a - ~r^a}^2\right\}_{a=1}^r.
\end{align}
where each sub-problem has $N_t N_s$ data points and $rp'$ unknowns.
\end{proof}
\end{proposition}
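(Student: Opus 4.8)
The plan is to reduce the proposition to careful index bookkeeping: expand the Frobenius norm in \cref{eq:lagrangian-unstructured} entrywise, absorb the sum over the training index $s$ into stacked tensors, repackage the parameter weights into $\tens{K}$, and then check that the three vectorizations in the statement introduce mutually consistent compound indices, so that $L$ collapses to the squared norm of a single matrix residual. First I would write, for each fixed $s$, the $(a,\alpha)$-entry of $\hat{~Z}_s - \lr{\bar{\tens{T}}~\nu_s}\hat{~Y}_s$ as $(\hat{~Z}_s)_{a\alpha} - \sum_{b,x}\bar{\mathrm{T}}_{abx}(~\nu_s)_x(\hat{~Y}_s)_{b\alpha}$, so that $L\lr{\bar{\tens{T}}}$ is a sum of squares over the free indices $a$, $\alpha$, $s$. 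Introducing $\hat{\mathrm{Y}}_{b\alpha s} = (\hat{~Y}_s)_{b\alpha}$, $\hat{\mathrm{Z}}_{a\alpha s} = (\hat{~Z}_s)_{a\alpha}$, and $\mathrm{K}_{b\alpha xs} = (\hat{~Y}_s)_{b\alpha}(~\nu_s)_x$ then turns the per-sample sum into a single sum over $(a,\alpha,s)$ with summand $\hat{\mathrm{Z}}_{a\alpha s} - \sum_{b,x}\bar{\mathrm{T}}_{abx}\mathrm{K}_{b\alpha xs}$.

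The core step is to confirm that the vectorizations line up. I would check that $\cvec_{23}\,\bar{\tens{T}}$ sends the index pair $(b,x)$ to $Y = (x-1)r + b$ while $\cvec_{13}$, applied to the $(b,x)$-slot of (the partially vectorized) $\tens{K}$, produces the same $Y$; likewise that $\cvec_{23}\,\hat{\tens{Z}}$ sends $(\alpha,s)$ to $I = (s-1)N_t + \alpha$ and that $\cvec_{24}$ sends the $(\alpha,s)$-slot of $\tens{K}$ to the same $I$. This is where care is needed and is the step I expect to be the main obstacle: one must respect the order of composition in $\cvec_{13}\cvec_{24}$ (indeed $\cvec_{24}$ must act first, since after it $\tens{K}$ has only three slots) and track which positions survive each partial vectorization, recalling that $\cvec_{ij}$ with $i<j$ places the merged index at slot $i$ and deletes slot $j$ — the only place a transpose or off-by-one index mismatch could slip in. Once the matching is verified, $\sum_{b,x}\bar{\mathrm{T}}_{abx}\mathrm{K}_{b\alpha xs} = \sum_Y \bar{O}_{aY}D_{IY} = \lr{\bar{~O}~D\trp}_{aI}$ with $\bar{~O} = \cvec_{23}\,\bar{\tens{T}}$ and $~D = \lr{\cvec_{13}\cvec_{24}\,\tens{K}}\trp$, and summing squares over $(a,I)$ gives $L\lr{\bar{\tens{T}}} = \nn{~R - \bar{~O}~D\trp}^2 = \nn{~D\bar{~O}\trp - ~R\trp}^2 = \ell\lr{\bar{~O}}$, where $~R = \cvec_{23}\,\hat{\tens{Z}}$.

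For the decoupling claim I would simply observe that the squared Frobenius norm of a matrix is the sum of the squared norms of its columns, so $\nn{~D\bar{~O}\trp - ~R\trp}^2 = \sum_{a=1}^r \nn{~D\bar{~o}^a - ~r^a}^2$, where $\bar{~o}^a$ and $~r^a$ denote the $a$-th rows of $\bar{~O}$ and $~R$ (identified with column vectors inside the norm). Since the $a$-th summand depends on $\bar{~O}$ only through $\bar{~o}^a$, minimizing $\ell$ over $\bar{~O}$ is equivalent to minimizing each summand independently, which is precisely the $r$-fold family of least-squares problems in \cref{eq:vectorizedpOpInf-unstructured}, each sharing the design matrix $~D\in\mathbb{R}^{N_tN_s\times rp'}$ and hence having $N_tN_s$ equations in $rp'$ unknowns. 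Beyond the index-matching in the middle step, everything here is routine linear algebra.
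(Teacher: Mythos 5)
Your proposal is correct and follows essentially the same route as the paper's proof: entrywise expansion of the Frobenius norm, absorption of the sample sum into the stacked tensors $\hat{\tens{Y}},\hat{\tens{Z}}$ and the tensor $\tens{K}$, verification that $\cvec_{23}$, $\cvec_{13}$, and $\cvec_{24}$ produce matching compound indices $Y=(x-1)r+b$ and $I=(s-1)N_t+\alpha$, and column-wise decoupling of the transposed residual. Your explicit check of the composition order ($\cvec_{24}$ acting before $\cvec_{13}$) is a detail the paper leaves implicit, and it is carried out correctly.
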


\Cref{prop:tsrlstsqprop}
shows that the least-squares problem~\cref{eq:least-squares-formulation} is equivalent to the system of normal equations~\cref{eq:normal-equations}. Indeed, $~D\trp~D=\hat{~B}$, which can be seen by
considering the tensor $\tens{X}\in\mathbb{R}^{r\times p\times r \times p}$ with components
\begin{align*}
    \mathrm{X}_{axby}
    = \sum_{\alpha,s}\mathrm{K}_{a\alpha xs}\mathrm{K}_{b\alpha ys}
    = \sum_{\alpha,s}(\hat{~Y}_s)_{a\alpha}(~\nu_s)_x(\hat{~Y}_s)_{b\alpha}(~\nu_s)_y
    = \sum_s (~\nu_s)_x(\hat{~Y}_s\hat{~Y}_s\trp)_{ab}(~\nu_s)_y.
\end{align*}
The matrix $\hat{~B}$ is a partial vectorization of this quantity; specifically,
\begin{align*}
    \hat{~B}
    = \rvec_{12}\cvec_{34}\lr{\mathrm{perm}_{12}\,\tens{X}}
= \cvec_{12}\cvec_{34}\,\tens{X}
    =~D\trp~D,
\end{align*}
since $\rvec_{ij}\mathrm{perm}_{ij} = \cvec_{ij}$, where $\mathrm{perm}_{ij}$ denotes a permutation of indices $i$ and $j$.
This implies that the condition number of $~D$ is the square root of the condition number of $\hat{~B}$, hence the direct minimization of~\cref{eq:least-squares-formulation} via, e.g., QR factorization of $~D$, is more numerically viable than solving~\cref{eq:normal-equations}.
Additionally, iterative approaches based on low-rank factorizations of $~D$ can be used to implicitly regularize and efficiently solve the inference problem. A na\"{i}ve implementation of this procedure using direct minimization is presented in \Cref{alg:lstsq_unstructured}.

\begin{algorithm}[t]
\caption{A NumPy implementation for inferring a tensor $\bar{\tens{T}}$ using the least-squares problem \cref{eq:least-squares-formulation} from \Cref{prop:tsrlstsqprop}.}
\label{alg:lstsq_unstructured}
\vspace{.25cm}
\begin{minted}{python}
import numpy as np

def infer_Tbar_with_lstsq(nus, Ys, Zs):
    p, Ns = nus.shape
    r, Nt, Ns = Ys.shape  # or Zs.shape

    K  = np.einsum("ias,xs->iaxs", Ys, nus)
    Dt = K.transpose((0, 2, 1, 3)).reshape((r*p, Nt*Ns), order="F")
    R  = Zs.reshape((r, Nt*Ns), order="F")

    Obar = np.linalg.lstsq(Dt.T, R.T)[0].T
    return Obar.reshape((r, r, p), order="F")
\end{minted}
\vspace{.25cm}
\end{algorithm}

\begin{remark}[Regularization]\label{rem:regularization}
Note that the optimization problems underpinning \Cref{alg:normal_eqns_unstructured} and \Cref{alg:lstsq_unstructured} are solved without regularization. While the selection strategies of \cite{mcquarrie2021regopinf,mcquarrie2023popinf} could be readily incorporated to induce greater numerical stability in the learned operators, our preference is to apply structural constraints when possible, as these are inherently interpretable and implicitly regularize the learning problem through the restriction biases they place on the space of allowable solutions.  \Cref{sec:Hamiltonian} will provide one example of this approach tailored to operators with symmetry.
\end{remark}

The expressions in \Cref{prop:tsrlstsqprop} involving $~D$, $\bar{~O}$, and $~R$ exactly correspond to the state-linear parts of the inference problem~(2.13) in previous work \cite{mcquarrie2023popinf}. The advantage of \Cref{alg:lstsq_unstructured}, in addition to the brevity of the implementation, is that the expression for $~D$ in \cref{prop:tsrlstsqprop} provides insight into when the inference of $\bar{\tens{T}}$ in \cref{eq:genericminprob} has a unique solution.
This is evidenced by the following concise result, analogous to \cite[Theorem~2.3]{mcquarrie2023popinf}.

\begin{corollary}\label{cor:uniqueness}
    Let $~D$, $~R$, and $\ell(\bar{~O})$ be as in \cref{prop:tsrlstsqprop}. Let $~\Theta\in\mathbb{R}^{N_s\times p'}$ be the matrix with entries $\Theta_{sx} = (~\nu_s)_x$. The inference problem $\argmin\,\ell\lr{\bar{~O}}$ in \eqref{eq:vectorizedpOpInf-unstructured} has a unique solution $\bar{~O}=\cvec_{23}\,\bar{\tens{T}}$ if and only if both $~\Theta$ and $(\cvec_{23}\,\hat{\tens{Y}})\trp\in\mathbb{R}^{N_tN_s\times r}$  have full column rank.
\end{corollary}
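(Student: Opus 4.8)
The plan is to reduce uniqueness to a single rank condition on $~D$ and then translate it. By \Cref{prop:tsrlstsqprop} the minimization of $\ell$ splits into the $r$ independent least-squares problems in \cref{eq:vectorizedpOpInf-unstructured}, each of the form $\argmin_{\bar{~o}^a}\nn{~D\bar{~o}^a-~r^a}^2$ and all sharing the coefficient matrix $~D\in\mathbb{R}^{N_tN_s\times rp'}$. Since the minimizer set of an ordinary least-squares problem $\min_{~x}\nn{~D~x-~b}^2$ is an affine translate of $\ker~D$, and the minimizers of $\ell$ form the Cartesian product of the $r$ subproblem minimizer sets, $\argmin\,\ell$ is a singleton if and only if $\ker~D=\{~0\}$, i.e.\ $~D$ has full column rank. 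Thus the statement reduces to characterizing $\ker~D$.

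Next I would make $\ker~D$ explicit from the entrywise form of $~D$ implied by $\tens{K}$ in \Cref{prop:tsrlstsqprop}. Because $\mathrm{K}_{b\alpha xs}=(\hat{~Y}_s)_{b\alpha}(~\nu_s)_x$, a vector $~c\in\mathbb{R}^{rp'}$---identified with a matrix $~C\in\mathbb{R}^{r\times p'}$ via the same unrolling that turns $\bar{\tens{T}}$ into $\bar{~O}$---has the property that the block of $~D~c$ associated with the $s$-th parameter sample equals $\hat{~Y}_s\trp~C~\nu_s\in\mathbb{R}^{N_t}$. Hence $~c\in\ker~D$ if and only if $\hat{~Y}_s\trp~C~\nu_s=~0$ for all $s=1,\dots,N_s$ (equivalently, using $~D\trp~D=\hat{~B}$, a sum over $s$ of positive-semidefinite matrices, $~C$ is a null direction precisely when $\nn{\hat{~Y}_s\trp~C~\nu_s}=0$ for each $s$).

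Given this, the necessity direction is immediate by exhibiting rank-one null matrices. If $~\Theta$ is column-rank deficient, pick $~0\ne~d\in\mathbb{R}^{p'}$ with $~\nu_s\trp~d=0$ for all $s$ and set $~C=~v~d\trp$ for any $~0\ne~v\in\mathbb{R}^r$: then $~C~\nu_s=~0$, so $~C\in\ker~D$ although $~C\ne~0$. Symmetrically, if $(\cvec_{23}\,\hat{\tens{Y}})\trp$ is column-rank deficient there is $~0\ne~e\in\mathbb{R}^r$ with $\hat{~Y}_s\trp~e=~0$ for all $s$, and $~C=~e~w\trp$ satisfies $\hat{~Y}_s\trp~C~\nu_s=(\hat{~Y}_s\trp~e)(~w\trp~\nu_s)=~0$ with $~C\ne~0$. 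Hence unique solvability forces both rank conditions.

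The sufficiency direction is the step I expect to be the main obstacle. Assuming $~\Theta$ and $(\cvec_{23}\,\hat{\tens{Y}})\trp$ both have full column rank---equivalently, that $\{~\nu_s\}_{s=1}^{N_s}$ spans $\mathbb{R}^{p'}$ and $\sum_s\hat{~Y}_s\hat{~Y}_s\trp$ is positive definite---one must deduce $~C=~0$ from $\hat{~Y}_s\trp~C~\nu_s=~0$ $(s=1,\dots,N_s)$. The intended route is to combine these relations across $s$ (for instance weighting by $~\nu_s$ and summing) and to isolate $~C$ using invertibility of the data Gram matrix together with the spanning property of the parameter samples. The subtlety---and the reason I flag this as the hard step---is that the hypotheses control $~C$ only through the coupled products $\hat{~Y}_s\trp~C~\nu_s$, so the two factors resist decoupling; in fact these two rank conditions alone do not appear to force $~C=~0$ in full generality (one can arrange, sample by sample, for $~C~\nu_s$ to lie in $\ker\hat{~Y}_s\trp$ with $~C\ne~0$), which suggests the clean statement additionally wants each $\hat{~Y}_s$ to have full row rank. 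Under that mild, practically automatic hypothesis the implication is immediate---$\ker\hat{~Y}_s\trp=\{~0\}$ gives $~C~\nu_s=~0$ for all $s$, hence $~C=~0$ by spanning---and the $(\cvec_{23}\,\hat{\tens{Y}})\trp$ condition is subsumed, leaving the full-column-rank condition on $~\Theta$ as the operative requirement; I would carry out the proof along these lines.
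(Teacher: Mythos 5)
Your reduction of uniqueness to $\ker~D=\{~0\}$, your identification of the kernel with the set of matrices $~C\in\mathbb{R}^{r\times p'}$ satisfying $\hat{~Y}_s\trp~C~\nu_s=~0$ for all $s$, and your rank-one constructions for the necessity direction all match the substance of the paper's proof, which performs the same computation in index notation. The point at which you stop is also exactly the point at which the paper's argument breaks down: the paper asserts that a nonzero kernel element together with full column rank of $~\Theta$ forces $(\cvec_{23}\,\hat{\tens{Y}})\trp$ to be rank-deficient (and vice versa), but the kernel condition only constrains the coupled products\textemdash it says $~C~\nu_s\in\ker\hat{~Y}_s\trp$ \emph{sample by sample}, not that some fixed vector is annihilated by every $\hat{~Y}_s\trp$, nor that some fixed vector lies in $\ker~\Theta$. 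Your suspicion that the two stated rank conditions are not sufficient is correct. A concrete counterexample: take $p'=N_s=2$ with $~\nu_1=~e_1$ and $~\nu_2=~e_2$ (so $~\Theta=~I$), let $\hat{~Y}_1\in\mathbb{R}^{r\times N_t}$ be row-rank-deficient with $~0\neq~w\in\ker\hat{~Y}_1\trp$, let $\hat{~Y}_2$ have full row rank (so that the stacked matrix $(\cvec_{23}\,\hat{\tens{Y}})\trp$ has full column rank $r$), and set $~C=~w~e_1\trp\neq~0$. Then $\hat{~Y}_1\trp~C~\nu_1=\hat{~Y}_1\trp~w=~0$ and $~C~\nu_2=~0$, so $~C\in\ker~D$ even though both hypotheses of \Cref{cor:uniqueness} hold and $N_tN_s\geq rp'$ can be arranged.

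Consequently the ``if'' direction of the corollary as stated does not hold, and the repair you propose is the right one: assume full column rank of $~\Theta$ together with full row rank of \emph{each} $\hat{~Y}_s$ (which subsumes the condition on the concatenation $(\cvec_{23}\,\hat{\tens{Y}})\trp$). Under that hypothesis $\hat{~Y}_s\trp~C~\nu_s=~0$ gives $~C~\nu_s=~0$ for every $s$, and the spanning property of the $~\nu_s$ then gives $~C=~0$, so $~D$ has full column rank and the minimizer is unique; combined with your necessity argument this yields a correct (slightly asymmetric) characterization. In short, your proposal is not missing an idea\textemdash it correctly locates a genuine gap in the paper's own proof and supplies the fix.
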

\begin{proof}
    Similar to \Cref{prop:tsrlstsqprop}, write
    $~D = \lr{\cvec_{13}\cvec_{24}\,\tens{K}}\trp \in\mathbb{R}^{N_tN_s\times rp'}$, where the tensor $\tens{K}\in\mathbb{R}^{r\times N_t\times p'\times N_s}$ has components $\mathrm{K}_{b\alpha xs} = (\hat{~Y}_s)_{b\alpha}\Theta_{sx}$.
    Note that $\tens{~K}$ ``factors'' as the combination tensor-Hadamard product of  $\hat{\tens{Y}}$ and $~1\otimes ~\Theta\trp$: it is an outer product in the indices $b,x$ and a Hadamard product in the indices $\alpha,s$.  This implies that, for any $~v\in\mathbb{R}^{rp'}$ in the kernel of $~D$, (Einstein summation assumed)
    \begin{align*}
        \lr{\cmat_{12}\,~D~v}_{\alpha s} &= \lr{\cmat_{12}\,\left[\cmat_{34}\,~D:\cmat_{12}\,~v^\intercal\right]}_{\alpha s} =  \hat{\mathrm{Y}}_{b\alpha s}\Theta_{sx}\lr{\cmat_{12}\,~v}^{bx} \\
        &= \hat{\mathrm{Y}}_{b\alpha s}\Theta_{sx}\lr{\mathrm{v}^{ay}~e_a~e_y\trp}^{bx} = \mathrm{v}^{ay}\hat{\mathrm{Y}}_{b\alpha s}\Theta_{sx}\delta^b_a\delta^x_y \\
        &= \lr{\hat{\mathrm{Y}}_{a\alpha s} \mathrm{v}^{ay}}\Theta_{sy} =  \hat{\mathrm{Y}}_{a\alpha s} \lr{\mathrm{v}^{ay}\Theta_{sy}} = 0.
    \end{align*}
    If $~\Theta$ has full column rank, the first equality on the last line shows that $(\cvec_{23}\,\hat{\tens{Y}})\trp$ is rank-deficient.  Similarly, if $(\cvec_{23}\,\hat{\tens{Y}})\trp$ has full column rank, the second equality on the last line shows that $~\Theta$ is rank-deficient. Therefore, $~D$ has full column rank whenever these matrices do, while if either is rank deficient, then so is $~D$.
\end{proof}

\begin{remark}[Nonlinear terms]
    \label{remark:nonlinear}
    Tensor parametric OpInf can be extended in a straightforward manner to account for quadratic (or higher-order) polynomial terms $\tens{H}(~\mu)\!:\!\lr{~q\otimes~q}$ with affine parameter dependence appearing in~\cref{eq:ODE}, such as those seen in \cite{mcquarrie2023popinf}. In this case, a tensor $\tens{T}$ satisfying $\tens{H}(~\mu) = \tens{T}~\mu'$ must have order 4, as $\tens{H}$ (if not matricized) is of order 3. Since higher-order terms are not necessary for the numerical examples presented later, such extensions are left for future work.
\end{remark}

The results of this section demonstrate how solutions to the tensor parametric OpInf problem are global minima and how the tensor reformulation clearly and concisely recovers previous parametric OpInf formulations.
Before discussing how this approach also enables a novel extension to systems with Hamiltonian structure, an example is presented to illustrate its practical utility.

\subsection{Example: Heat equation}
\label{sec:heat-equation}
Let $\Omega\subset\mathbb{R}^d$ where $d\in\{1,2,3\}$ and consider the following initial value problem with homogeneous Dirichlet boundary conditions,
\begin{align}\label{eq:heat}
    \left\{\phantom{-}
    \begin{aligned}
        &\dot{q}(~x,t) = \nabla\cdot\lr{c(~x,~\mu)\nabla q(~x,t)}, &~x \in \Omega \times (0,t_f],\\
        &q(~x,0 ) = q_0(~x),  &~x \in \Omega,\\
        & q(~x,t) = 0, &~x \in \partial\Omega \times (0,t_f],
    \end{aligned}
\right.
\end{align}
where $q:\Omega\times(0,t_f)\to\mathbb{R}$ is the continuous system state, $c:\Omega\times\mathbb R^{p} \to \mathbb R$ is the parameterized thermal conductivity coefficient, and $q_0:\Omega\to\mathbb{R}$ is a given initial condition. Note that $q$ depends implicitly on the conductivity parameters $~\mu$. Let $\Omega = \bigcup_{i=1}^{p}\Omega_i$ be a non-overlapping decomposition of the spatial domain and consider the piecewise-continuous $c(~x,~\mu)$ given by
\begin{align*}
    c(~x,~\mu)
    = \left[~x\in\Omega_1\right]\,\mu_1
    + \left[~x\in\Omega_2\right]\,\mu_2
    + \ldots
    + \left[~x\in\Omega_p\right]\,\mu_p,
    \qquad
    ~\mu = [\mu_1\,\,\mu_2\,\,\cdots\,\,\mu_p]\trp,
\end{align*}
where $[S]$ is the Iverson bracket \cite{knuth1992notes}, i.e., the indicator function of the statement $S$.

A finite-element-based FOM for~\cref{eq:heat} can be readily constructed using a continuous Galerkin discretization in space.
Let $\Omega_h := \{K_i\}_{i=1}^{N_E}$ be a conforming triangulation of the domain $\Omega$ containing $N_E$ elements.
The Galerkin weak formulation of \cref{eq:heat} with this spatial discretization is the following:
find $\dot q_h \in V_h$ such that for all $v_h \in V_h$,
\begin{align}\label{eq:heatfem}
    (\dot q_h, v_h)_{\Omega_h} = -(c(\cdot, ~\mu)\nabla q_h, \nabla v_h)_{\Omega_h},
\end{align}
where {$(\cdot,\cdot)_{\Omega_h}$ denotes the $L^2$ inner product on $\Omega_h$ and} $V_h$ is the $H^1(\Omega)$-conforming finite element space
\begin{align*}
    V_h := \{ v_h \in H^1(\Omega) : \left.v_h\right|_K\in P_1(K), \enspace \forall K \in \Omega_h;  \left.v_h\right|_{\partial \Omega} = 0\},
\end{align*}
with $P_1(K)$ representing the space of linear polynomials on element $K$.
Given a basis $\{\phi_i\}_{i=1}^{N}$ for $V_h$, inserting the approximation $q(~x,t) \approx q_h(~x,t) := \sum_{i=1}^{N}q_i(t)\phi(~x)$ into~\cref{eq:heatfem} leads to the system of $N$ ODEs $~M\dot{~q}=~A(~\mu)~q$ with matrix entries
\begin{subequations}
\begin{align}
    \label{eq:heatfom-indices}
    \begin{aligned}
    (~M)_{ij}
    &= (\phi_i, \phi_j)_{\Omega_h},
    \\
    (~A(~\mu))_{ij}
    &= - (c(\cdot,~\mu)\nabla\phi_i,\nabla\phi_j)_{\Omega_h}
    = -\sum_{x=1}^{p}\mu_x(\nabla\phi_i, \nabla\phi_j)_{\Omega_x}.
    \end{aligned}
\end{align}
The corresponding tensorized FOM $~M\dot{~q} = (\tens{T}~\mu')~q$ has tensor entries
\begin{align}
    \label{eq:heatfom-indices-tensor}
    \mathrm{T}_{ijk}
    = \lr{[~x\in \Omega_k]\nabla\phi_i, \nabla\phi_j}_{\Omega_h}
    = \lr{\nabla\phi_i, \nabla\phi_j}_{\Omega_k}.
\end{align}
\end{subequations}
Given snapshot matrices $~Q_1,\ldots,~Q_{N_s}$, \Cref{alg:normal_eqns_unstructured} or \Cref{alg:lstsq_unstructured} can now be used with data $~Y_s = ~U\trp~M~Q_s$ and $~Z_s = ~U\trp~MD_t(~Q_s)$ to infer a reduced tensor $\bar{\tens{T}}\approx\hat{\tens{T}}$, producing a parametric OpInf ROM $\dot{\hat{~q}} = \lr{\bar{\tens{T}}~\mu}\hat{~q}$.

\begin{remark}[Tensor symmetry]
The full-order tensor $\tens{T}$ in \eqref{eq:heatfom-indices-tensor} is symmetric in its first two indices, reflecting the fact that the heat equation \eqref{eq:heat} is an $L^2$-gradient flow. However, \cref{alg:normal_eqns_unstructured} and \cref{alg:lstsq_unstructured} do not enforce this symmetry as a constraint, i.e., the learned tensor $\bar{\tens{T}}$ is not necessarily symmetric in its first two indices.
The next section develops a tensor parametric OpInf algorithm with additional constraints that preserve symmetric (or anti-symmetric) tensor structure.
\end{remark}

 \section{Tensor parametric operator inference for Hamiltonian systems}\label{sec:Hamiltonian}
The concise tensor formulation developed in \Cref{sec:generic} enables a principled extension to parametric Hamiltonian systems, where preservation of additional algebraic structure in the learned tensor $\bar{\tens{T}}\approx\hat{\tens{T}}$ is necessary in order to enable the energy-conservative and symplectic characteristics of Hamiltonian dynamics. \Cref{sec:hamiltonian-background} summarizes the key aspects of Hamiltonian model reduction, and \Cref{sec:hamiltonian-algorithm} develops an analog to \Cref{alg:normal_eqns_unstructured} that accounts for the additional desired structure. An application to wave phenomena is outlined in \Cref{sec:wave}.

\subsection{Hamiltonian model reduction}\label{sec:hamiltonian-background}
Canonical Hamiltonian systems are defined by the joint state variable $~y = [~q\trp\enspace ~p\trp]\trp\in\mathbb{R}^{2N}$, a Hamiltonian functional $H:\mathbb{R}^{2N}\to\mathbb{R}$ representing a notion of energy, and a skew-adjoint block matrix $
    ~J
= -~J\trp
    \in \mathbb{R}^{2N\times 2N}
$ encoding the symplectic structure:
\begin{align}\label{eq:ClassicHamiltonian}
    \dot{~y} = \{~y, H(~y)\} = ~J\nabla H(~y),
    \qquad
    ~J = \left[\begin{array}{cc}
         ~0 & ~I \\
        -~I & ~ 0
    \end{array}\right].
\end{align}
Here, $\{F,G\} = \nabla F\cdot~J\nabla G$ is a particular Lie algebra realization on functions known as the (canonical) Poisson bracket, and notation has been slightly abused by vectorizing this bracket over the components of the state $~y$.
A core feature of Hamiltonian systems is that their solutions conserve energy for all time, which can be seen by computing the rate of change in $H$,
\begin{align*}
    \dot{H}(~y)
    = \dot{~y}\trp\nabla H(~y)
&= \nabla H(~y)\trp~J\trp\nabla H(~y)
    = -\nabla H(~y)\trp~J\nabla H(~y)
    \\
    &= \lr{\nabla H(~y)\trp~J\trp\nabla H(~y)}\trp
    = \nabla H(~y)\trp~J\nabla H(~y),
\end{align*}
which shows that $\dot{H}(~y) = 0$ for all $~y$.
This reflects the fact that Hamiltonian systems yield symplectic gradient flows whose solutions are confined to individual level sets of $H$ \cite{Marsden_BOOK_1998}.

When Hamiltonian systems are discretized with the finite element method, it is typical to incorporate a symmetric positive definite mass matrix $~M\in\mathbb{R}^{2N\times 2N}$, leading to models of the form
\begin{align}\label{eq:massedHamiltonian}
    ~M\dot{~y}
    = ~L~M^{-1}\nabla H(~y),
    \qquad
    ~L\trp = -~L.
\end{align}
To expose the Hamiltonian structure of this system, define $\bar{~J} = ~M^{-1}~L$ and write
\begin{align}\label{eq:mgradHamiltonian}
    \dot{~y}
    = ~M^{-1}~L~M^{-1}\nabla H(~y)
    = \bar{~J}\,\Mnabla H(~y),
\end{align}
where $\Mnabla H = ~M^{-1}\nabla H$ is the gradient of $H$ with respect to the weighted inner product $~a^*~b=~a\trp~M~b$ induced by $~M$ (see \Cref{app:morthonormal}).
The adjoint of $\bar{~J}$ in this inner product is given by
\begin{align*}
    \bar{~J}^{*}
    = (~M^{-1}~L)^{*}
    = ~M^{-1}(~M^{-1}~L)\trp ~M
    = ~M^{-1}~L\trp~M^{-\mathsf{T}} ~M
    = -~M^{-1}~L =-\bar{~J}.
\end{align*}
Therefore, $\bar{~J}$ is skew-adjoint with respect to $~M$, hence \eqref{eq:mgradHamiltonian} is a Hamiltonian system in the $~M$-weighted inner product.  Additionally, $H$ is conserved along solutions:
\begin{align*}
    \dot{H}(~y)
    = \dot{~y}^*\Mnabla H(~y)
    = \lr{\bar{~J}\,\Mnabla H(~y)}^*\Mnabla H(~y)
    = -\Mnabla H(~y)^*\lr{\bar{~J}\,\Mnabla H(~y)} = 0.
\end{align*}

\begin{remark}[Block diagonal mass]
It is often the case that $~L = ~J~M$ and that the mass matrix $~M$ can be written as
$
    ~M
    = \mathrm{blockdiag}(~M_q,~M_q),
$
where $~M_q\in\mathbb{R}^{N\times N}$ is a symmetric positive definite mass matrix corresponding to $~q$ and $~p$.
In this case, $~J~M = ~M~J$ and, hence, $\bar{~J} = ~M^{-1}~L = ~M^{-1}~J~M = ~J$.
For the remainder of this section, $~J$ is written instead of $\bar{~J}$ and it is assumed that $~J^{*} = - ~J$.
\end{remark}

The effectiveness of Hamiltonian systems in modeling non-dissipative phenomena has motivated a significant amount of work into their model reduction \cite{Afkham:2017,Buckfink:2021,GRUBER2023116334,gruber2024vc,hesthaven2022structure,peng2016symplectic,Sharma:2023,sharma2022hamiltonian}.  The key observation is that Hamiltonian systems are governed by a variational principle, and violation of this principle at the reduced-order level leads quickly to unphysical results, especially in the predictive regime where ROMs are intended to provide computational savings (see, e.g., \cite[Figure 10]{GRUBER2023116334}).
Effective ROMs for Hamiltonian systems---whether intrusive or non-intrusive---must therefore preserve the Hamiltonian structure, meaning that the reduced dynamics should be governed by an appropriate reduced Hamiltonian function $\hat{H}:\mathbb{R}^{2r}\to\mathbb{R}$ and reduced skew-adjoint matrix $\hat{~J}:\mathbb{R}^{2r \times 2r}$.\footnote{In fact, $\hat{~J}=\hat{~J}(~y)$ can generally be a matrix field, and not all skew-adjoint $\hat{~J}$ define valid Hamiltonian systems.  More details can be found in, e.g., \cite{GRUBER2023116334,gruber2024vc}.}

Structure preservation is equally critical in ROMs for \emph{parametric} Hamiltonian systems, i.e., those governed by a parameterized Hamiltonian $H = H(~y,~\mu)$. In this case, the ROM must be defined in terms of the gradient of an appropriate reduced parameterized Hamiltonian.
To this end, consider the Hamiltonian
\begin{align}
    \label{eq:theHamiltonian}
    H(~y,~\mu)
    = \frac{1}{2}~y^*~A(~\mu)~y + f(~y,~\mu),
\end{align}
defined in terms of a nonlinear function $f:\mathbb{R}^{2N}\times\mathbb{R}^{p}\to\mathbb{R}$ and a matrix field $~A:\mathbb{R}^{p}\to\mathbb{R}^{2N\times 2N}$ with affine parametric dependence.
The (canonical) Hamiltonian system generated by $H$ takes the form
\begin{align}
    \label{eq:mgradCanonicalHamiltonian}
    \dot{~y}
    = ~J\Mnabla H(~y,~\mu)
    = ~J(~A(~\mu)~y+\Mnabla f(~y,~\mu)).
\end{align}
It may be assumed that $~A(~\mu)$ is self-adjoint, that is, $~A(~\mu) = ~A(~\mu)^{*} = ~M^{-1}~A\trp(~\mu)~M$, since only the self-adjoint part of $~A(~\mu)$ contributes to the value of $H$ (see \Cref{thm:Madjoint,thm:selfadjoint}).
Since $~A$ depends affinely on $~\mu$, it follows as before that $~A(~\mu) = \tens{T}~\mu'$ for some constant tensor $\tens{T}\in\mathbb{R}^{2N\times 2N\times p'}$ and a vector $~\mu' = ~\theta(~\mu)\in\mathbb{R}^{p'}$.
Additionally, because $~A(~\mu)$ is self-adjoint, it follows that $\tens{T}$ is self-adjoint in its first two indices, denoted $\tens{T}^{*} = \tens{T}$.
The key to the tensor parametric Hamiltonian OpInf developed here will be an inference procedure analogous to \cref{alg:normal_eqns_unstructured} which explicitly takes this symmetry into account.

To see the constraints that Hamiltonian structure places at the reduced-order level, consider \cref{eq:mgradCanonicalHamiltonian} with $f\equiv 0$ and a known initial condition:
\begin{align}\label{eq:HODE}
    \dot{~y}
= ~J~A(~\mu)~y
    = ~J \lr{\tens{T}~\mu'}~y,
    \qquad ~y(0,~\mu) = ~y_0(~\mu).
\end{align}
Given a trial space basis $~U\in\mathbb{R}^{2N\times 2r}$ which is metric-compatible in the sense that $~U^*~U = \hat{~M}~U\trp~M~U = ~I$ for some $\hat{~M}\in\mathbb{R}^{2r\times 2r}$ defining an inner product on the reduced coordinates (typically $\hat{~M} = ~I$), making the approximation $\tilde{~y} \coloneqq ~U\hat{~y} \approx ~y$ with $\hat{~y} = \hat{~y}(t,~\mu)\in\mathbb{R}^{2r}$ and performing $~M$-orthogonal projection in \cref{eq:HODE} results in the following (intrusive) ROM:
\begin{align}\label{eq:naiveROM}
    \dot{\hat{~y}} = ~U^* ~J \lr{\tens{T}~\mu'} ~U\hat{~y},
    \qquad \hat{~y}(0,~\mu) = ~U^* ~y_0(~\mu).
\end{align}
Unfortunately, the ROM~\cref{eq:naiveROM} is not Hamiltonian with respect to the reduced inner product $\hat{~a}^*\hat{~b}=\hat{~a}\trp\hat{~M}\hat{~b}$ since $(~U^* ~J)^* = ~J^* ~U = -~J~U \neq -~U^* ~J$, which implies that the symplectic structure of the original system is lost.  Moreover, the natural reduced Hamiltonian functional $\hat{H}(\hat{~y},~\mu') := H(~U\hat{~y},~\mu)$ is not conserved: for any $~\mu'\in\mathbb{R}^{p'}$,
\begin{align*}
    \dot{\hat H}(\hat{~y},~\mu')
    = \dot{\hat{~y}}^*\nabla^{\hat{M}}\!\hat H(\hat{~y})
    &= \lr{~U^* ~J \lr{\tens{T}~\mu'} ~U\hat{~y}}^* ~U^* \lr{\tens{T}~\mu'}~U\hat{~y}
    \\
    &=  -\lr{\lr{\tens{T}~\mu'}~U\hat{~y}}^*~J~U~U^*\lr{\tens{T}~\mu'}~U\hat{~y}
    \neq 0.
\end{align*}
In other words, the structural and conservative properties of the FOM~\cref{eq:HODE} are not automatically inherited by the projection-based ROM~\cref{eq:naiveROM}, and solutions of \cref{eq:naiveROM} are liable to exhibit artificial energetic profiles and stability issues.

There are two primary approaches to circumventing this defect: modifying the reduced basis $~U$, or modifying the equation form~\cref{eq:naiveROM}.  The first approach was initiated by the work~\cite{peng2016symplectic}, which introduced the Proper Symplectic Decomposition (PSD) for building Hamiltonian ROMs.  This involves constructing a basis $~U\in\mathbb{R}^{2N\times 2r}$ with the desirable $~J$-equivariance property $~U^* ~J = \hat{~J}~U^*$, where $\hat{~J} = -\hat{~J}^{*} \in \mathbb{R}^{2r\times 2r}$.
The advantage of this property is that a variationally consistent and canonical Hamiltonian ROM follows directly from Galerkin projection onto the span of $~U$,
\begin{align}\label{eq:PSDROM}
    \dot{\hat{~y}}
    = ~U^* ~J\lr{\tens{T}~\mu'}~U\hat{~y}
    = \hat{~J}~U^*\lr{\tens{T}~\mu'}~U\hat{~y}
    = \hat{~J}\lr{\hat{\tens{T}}~\mu'}\hat{~y},
\end{align}
where the reduced tensor $\hat{\tens{T}} \in \mathbb{R}^{2r\times 2r\times p'}$ is (assuming $\hat{~M} = ~I$) symmetric in its first two indices, written $\hat{\tens{T}} =\hat{\tens{T}}\trp$, and $(\hat{\tens{T}}~\mu')\hat{~y}=\nabla \hat{H}\lr{\hat{~y},~\mu}$ is the (Euclidean) gradient of the reduced Hamiltonian,
\begin{align}
    \label{eq:reduced-hamiltonian}
    \hat{H}(~y,~\mu') = (1/2)\hat{~y}\trp(\hat{\tens{T}}~\mu')\hat{~y}.
\end{align}
This intrusively constructed ROM exactly preserves the reduced Hamiltonian $\hat{H}$, since
\begin{align*}
    \dot{\hat{H}}(\hat{~y},~\mu')
    = \dot{\hat{~y}}\trp \nabla \hat{H}(\hat{~y},~\mu')
    = \hat{~y}\trp(\hat{\tens{T}}~\mu')\trp \hat{~J}\trp(\hat{\tens{T}}~\mu') \hat{~y}
    = - \hat{~y}\trp(\hat{\tens{T}}~\mu')\trp \hat{~J}(\hat{\tens{T}}~\mu') \hat{~y} = 0.
\end{align*}
Mirroring this structure, an appropriate OpInf ROM for \cref{eq:HODE} that incorporates the dynamics induced by the Hamiltonian used above is
\begin{align}
    \label{eq:hamiltonianopinfROM}
    \dot{\hat{~y}}
    = \hat{~J}\lr{\bar{\tens{T}}~\mu'}\hat{~y},
    \qquad \hat{~y}(0,~\mu) = ~U^*~y_0(~\mu),
\end{align}
as long as the inferred tensor $\bar{\tens{T}}$ is constrained to satisfy the symmetry property $\bar{\tens{T}}\trp = \bar{\tens{T}}$.
Similar to the previous computation, it is straightforward to verify that the ROM~\cref{eq:hamiltonianopinfROM} exactly preserves the reduced Hamiltonian $\bar{H}(\hat{~y},~\mu) = (1/2)\hat{~y}\trp(\bar{\tens{T}}~\mu')\hat{~y}$, which approximates $\hat{H}(\hat{~y},~\mu)$.
The next section extends the work of \Cref{sec:generic} to infer Hamiltonian ROMs of the form \cref{eq:hamiltonianopinfROM} in a non-intrusive fashion.

The alternative to designing the basis matrix $~U$ to satisfy the potentially restrictive $~J$-equivariance property is to modify the model form~\cref{eq:naiveROM}.
An effective way of accomplishing this while retaining variational consistency was presented in~\cite{gruber2024vc}, which applies Petrov--Galerkin projection onto the test space $~J~U$.  Applying $(~J~U)^*$ to both sides of \cref{eq:HODE} yields
\begin{align}\label{eq:conROM}
    \dot{\hat{~y}} = \hat{~J}\invtrp\nabla \hat{H}(\hat{~y},~\mu')
    = \hat{~J}\invtrp \lr{\hat{\tens{T}}~\mu'}\hat{~y},
\end{align}
which satisfies $\hat{\tens{T}}\trp=\hat{\tens{T}}$ and is symplectic since $\hat{~J}\invtrp = \lr{~U^*~J~U}\invtrp = -\hat{~J}^{-1}$ is constant in the state.\footnote{The fact that $\hat{~J}\invtrp$ is constant implies that the Jacobi identity reduces to the symmetry condition $\hat{~J}\invtrp = -\hat{~J}^{-1}$.}
Like~\cref{eq:PSDROM}, this ROM conserves the reduced Hamiltonian $\hat{H}\lr{\hat{~y},~\mu}$.

\subsection{Enforcing symmetries in tensor parametric operator inference}\label{sec:hamiltonian-algorithm}
A key feature of the canonical Hamiltonian ROMs \cref{eq:PSDROM}--\cref{eq:conROM} is the preservation of the symmetry condition $\hat{~A}(~\mu)\trp = \hat{~A}(~\mu)$ governing the reduced Hamiltonian $\hat{H}$.  In the case of affine parameter dependence, this reduces to the symmetry constraint $\hat{\tens{T}}\trp = \hat{\tens{T}}$ on the first two indices of the tensor representation $\hat{\tens{T}}~\mu' = \hat{~A}(~\mu)$.
The natural extension of the tensor OpInf problem \cref{eq:rODE-opinf} to this setting is the following constrained regression problem:
\begin{align}
    \label{eq:constrainedminimization}
    \argmin_{\bar{\tens{T}}}\frac{1}{2}\sum_{s=1}^{N_s}\nn{\dot{\hat{~Q}}_s - \hat{~J}\lr{\bar{\tens{T}}~\mu_s'}\hat{~Q}_s }^2
    \quad
    \mathrm{s.t.}
    \quad
    \bar{\tens{T}}\trp = \bar{\tens{T}},
\end{align}
where, as in the setting of \Cref{sec:generic}, $\hat{~Q}_s,\dot{\hat{~Q}}_s\in\mathbb{R}^{2r\times N_t}$ are reduced state snapshot matrices and (approximate) reduced state time derivative matrices corresponding to parameter samples $~\mu_s\in\mathbb{R}^{p'}$, $s=1,\ldots,N_s$.
The following theorem provides a compact strategy for solving this problem.

\begin{theorem}\label{thm:popinf-Hamiltonian}
    Let $\hat{~X}_s\in\mathbb{R}^{r \times r}, \hat{~Y}_s\in\mathbb{R}^{r \times N_t}, \hat{~Z}_s\in\mathbb{R}^{r \times N_t}$, and $~\nu_s\in\mathbb{R}^{p'}$ for $s = 1,\ldots,N_s$. Then solutions to the convex minimization problem
    \begin{align*}
            \argmin_{\bar{\tens{T}}}\frac{1}{2}\sum_{s=1}^{N_s}\nn{\hat{~Z}_s-\hat{~X}_s\lr{\bar{\tens{T}}~\nu_s}\hat{~Y}_s}^2 \quad \mathrm{s.t.}\quad\bar{\tens{T}}~\eta = \pm \lr{\bar{\tens{T}}~\eta}\trp \quad \forall~\eta\in\mathbb{R}^{p'},
    \end{align*}
    can be computed by solving the linear system
    \begin{align}\label{e:HOPINF}
        \begin{aligned}
        \sum_{s=1}^{N_s}\left[~\nu_s~\nu_s\trp\otimes_K\lr{\hat{~X}_s\trp\hat{~X}_s\baroplus_K\hat{~Y}_s\hat{~Y}_s\trp}\right]\,&\vect\,\bar{\tens{T}} \\= \sum_{s=1}^{N_s}~\nu_s\,\otimes_K\,&\cvec_{12}\lr{\hat{~X}_s\trp\hat{~Z}_s\hat{~Y}_s\trp\pm \hat{~Y}_s\hat{~Z}_s\trp\hat{~X}_s},
        \end{aligned}
    \end{align}
where $\vect\,\bar{\tens{T}}=\cvec_{12}\lr{\cvec_{12}\,\bar{\tens{T}}}$ is a total column-wise flattening of the tensor $\bar{\tens{T}}\in\mathbb{R}^{r\times r \times p'}$, $~X\otimes_K~Y = \rvec_{13}\rvec_{24}\lr{~X\otimes~Y}$ denotes the usual Kronecker product of matrices, and $~X\baroplus_K~Y \coloneqq ~X\otimes_K~Y+~Y\otimes_K~X$ denotes a symmetric Kronecker sum.
\end{theorem}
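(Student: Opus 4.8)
The plan is to follow the template of \Cref{thm:unstructured}: write down the first-order optimality condition for the (now constrained) least-squares objective $L(\bar{\tens{T}}) = \tfrac12\sum_{s}\nn{\hat{~Z}_s - \hat{~X}_s\lr{\bar{\tens{T}}~\nu_s}\hat{~Y}_s}^2$ and rearrange it into \cref{e:HOPINF}. Abbreviate the sign in the constraint by $\sigma = \pm 1$. The feasible set $S_\sigma \coloneqq \{\bar{\tens{T}} : \lr{\bar{\tens{T}}~\eta}\trp = \sigma\lr{\bar{\tens{T}}~\eta}\ \text{for all}\ ~\eta\}$ is a \emph{linear subspace} of $\mathbb{R}^{r\times r\times p'}$: it consists of tensors each of whose frontal slices is symmetric ($\sigma = +1$) or skew-symmetric ($\sigma = -1$), equivalently $\bar{\tens{T}}\trp = \sigma\bar{\tens{T}}$ where $(\cdot)\trp$ transposes the first two indices. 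Since $L$ is a convex quadratic, $\bar{\tens{T}}$ solves the constrained problem if and only if $\bar{\tens{T}}\in S_\sigma$ and the Frobenius-orthogonal projection of $\nabla L(\bar{\tens{T}})$ onto $S_\sigma$ vanishes. The map $\tens{A}\mapsto\tens{A}\trp$ is an involution that is self-adjoint for the Frobenius inner product, so its $\sigma$-eigenspace is precisely $S_\sigma$ and the orthogonal projector onto $S_\sigma$ is the (anti)symmetrization $\tens{A}\mapsto\tfrac12\lr{\tens{A} + \sigma\,\tens{A}\trp}$; hence the optimality condition is $\nabla L(\bar{\tens{T}}) + \sigma\,\nabla L(\bar{\tens{T}})\trp = ~0$ together with $\bar{\tens{T}}\in S_\sigma$.

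First I would compute $\nabla L$ by differentiating exactly as in the proof of \Cref{thm:unstructured}, now carrying the additional left factor $\hat{~X}_s$ through the Frobenius-adjoint identities of \Cref{thm:frobeniusproperties}; this gives $\nabla L(\bar{\tens{T}}) = -\sum_{s}\lr{\hat{~X}_s\trp\hat{~E}_s\hat{~Y}_s\trp}\otimes~\nu_s$ with residual $\hat{~E}_s = \hat{~Z}_s - \hat{~X}_s\lr{\bar{\tens{T}}~\nu_s}\hat{~Y}_s$. Using $\lr{\hat{~X}_s\trp\hat{~E}_s\hat{~Y}_s\trp}\trp = \hat{~Y}_s\hat{~E}_s\trp\hat{~X}_s$, the optimality condition becomes the tensorial equation $\sum_{s}\lr{\hat{~X}_s\trp\hat{~E}_s\hat{~Y}_s\trp + \sigma\,\hat{~Y}_s\hat{~E}_s\trp\hat{~X}_s}\otimes~\nu_s = ~0$. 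Substituting $\hat{~E}_s$ and separating the data terms from those linear in $\bar{\tens{T}}$, for each $s$ the part linear in $\bar{\tens{T}}$ reads $\hat{~X}_s\trp\hat{~X}_s\lr{\bar{\tens{T}}~\nu_s}\hat{~Y}_s\hat{~Y}_s\trp + \sigma\,\hat{~Y}_s\hat{~Y}_s\trp\lr{\bar{\tens{T}}~\nu_s}\trp\hat{~X}_s\trp\hat{~X}_s$. This is the single place the constraint enters: for $\bar{\tens{T}}\in S_\sigma$ one has $\lr{\bar{\tens{T}}~\nu_s}\trp = \sigma\lr{\bar{\tens{T}}~\nu_s}$, so $\sigma\lr{\bar{\tens{T}}~\nu_s}\trp = \lr{\bar{\tens{T}}~\nu_s}$ and the linear part symmetrizes to $\hat{~X}_s\trp\hat{~X}_s\lr{\bar{\tens{T}}~\nu_s}\hat{~Y}_s\hat{~Y}_s\trp + \hat{~Y}_s\hat{~Y}_s\trp\lr{\bar{\tens{T}}~\nu_s}\hat{~X}_s\trp\hat{~X}_s$, while the remaining (data) terms are $\hat{~X}_s\trp\hat{~Z}_s\hat{~Y}_s\trp \pm \hat{~Y}_s\hat{~Z}_s\trp\hat{~X}_s$.

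It remains to vectorize this tensorial identity. With $\vect\,\bar{\tens{T}} = \cvec_{12}\cvec_{12}\,\bar{\tens{T}}$, the elementary flattening rules $\vect\lr{\tens{M}\otimes~\nu} = ~\nu\otimes_K\cvec_{12}\tens{M}$ and $\cvec_{12}\lr{\bar{\tens{T}}~\nu_s} = \lr{~\nu_s\trp\otimes_K~I}\vect\,\bar{\tens{T}}$ (with $~I$ the $r^2\times r^2$ identity), together with the standard identity $\cvec_{12}\lr{~A~X~B} = \lr{~B\trp\otimes_K~A}\cvec_{12}~X$ applied to the symmetric matrices $\hat{~X}_s\trp\hat{~X}_s$ and $\hat{~Y}_s\hat{~Y}_s\trp$, turn the linear part for fixed $s$ into $\lr{~\nu_s\otimes_K\lr{\hat{~X}_s\trp\hat{~X}_s\baroplus_K\hat{~Y}_s\hat{~Y}_s\trp}}\lr{~\nu_s\trp\otimes_K~I}\vect\,\bar{\tens{T}}$, and the Kronecker mixed-product property collapses the outer factors to $~\nu_s~\nu_s\trp\otimes_K\lr{\hat{~X}_s\trp\hat{~X}_s\baroplus_K\hat{~Y}_s\hat{~Y}_s\trp}$; the data part vectorizes directly to $~\nu_s\otimes_K\cvec_{12}\lr{\hat{~X}_s\trp\hat{~Z}_s\hat{~Y}_s\trp\pm\hat{~Y}_s\hat{~Z}_s\trp\hat{~X}_s}$. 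Summing over $s$ yields precisely \cref{e:HOPINF}. Finally I would observe that \cref{e:HOPINF} indeed produces a constrained minimizer: its right-hand side has symmetric (resp.\ skew-symmetric) frontal slices, and its coefficient operator is assembled from symmetric Kronecker sums, which map the symmetric and skew-symmetric subspaces into themselves; consequently, projecting any solution of \cref{e:HOPINF} back onto $S_\sigma$ (i.e.\ (anti)symmetrizing its slices) again solves \cref{e:HOPINF} and lies in $S_\sigma$, hence is a genuine minimizer, and every minimizer arises this way.

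The step I expect to be the main obstacle is the vectorization bookkeeping --- in particular, arranging the partial flattenings and the Kronecker mixed-product identity so that the coefficient operator lands on exactly the symmetric Kronecker sum $\hat{~X}_s\trp\hat{~X}_s\baroplus_K\hat{~Y}_s\hat{~Y}_s\trp$ rather than some permuted variant, and keeping careful track that the transpose appearing in the projected optimality condition is the first-two-index tensor transpose, since it is precisely this that lets the feasibility relation $\lr{\bar{\tens{T}}~\nu_s}\trp = \sigma\lr{\bar{\tens{T}}~\nu_s}$ be used to recombine the two quadratic terms.
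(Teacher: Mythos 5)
Your proposal is correct and follows essentially the same route as the paper: the first-order conditions are (anti)symmetrized, the feasibility relation $\lr{\bar{\tens{T}}~\nu_s}\trp = \pm\lr{\bar{\tens{T}}~\nu_s}$ is used to recombine the two quadratic terms, and the resulting slice-wise Sylvester equation is vectorized twice via the vec trick into the stated Kronecker form. The only presentational difference is that you characterize constrained optimality by orthogonally projecting the gradient onto the (anti)symmetric subspace, whereas the paper introduces a tensor of Lagrange multipliers and eliminates it by the same (anti)symmetrization---these are equivalent---and your closing observation that the linear system's coefficient operator and right-hand side respect the (anti)symmetric subspace, so a feasible minimizer can be recovered from any solution, is a small sufficiency check the paper leaves implicit.
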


\begin{proof}
Let $~\Lambda\in\mathbb{R}^{r\times r\times p'}$ be a tensor of Lagrange multipliers and define the Lagrangian
\begin{align*}
    L\lr{\bar{\tens{T}},~\Lambda} = \frac{1}{2}\sum_{s=1}^{N_s}\nn{\hat{~Z}_s-\hat{~X}_s\lr{\bar{\tens{T}}~\nu_s}\hat{~Y}_s}^2 + \IP{~\Lambda}{\bar{\tens{T}}\mp \bar{\tens{T}}\trp} ,
\end{align*}
where, as before, $\bar{\tens{T}}\trp$ indicates transposition of the first two indices.
Differentiating this expression, it follows that
\begin{align*}
    dL\lr{\bar{\tens{T}},~\Lambda} &= -\sum_{s=1}^{N_s}\IP{\hat{~Z}_s-\hat{~X}_s\lr{\bar{\tens{T}}~\nu_s}\hat{~Y}_s}{\hat{~X}_s\lr{d\bar{\tens{T}}\,~\nu_s}\hat{~Y}_s} \\
    &\qquad \qquad+ \IP{d~\Lambda}{\bar{\tens{T}}\mp \bar{\tens{T}}\trp} + \IP{~\Lambda}{d\bar{\tens{T}}\mp d\bar{\tens{T}}}\trp \\
    &= \IP{d\bar{\tens{T}}}{~\Lambda\mp ~\Lambda\trp - \sum_{s=1}^{N_s}\hat{~X}_s\trp\lr{\hat{~Z}_s -\hat{~X}_s\lr{\bar{\tens{T}}~\nu_s}\hat{~Y}_s}\hat{~Y}_s\trp\otimes~\nu_s} \\
    &\qquad\qquad+ \IP{d\hat{~\Lambda}}{\bar{\tens{T}}\mp \bar{\tens{T}}\trp} \\
    &= \IP{d\bar{\tens{T}}}{\nabla_{\bar{\tens{T}}}L} + \IP{d~\Lambda}{\nabla_{~\Lambda}L}.
\end{align*}
Setting these gradients to zero yields the Euler--Lagrange equations for $L$,
\begin{subequations}
\begin{align}
    ~\Lambda\mp ~\Lambda\trp
    &= \sum_{s=1}^{N_s}\hat{~X}_s\trp\lr{\hat{~Z}_s-\hat{~X}_s\lr{\bar{\tens{T}}~\nu_s}\hat{~Y}_s}\hat{~Y}_s\trp\otimes~\nu_s \label{eq:Tlag}
    \\
    \bar{\tens{T}}\mp\bar{\tens{T}}\trp &= ~0. \label{eq:Tsym}
\end{align}
\end{subequations}
In particular, the Lagrange multiplier $~\Lambda$ can be eliminated by symmetrizing/anti-symmetrizing \cref{eq:Tlag}, leading to
\begin{align*}
    \begin{aligned}
        \sum_{s=1}^{N_s}\left[\hat{~X}_s\trp\lr{\hat{~Z}_s-\hat{~X}_s\lr{\bar{\tens{T}}~\mu_s}\hat{~Y}_s}\hat{~Y}_s\trp\right.\hspace{5cm} \\
        \pm \left. \hat{~Y}_s \lr{\hat{~Z}_s-\hat{~X}_s\lr{\bar{\tens{T}}~\nu_s}\hat{~Y}_s}\trp\hat{~X}_s\right]\otimes~\nu_s = ~0,
    \end{aligned}
\end{align*}
which, upon rearranging and applying~\cref{eq:Tsym}, yields the tensor equation
\begin{align*}
    \begin{aligned}
        \sum_{s=1}^{N_s} \lr{\hat{~X}_s\trp\hat{~X}_s\lr{\bar{\tens{T}}~\nu_s}\hat{~Y}_s\hat{~Y}_s\trp + \hat{~Y}_s\hat{~Y}_s\trp \lr{\bar{\tens{T}}~\nu_s}  \hat{~X}_s\trp\hat{~X}_s}\otimes~\nu_s \hspace{2cm} \\ =\sum_{s=1}^{N_s}\lr{\hat{~X}_s\trp\hat{~Z}_s\hat{~Y}_s\trp\pm \hat{~Y}_s\hat{~Z}_s\trp\hat{~X}_s}\otimes~\nu_s.
    \end{aligned}
\end{align*}
This is a Sylvester equation in the first two indices of $\bar{\tens{T}}$, so applying the ``vec trick''~\cite{schacke2004kronecker}
yields a partial vectorization of dimension $r^2\times p'$,
\begin{align*}
    \begin{aligned}
        \sum_{s=1}^{N_s}\lr{\hat{~X}_s\trp\hat{~X}_s\baroplus_K\hat{~Y}_s\hat{~Y}_s\trp}\,\cvec_{12}\lr{\bar{\tens{T}}}~\nu_s~\nu_s\trp =\sum_{s=1}^{N_s}\cvec_{12}\lr{\hat{~X}_s\trp\hat{~Z}_s\hat{~Y}_s\trp\pm\hat{~Y}_s\hat{~Z}_s\trp\hat{~X}_s}~\nu_s\trp.
    \end{aligned}
\end{align*}
Vectorizing again finally yields the equivalent matrix-vector system in the $r^2p'$ unknowns of $\vect\,\bar{\tens{T}}\coloneqq \cvec_{12}\lr{\cvec_{12}\,\bar{\tens{T}}}$,
\begin{align*}
    \begin{aligned}
        \sum_{s=1}^{N_s}\left[~\nu_s~\nu_s\trp\otimes_K\lr{\hat{~X}_s\trp\hat{~X}_s\baroplus_K\hat{~Y}_s\hat{~Y}_s\trp}\right]\,\vect\,\bar{\tens{T}}\hspace{4cm}\\
        = \sum_{s=1}^{N_s}~\nu_s\otimes_K\cvec_{12}\lr{\hat{~X}_s\trp\hat{~Z}_s\hat{~Y}_s\trp\pm \hat{~Y}_s\hat{~Z}_s\trp\hat{~X}_s},
    \end{aligned}
\end{align*}
since $\cvec_{12}\lr{~u~v\trp} = \rvec_{12}\lr{~v~u\trp} = ~v\otimes_K~u$.
\end{proof}

Solving \cref{e:HOPINF} with $~X_s = \hat{~J}$, $~Y_s = \hat{~Q}_s$, $~Z_s = \dot{\hat{~Q}}_s$, and $~\nu_s = ~\mu_s$ results in a symmetric tensor $\bar{\tens{T}}$ for defining the ROM~\cref{eq:hamiltonianopinfROM}.
An example implementation for setting up and solving \cref{e:HOPINF} is provided in \Cref{alg:normal_eqns_Hamiltonian}, which relies on a basic result about the exchange of Kronecker products and vectorization (see \cref{lem:slicktrick}).

\begin{algorithm}[t]
\caption{A NumPy/SciPy implementation for inferring a structured tensor $\bar{\tens{T}}=\pm\bar{\tens{T}}\trp$ using the linear system \cref{e:HOPINF} derived in \Cref{thm:popinf-Hamiltonian}.}
\label{alg:normal_eqns_Hamiltonian}
\vspace{.25cm}
\begin{minted}{python}
import numpy as np
import scipy.linalg as la

def infer_Tbar_with_symmetry(nus, Xs, Ys, Zs, symmetric: bool = True):
    p, Ns = nus.shape
    r, Nt, Ns = Ys.shape  # or Zs.shape

    XsTXs = np.einsum("kis,kjs->ijs", Xs, Xs)
    YsYsT = np.einsum("iks,jks->ijs", Ys, Ys)
    nusnusT = np.einsum("xs,ys->xys", nus, nus)

    Btsr = np.einsum("xys,ijs,kls->xyijkl", nusnusT, XsTXs, YsYsT)
    Btsr += Btsr.transpose(0, 1, 4, 5, 2, 3)  # tensorized Kronecker sum
    Bhat = Btsr.transpose(0, 2, 4, 1, 3, 5).reshape((r*r*p, r*r*p), order="C")

    Ctsr = np.einsum("kis,kas,jas,xs->ijx", Xs, Zs, Ys, nus)
    Ctsr += (1 if symmetric else -1) * Ctsr.transpose(1, 0, 2)
    Chat = Ctsr.flatten(order="F")

    vecT = la.solve(Bhat, Chat, assume_a="sym")
    return vecT.reshape((r, r, p), order="F")
\end{minted}
\vspace{0.25cm}
\end{algorithm}

\subsection{Example: Wave equation}\label{sec:wave}
This section describes an application with canonical Hamiltonian structure to which \Cref{alg:normal_eqns_Hamiltonian} may be applied.
Consider the following initial boundary value problem for a wave on the bounded domain $\Omega\subset\mathbb{R}^{d}$, $d\in\{1,2,3\}$, with variable wave speed:
\begin{align}\label{eq:wave}
    \left\{\phantom{-}
    \begin{aligned}
        &\ddot{y}(~x,t) = \nabla\cdot\lr{c(~x,~\mu)^2\nabla y(~x,t)}, & ~x \in \Omega \times (0,t_f],\\
        & y(~x,t) = 0, &~x \in \partial\Omega\times (0,t_f],\\
        & y(~x,0) = y_0(~x), &~x \in \Omega,\\
        & \dot{y}(~x, 0) = 0, & ~x\in \Omega.
    \end{aligned}
    \right.
\end{align}
It is straightforward to check that the system~\cref{eq:wave} is Hamiltonian with the canonical variables $q = y$, $p = \dot{y}$ and Hamiltonian functional
\begin{align*}
    H(q, p, ~\mu)
    = \frac{1}{2}\int_\Omega p^2(~x)\,d~x
    + \frac{1}{2}\int_\Omega c^2(~x,~\mu)\nn{\nabla q(~x)}^2\,d~x.
\end{align*}
Suppose $\Omega$ can be partitioned into non-overlapping subdomains $\Omega = \cup_{i=1}^p\Omega_i$ representing different materials or media with wave speeds. Then $c^{2}:\Omega\times \mathbb{R}^{p}\to\mathbb{R}$ is given by
\begin{align*}
    c(~x,~\mu)^{2}
    = \left[~x\in\Omega_1\right]\,\mu_1^2
    + \left[~x\in\Omega_2\right]\,\mu_2^2
    + \ldots
    + \left[~x\in\Omega_p\right]\,\mu_p^2,
    \qquad
    ~\mu = [\mu_1\,\,\mu_2\,\,\cdots\,\,\mu_p]\trp.
\end{align*}
With this choice, \cref{eq:wave} is a locally linear, but globally nonlinear, Hamiltonian wave equation whose solution propagates with different speeds in each subdomain.

An appropriate FOM for \cref{eq:wave} can be constructed via the Hamiltonian-preserving mixed finite element scheme from~\cite{SANCHEZ2021113843}.
Given a triangulation $\Omega_h = \{K_i\}_{i=1}^{N_E}$ of $\Omega$, define the spaces
\begin{align*}
    \begin{aligned}
        W_h &\coloneqq \{w_h \in L^2(\Omega): \left.w_h\right|_K = P_k(K), \forall K \in \Omega_h\}, \\
        V_h &\coloneqq \{~v_h \in H(\text{div},\Omega): \left.~v_h\right|_K = RT_k(K), \forall K \in \Omega_h; \left.~v_h\right|_{\partial \Omega} = 0\} ,
    \end{aligned}
\end{align*}
where $RT_k(K)$ is the order-$k$ Raviart--Thomas space $[P_k(K)]^2 \oplus ~x P_k(K) $ on element $K$ and $P_k(K)$ denotes the space of polynomials of degree $k$ on element $K$. The semi-discrete weak form can then be stated as follows: find $ (\dot q_h, \dot p_h) \in W_h \times W_h$ such that
\begin{subequations}\label{eq:mixedscheme}
\begin{align}
    \label{eq:qhpha}
    \lr{\dot q_h, w_h}_{\Omega_h}
    &= \lr{p_h,w_h}_{\Omega_h}
    \qquad\,\,\,\,
    \forall w_h \in W_h,
    \\
    \label{eq:qhphb}
    \lr{\dot p_h,w_h}_{\Omega_h}
    &= \lr{\nabla\cdot ~\sigma_h, w_h}_{\Omega_h}
    \quad
    \forall w_h \in W_h,
\end{align}
where $~\sigma_h \in V_h$ is a weak representation of $c^2\nabla q$ and therefore satisfies
\begin{align}\label{eq:sigmah}
    \lr{\frac{1}{c(~\mu)^2}~\sigma_h,~\xi_h}_{\Omega_h} + \lr{q_h,\nabla\cdot~\xi_h}_{\Omega_h} = 0 \qquad \forall ~\xi_h \in V_h.
\end{align}
\end{subequations}
The discrete Hamiltonian for this scheme is defined in analogy with the continuous case:
\begin{align}\label{eq:discreteH}
    H_h(q_h,p_h) = \frac{1}{2}\lr{p_h,p_h}_{\Omega_h} + \frac{1}{2}\lr{\frac{1}{c(~\mu)^2} ~\sigma_h, ~\sigma_h}_{\Omega_h}.
\end{align}
The mixed finite-element scheme~\cref{eq:mixedscheme} can be written in a matrix-vector form suitable for tensorization. It is shown in \cref{app:systems} that using $~M_V$ and $~M_W$ to denote the mass matrices associated with the spaces $V_h$ and $W_h$, respectively, and collecting $N$ degrees of freedom of finite element approximations of $q_h$ and $p_h$ into the vector $~y = [\,~q\trp\enspace~p\trp\,]\trp\in\mathbb{R}^{2N}$,
the system formed from \cref{eq:mixedscheme}--\cref{eq:sigmah} can be expressed as
\begin{align}
    \label{eq:blockqp}
    \begin{aligned}
    \dot{~y}
    = \left[\begin{array}{c}
        \dot{~q}\\
        \dot{~p}
    \end{array}\right]
    &= \left[\begin{array}{ c  c }
    ~0 & ~I \\
    -~I & ~0
    \end{array}\right]\left[\begin{array}{ c  c }
        ~M_W^{-1}~S\trp~M_V(~\mu)^{-1}~S & ~0 \\
        ~0 & ~I
    \end{array}\right]
    \left[\begin{array}{c}
        ~q\\
        ~p
    \end{array}\right]
= ~J~A\lr{~\mu}~y.
    \end{aligned}
\end{align}
Importantly, the mass matrix $~M_V(~\mu)$ depends affinely on $~\mu^{-2} = [\,\mu_1^{-2}\,\cdots\,\mu_p^{-2}\,]\trp\in\mathbb{R}^{p}$
and $~A(~\mu) = \mathrm{blockdiag}(~A_1(~\mu), ~I)$ where $~A_1(~\mu) = ~M_W^{-1}~S\trp~M_V(~\mu)^{-1}~S\in\mathbb{R}^{N\times N}$.
This is a canonical Hamiltonian system of the form \cref{eq:mgradHamiltonian} or, equivalently, \cref{eq:HODE}, with respect to the joint mass matrix $~M = \mathrm{blockdiag}(~M_W,~M_W)$ and the discrete Hamiltonian
\begin{align*}
    \begin{aligned}
        H_h(~y,~\mu)
        &= \frac{1}{2}\langle ~p, ~p \rangle_{~M_W} + \frac{1}{2}\langle ~q, ~M_W^{-1}~S\trp ~M_V(~\mu)^{-1} ~S~q \rangle_{~M_W} = \frac{1}{2}\left\langle~y,~A(~\mu)~y\right\rangle_{~M},
    \end{aligned}
\end{align*}
where $\langle ~a, ~b \rangle_{~M_W} = ~a\trp ~M_W ~b$ denotes the discrete inner product on $W_h$.

Because the affine parametric expansion of $~M_V(~\mu)$ is in terms of $~\mu^{-2}$, it is reasonable to approximate the parametric dependence of $~A_1(~\mu)$ as affine in the element-wise square $~\mu^2 = [\,\mu_1^2\,\cdots\,\mu_p^2\,]\trp$ so that $~A_1(~\mu)\approx\tens{T}_1~\mu^2$ for some constant tensor $\tens{T}_1\in\mathbb{R}^{N\times N\times p}$.
This also defines a tensor $\tens{T}\in\mathbb{R}^{2N \times 2N \times p}$ with $\tens{T}~\mu' \approx ~A(~\mu)$ with $~\mu' = [\,(~\mu^2)\trp,\, 1]\trp\in\mathbb{R}^{p+1}$ and entries
\begin{align*}
    \mathrm{T}_{ijk}
    = [k\neq 1][i\leq N][j\leq N]\,(\tens{T}_1)_{ijk} + [k=1][i>N][j=i],
\end{align*}
which amounts to inserting the $N\times N$ identity alongside the approximation for $~A_1(~\mu)$ through the last index of $\tens{T}$.
It follows that the model $\dot{~y}=~J(\tens{T}~\mu')~y$ is an affine parametric approximation to the original wave equation \cref{eq:wave} which retains its Hamiltonian structure, provided that $\tens{T}^*=\tens{T}$ is self-adjoint in its first two indices.

\begin{remark}
    It is possible to express the Hamiltonian FEM scheme \cref{eq:mixedscheme}--\cref{eq:discreteH} in a way that exactly retains the affine parametric structure in \cref{eq:sigmah} from $~M_V(~\mu)$. However, doing so requires simulating the intermediate variable $~\sigma_h$ and precludes the standard Hamiltonian expression $\dot{~y}=~J\Mnabla H(~y)$.  We have chosen to retain the approximate affine dependence $~A_1(~\mu)\approx\tens{T}_1~\mu^2$ both for this reason and because it enables a demonstration of the proposed OpInf procedure in a case where the parametric dependence in the FOM is not fully affine.
\end{remark}

Given an $~M$-orthonormal, $~J$-equivariant basis $~U\in\mathbb{R}^{2N\times 2r}$,
\Cref{alg:normal_eqns_Hamiltonian} can be applied to learn a reduced model
\begin{equation}\label{eq:hamOpInfROM1}
    \dot{\hat{~y}}
    = ~J\lr{\bar{\tens{T}}~\mu'}\hat{~y},
    \qquad
    \hat{~y}(0,~\mu)
    = ~U^*~y_0(~\mu),
\end{equation}
in terms of $\bar{\tens{T}}=\bar{\tens{T}}\trp$.
Alternatively, since the parametric dependence of $~A(~\mu)$ appears only in its upper-left block via $~A_1(~\mu)$, one may construct a block-structured ROM
\begin{align}
    \label{eq:hamOpInfROM2}
    \dot{\hat{~y}}
    = \left[\begin{array}{c}
        \dot{\hat{~q}}\\
        \dot{\hat{~p}}
    \end{array}\right]
    =
    \left[\begin{array}{ c  c }
        ~0 & ~I \\
        -~I & ~0
    \end{array}\right]\left[\begin{array}{ c  c }
        \bar{\tens{T}}_1~\mu & ~0 \\
        ~0 & \bar{~A}_2
    \end{array}\right]
    \left[\begin{array}{c}
        \hat{~q}\\
        \hat{~p}
    \end{array}\right],
    \qquad
    \hat{~y}(0,~\mu)
= ~U^*~y_0(~\mu),
\end{align}
which requires a symmetric tensor $\bar{\tens{T}}_1=\bar{\tens{T}}_1\trp$ inferred with \cref{alg:normal_eqns_Hamiltonian} and a symmetric matrix $\bar{~A}_2=\bar{~A}_2\trp$ inferred with standard Hamiltonian OpInf \cite{GRUBER2023116334,sharma2022hamiltonian}.
The numerical results reported in \Cref{sec:numerics} use \cref{eq:hamOpInfROM2}, but additional experiments were performed using the more general \cref{eq:hamOpInfROM1} without significant or qualitative differences to the outcomes.

 \section{Numerical results}
\label{sec:numerics}
This section applies tensor parametric OpInf to the parameterized heat and wave equation models introduced in earlier sections.
Numerical experiments for each problem are conducted in one and two spatial dimensions, comparing both intrusive and non-intrusive ROMs. Accuracy is assessed using the relative $L^2$-error in state approximations: if $~Q_s^\text{FOM}$ and $~Q_s^\text{ROM}$ are matrices of the FOM and ROM solutions, respectively, at the same parameter value and over $N_t$ time instances with uniform time step $\Delta t > 0$, the relative error for a collection of $\tilde{N}_s$ ROM solutions $~Q^\text{ROM} = \{~Q_{1}^\text{ROM}, \ldots, ~Q_{\tilde{N}_s}^\text{ROM}\}$ compared to the FOM solutions $~Q^\text{FOM} = \{~Q_{1}^\text{FOM}, \ldots, ~Q_{\tilde{N}_s}^\text{FOM}\}$ is given by
\begin{align}
    \label{eq:l2err}
    RL^2\left(~Q^\text{FOM},~Q^\text{ROM}\right)
    = \sqrt{\frac{\sum_{s} \|~Q_{s}^\text{FOM} - ~Q_{s}^\text{ROM}\|_{~M}^2}{
    \sum_{s} \|~Q_{s}^\text{FOM}\|_{~M}^2}},
    \quad
    \|~Q\|_{~M}^2 = \mathrm{trace}(~Q\trp~M~Q).
\end{align}
For a given basis $~U$, the projection error is defined by replacing $~Q_s^\text{ROM}$ with $~U~U^*~Q_s^\text{FOM}$ in~\cref{eq:l2err}.
We also examine the signed elementwise errors $(~Q^\text{FOM}_{s})_{ij} - (~Q^\text{ROM}_{s})_{ij}$ and corresponding absolute errors for select trajectories.  As discussed in \Cref{rem:regularization}, we have chosen not to explicitly regularize the OpInf learning problems; further performance improvements, such as improvements in numerical conditioning, are possible with a careful choice of regularization parameter.
Codes to reproduce the numerical experiments are publicly available at \href{https://github.com/sandialabs/HamiltonianOpInf/tree/parametric}{\texttt{github.com/sandialabs/HamiltonianOpInf/tree/parametric}}.

\subsection{Heat equation}
This section applies the structure-agnostic tensor parametric OpInf methods of \Cref{sec:generic} to the parameterized heat equation~\cref{eq:heat}. An $~M$-orthonormal basis matrix $~U \in \mathbb R^{N\times r}$ is constructed through weighted proper orthogonal decomposition (POD) \cite{berkooz1993pod,graham1999vortexsheddingPOD,sirovich1987turbulence} as follows.
Let $~Y = [~R~Q_1 \enspace ~R~Q_2 \enspace \cdots \enspace ~R~Q_{N_s}] \in \mathbb R^{N\times N_tN_s}$, where $~Q_s$ is the FOM state snapshot matrix corresponding to training parameter value $~\mu_s$ and $~R$ is the upper triangular Cholesky factor of the mass matrix, $~R\trp ~R = ~M$.
For a fixed reduced dimension $r \ll N$, the basis matrix is then given by $~U = ~R^{-1}\tilde{~U}_{r}$, where $~Y = \tilde{~U}~\Sigma ~V\trp$ is the singular value decomposition (SVD) of $~Y$ and $\tilde{~U}_{r} = \tilde{~U}_{:,:r}$ denotes the first $r$ columns of $\tilde{~U}$ (the $r$ principal left singular vectors of $~Y$).
It follows that $~U^*~U = ~U\trp ~M ~U = ~U\trp ~R\trp ~R ~U = \tilde{~U}_{r}\trp \tilde{~U}_{r} = ~I$, which is the desired orthonormality condition. \cref{alg:normal_eqns_unstructured}, which relies on solving the normal equations, and \cref{alg:lstsq_unstructured}, which employs an SVD-based least-squares solver, are applied to learn the tensor operator $\bar{\tens{T}}\approx\hat{\tens{T}}$ in \eqref{eq:rODE-opinf}.  The results are denoted ``OpInf (normal)'' and ``OpInf (lstsq)'', respectively.

\subsubsection{One spatial dimension}

First, consider the system~\cref{eq:heat} on the one-dimensional domain $\Omega = (0, 2\pi)$
with the parameterized heat conductivity coefficient
\begin{align*}
    c(x, ~\mu)
    = \Big[x \in \left(0, \tfrac{2\pi}{3}\right)\Big] \mu_1
    + \Big[x \in \left(\tfrac{2\pi}{3}, \tfrac{4\pi}{3}\right)\Big] \mu_2
    + \Big[x \in \left(\tfrac{4\pi}{3}, 2\pi\right)\Big] \mu_3,
    \qquad
    ~\mu
\in \mathbb{R}^3.
\end{align*}
The mesh $\Omega_h$ consists of $1{,}000$ uniform elements for a total of $N=999$ spatial degrees of freedom. To generate snapshot data for training, the FOM is integrated in time from $t_0 = 0$ using an implicit multi-step variable-order method, available in Python as \texttt{scipy.integrate.BDF} \cite{2020SciPy-NMeth}, until final time $t_f = 8$. The states are recorded at uniform intervals with step size $\Delta t = 0.008$, resulting in $N_t = 1{,}001$ snapshots. The initial condition for this experiment is chosen to be
\begin{align*}
    q(x,0) = \exp(-(x-\pi)^2)\sin\lr{\frac{x}{2}},
\end{align*}
which is independent of $~\mu$.
Snapshot data are generated randomly over $N_s = 80$ training parameters and $20$ testing parameters log-uniformly sampled from the interval $(0.01,1)^3$, and snapshot time derivatives are estimated with second-order finite differences.  Note that the chosen parameter sampling creates sharp kinks in the solution at the discontinuities of the conductivity coefficient $c$, provided that the components of $~\mu$ are widely separated (c.f.~\Cref{fig:1dheatsol}).  The ROMs are integrated in time using the same implicit method as the FOM to generate reduced state approximations.

\begin{figure}
    \centering
    \includegraphics[width=\linewidth]{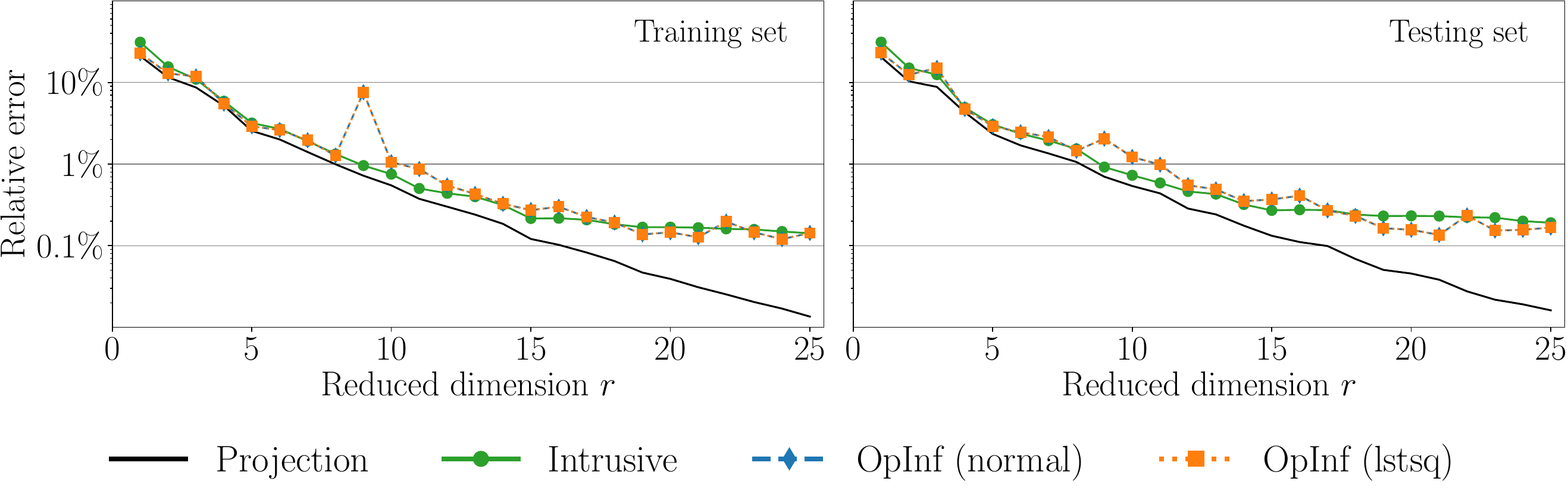}
    \caption{Relative $L^2$ error in ROM solutions to the 1D heat system and the projection error of the FOM snapshots as a function of the reduced dimension $r$, over all training parameter values (left) and testing parameter values (right).
}
    \label{fig:1dheattraintest}
\end{figure}

\Cref{fig:1dheattraintest} shows the relative error~\cref{eq:l2err} in the ROM solutions, along with the projection error, as functions of the basis size $r$ over all training and testing snapshots, respectively.
Over the training snapshots, the two OpInf ROMs exhibit identical error profiles, closely matching and sometimes improving upon that of the intrusive ROM. A small exception to this trend occurs at $r = 9$, where the OpInf ROM performs poorly for a small number of training parameters: in this case, the relative error is about $75.85\%$ at $~\mu \approx [0.011, 0.977, 0.422]\trp$ near the edge of the parameter domain, $19.25\%$ and $6.73\%$ at two other parameters, and less than $3\%$ at all other training parameters, with a median error of $1.06\%$. Differences between the classical and data-driven ROMs are expected, as OpInf generally does not pre-asymptotically recover the intrusive Galerkin ROM. Therefore, closure error in the learned operator can both improve (in the short term) and degrade ROM accuracy in the finite-data regime~\cite{gruber2024vc,peherstorfer2020sampling}. Similar error profiles are observed over the test snapshots, with the ROM error leveling off at about $0.2\%$ as $r$ increases.

\begin{figure}
    \centering
    \includegraphics[width=\linewidth]{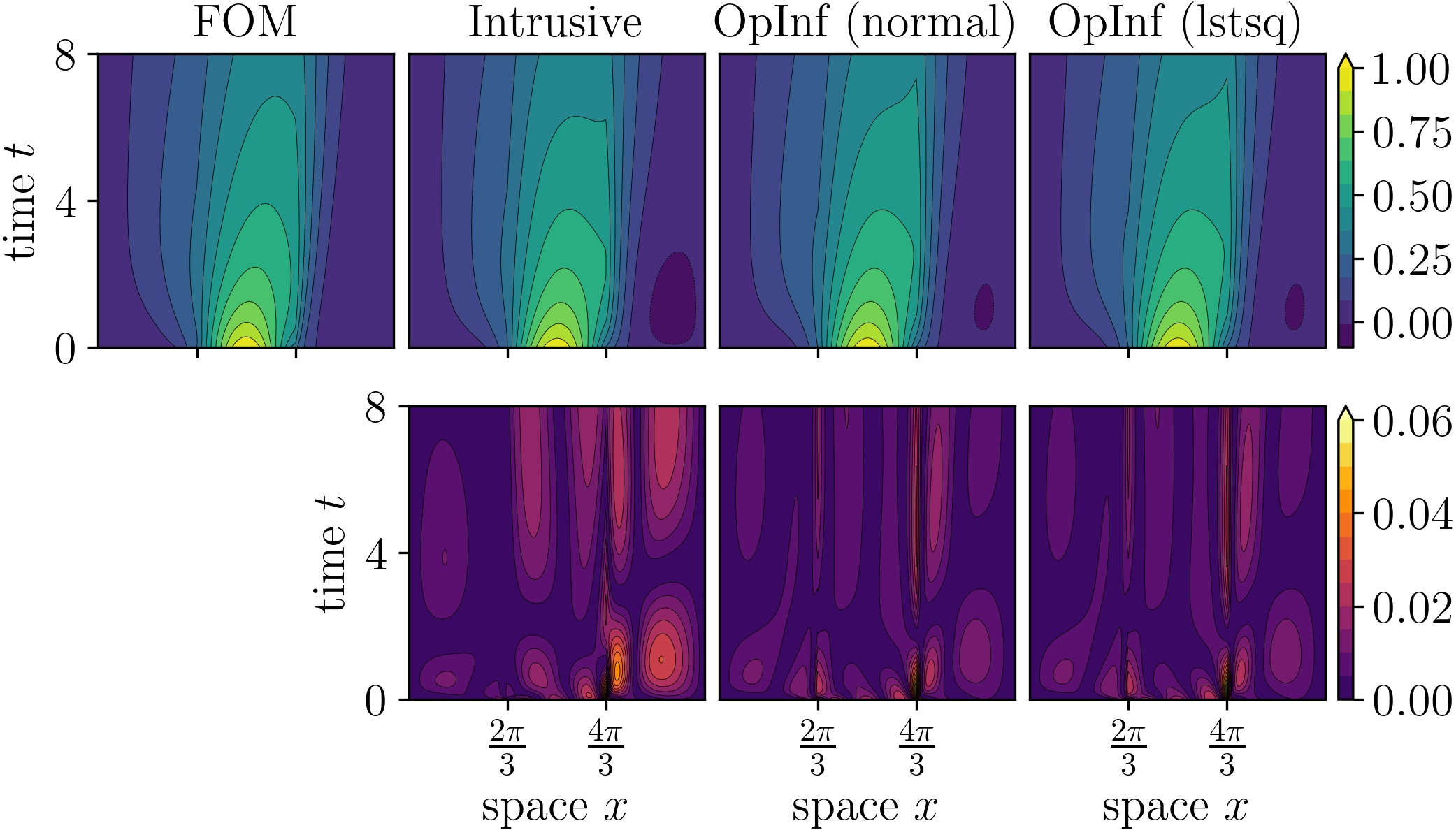}
    \caption{
    Top row: space-time contour plots of FOM and ROM solutions to the 1D heat equation. Bottom row: pointwise absolute error in the ROM solutions. The reduced dimension is $r=6$ and the parameters are $~\mu \approx [0.453, 0.163, 0.031]\trp$, which is not in the training set.
    }
    \label{fig:1dheatsol}
\end{figure}

A closer look is provided by \Cref{fig:1dheatsol}, which displays space-time contour plots of the FOM solution alongside solutions of ROMs with dimensionality $r=6$ at a single testing parameter.  The two OpInf ROMs have identical error profiles; indeed, the operators learned by the two approaches match nearly to machine precision, indicating that numerical conditioning is not problematic in this experiment. For all ROMs, errors are largest near $x = 2\pi/3$ and $x=4\pi/3$, the points in the domain where the conductivity coefficient is discontinuous. The relative errors in the ROM solutions are approximately $2.26\%$ for the OpInf ROMs and $3.34\%$ for the intrusive ROMs.

\subsubsection{Two spatial dimensions}

Consider now the square two-dimensional spatial domain $\Omega = (0, 2\pi) \times (0, 2\pi)$.
In this experiment, the parameterized heat conductivity coefficient is
\begin{align*}
    \begin{aligned}
    c(~x,~\mu) = \Bigl[ ~x \in (0,\pi)^2 \Bigr]\mu_1
    &+ \Bigl[ ~x \in (\pi,2\pi) \times (0,\pi) \Bigr]\mu_2\\
    &+ \Bigl[ ~x \in (0,\pi) \times (\pi, 2\pi) \Bigr]\mu_3
    + \Bigl[ ~x \in (\pi,2\pi)^2 \Bigr]\mu_4,
    \end{aligned}
    \qquad ~\mu \in \mathbb R^4.
\end{align*}
The mesh $\Omega_h$ is unstructured and contains $4{,}116$
simplicial elements, yielding $N=1{,}971$ spatial degrees of freedom independent of the boundary conditions of the problem. The terminal time is set to $t_f = 4$, with a time-step size of $\Delta t = 0.002$, resulting in $N_t=1{,}001$ points in time. As before, both the FOM and ROMs are time-integrated using SciPy's implicit multi-step method.
The initial condition is defined to be
\begin{align*}
    q(~x,0) =  \exp\big(-(x_1-\pi)^2-\-(x_2-\pi)^2\big)\sin\lr{\frac{x_1}{2}}\sin\lr{\frac{x_2}{2}}.
\end{align*}
Training data are generated over $N_s=80$ parameter values, while the test data consists of $20$ parameter values, all of which have entries randomly and log-uniformly sampled from $(0.1,1.0)^4$.

\begin{figure}
    \centering
    \includegraphics[width=\linewidth]{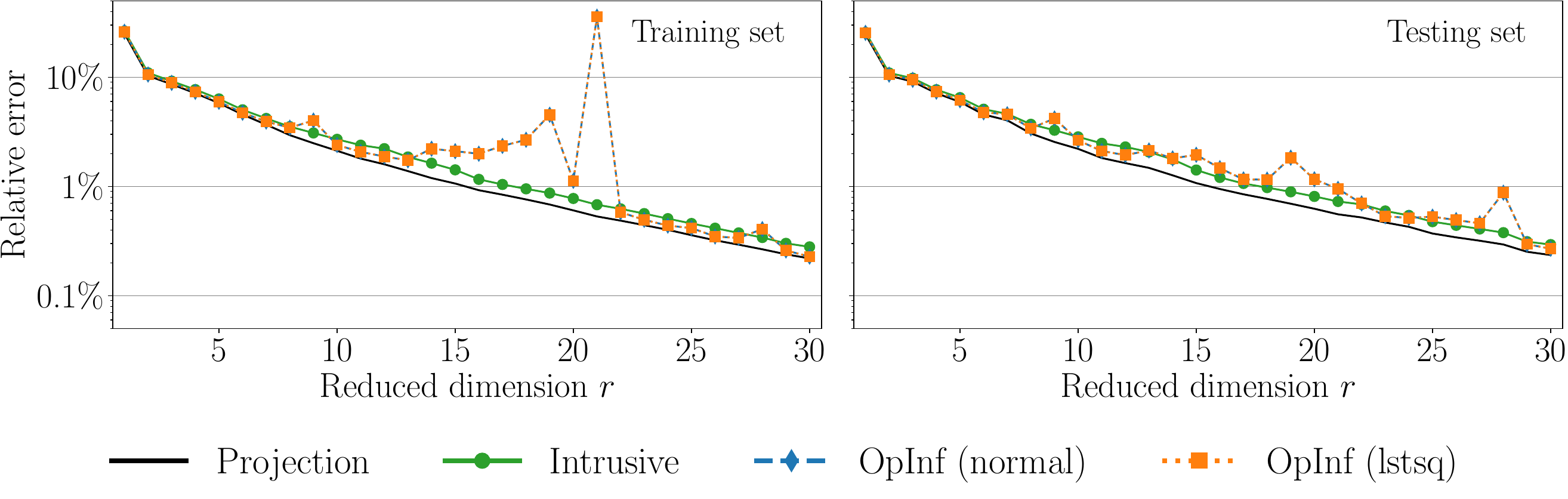}
    \caption{Relative $L^2$ error in ROM solutions to the 2D heat system and the projection error of the FOM snapshots as a function of the reduced dimension $r$, over all training parameter values (left) and testing parameter values (right).}
    \label{fig:2dheattraintest}
\end{figure}

\begin{figure}
    \centering
    \begin{subfigure}[b]{\textwidth}
        \centering
        \begin{minipage}{0.32\textwidth}
            \centering
            \phantom{)---}$t=0$\phantom{(}
            \includegraphics[width=\textwidth,trim=4cm 12cm 0 0, clip]{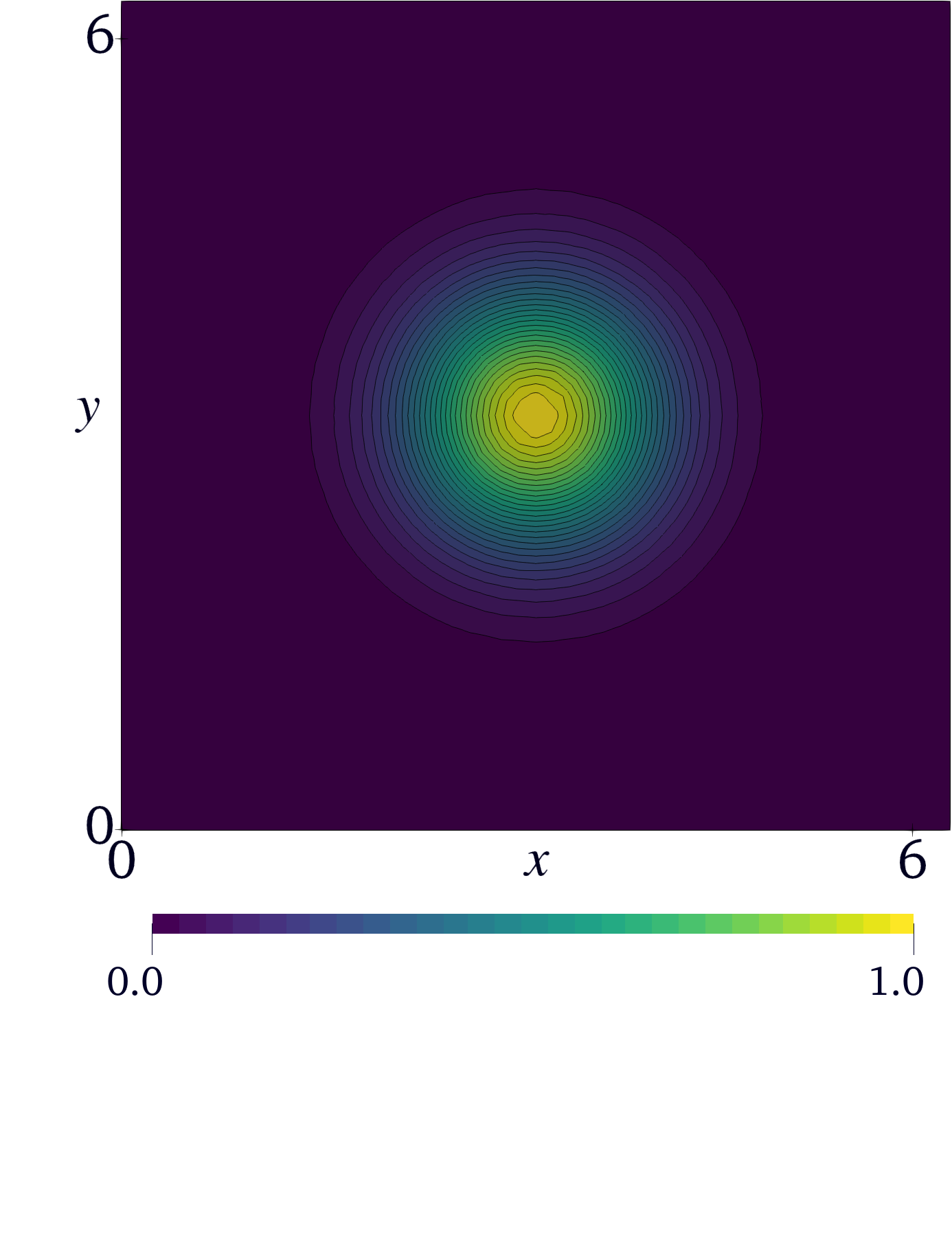}
            \vspace{0.2cm}
        \end{minipage}\begin{minipage}{0.32\textwidth}
            \centering
            \phantom{)---}$t=2$\phantom{(}
            \includegraphics[width=\textwidth,trim=4cm 12cm 0 0, clip]{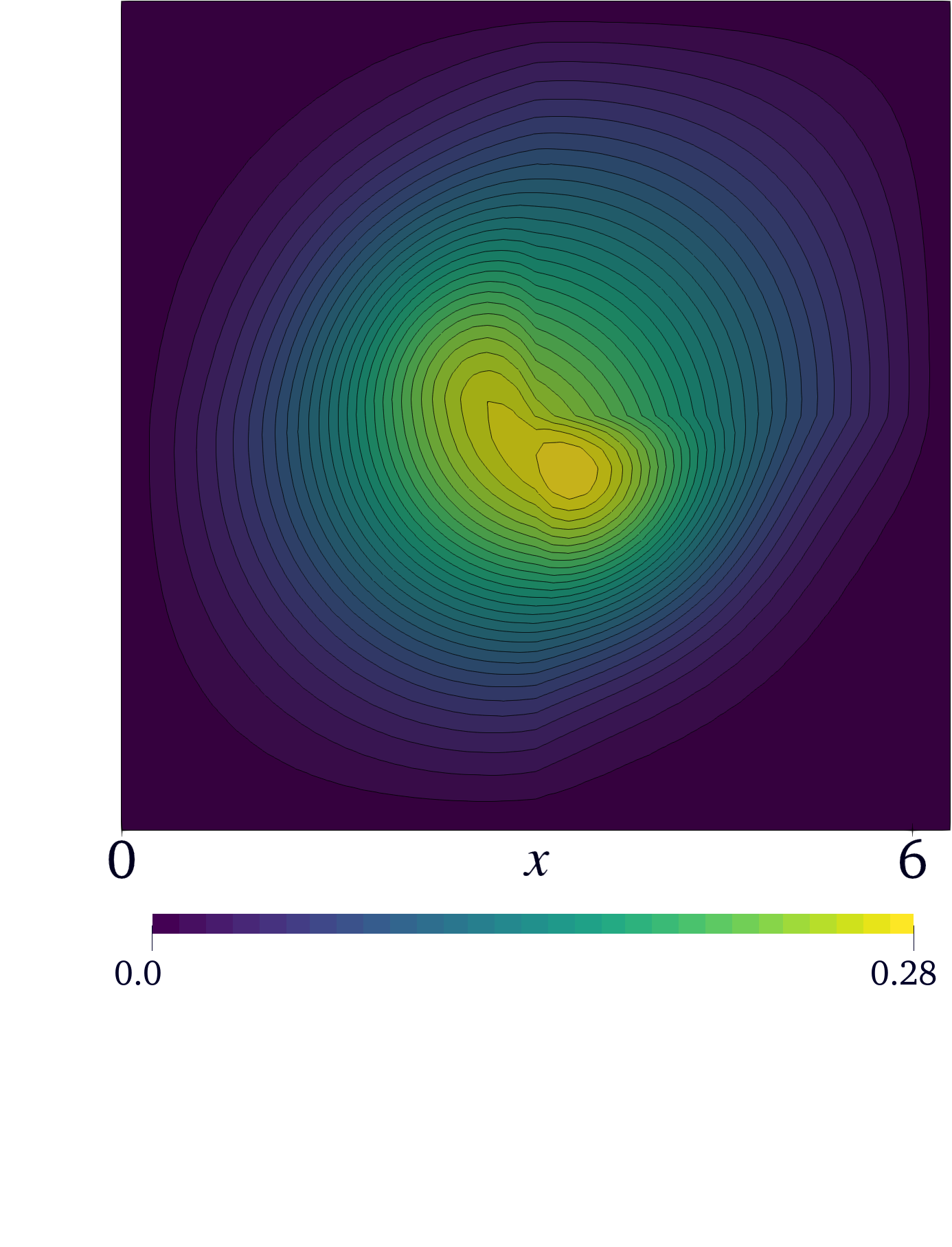}
            \vspace{0.2cm}
        \end{minipage}\begin{minipage}{0.32 \textwidth}
            \centering
            \phantom{)---}$t=4$\phantom{(}
            \includegraphics[width=\textwidth,trim=4cm 12cm 0 0, clip]{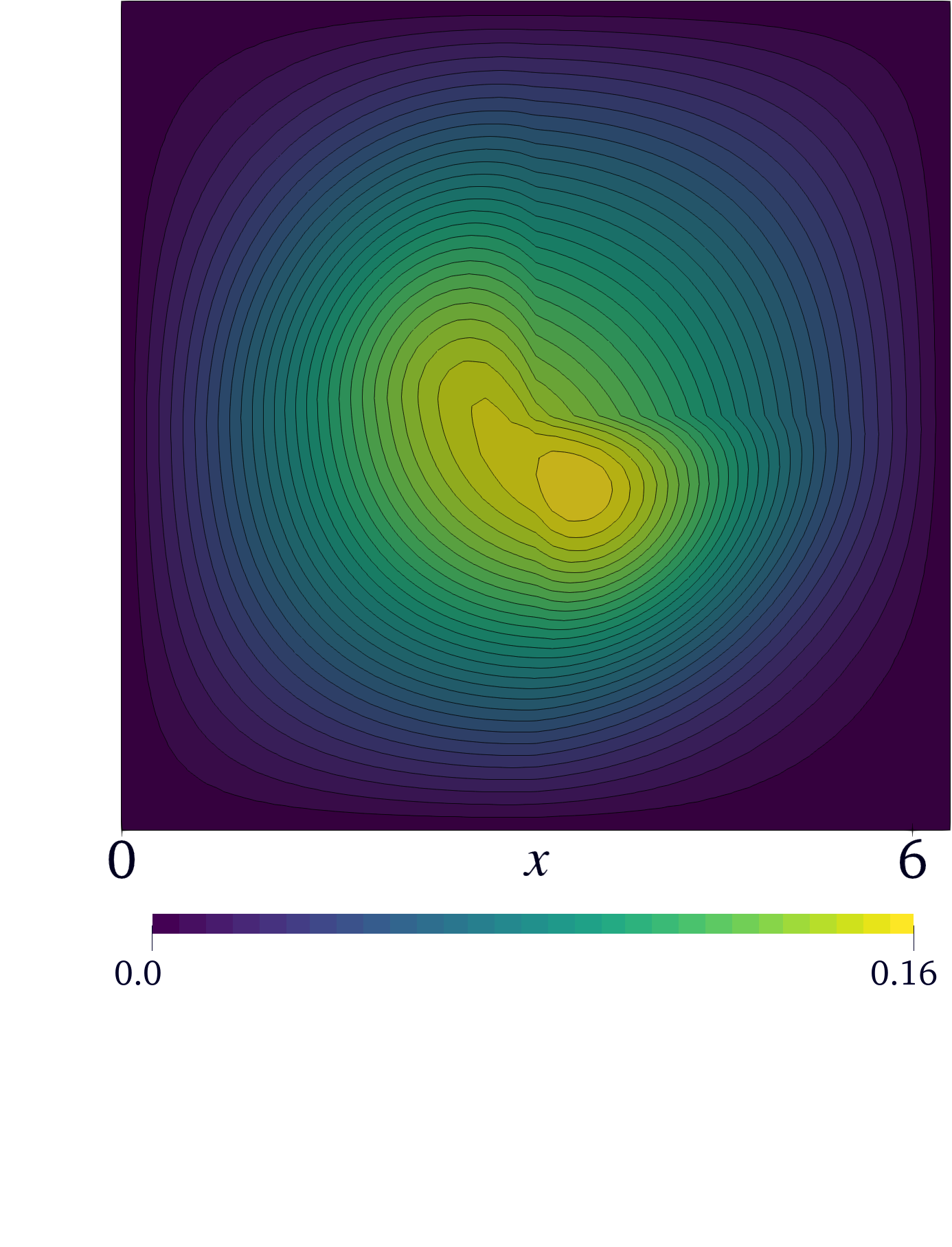}
            \vspace{0.2cm}
        \end{minipage}
\end{subfigure}\label{fig:2dheatsol}
    \\[-12pt]
    \begin{subfigure}[b]{\textwidth}
        \centering
        \begin{minipage}{0.32\textwidth}
            \centering
            \phantom{)---}Intrusive\phantom{(}
            \includegraphics[width=\textwidth, trim=4cm 12cm 0 0, clip]{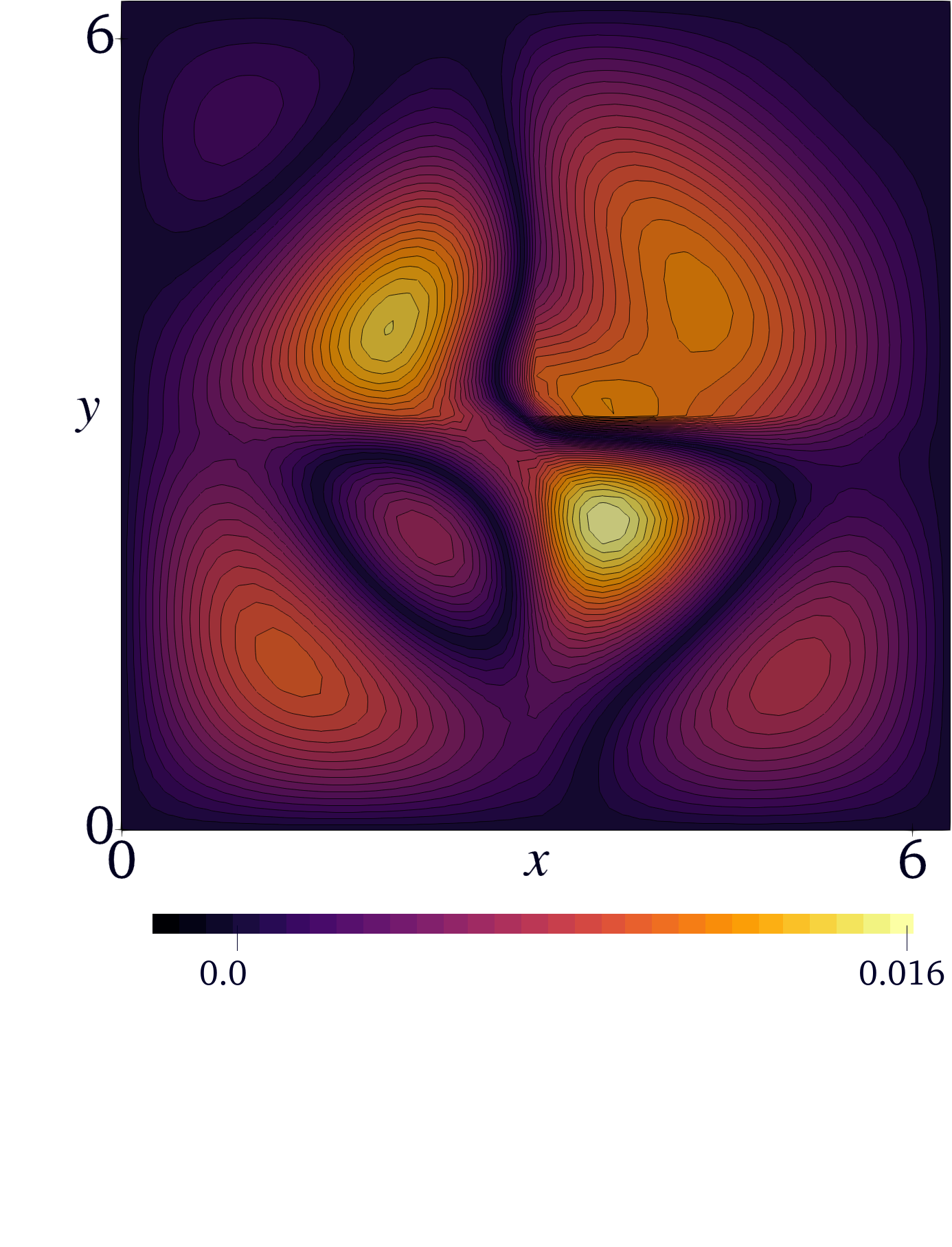}
            \vspace{0.2cm}
        \end{minipage}
        \begin{minipage}{0.32\textwidth}
            \centering
            \phantom{)---}OpInf (normal)\phantom{(}
            \includegraphics[width=\textwidth, trim=4cm 12cm 0 0, clip]{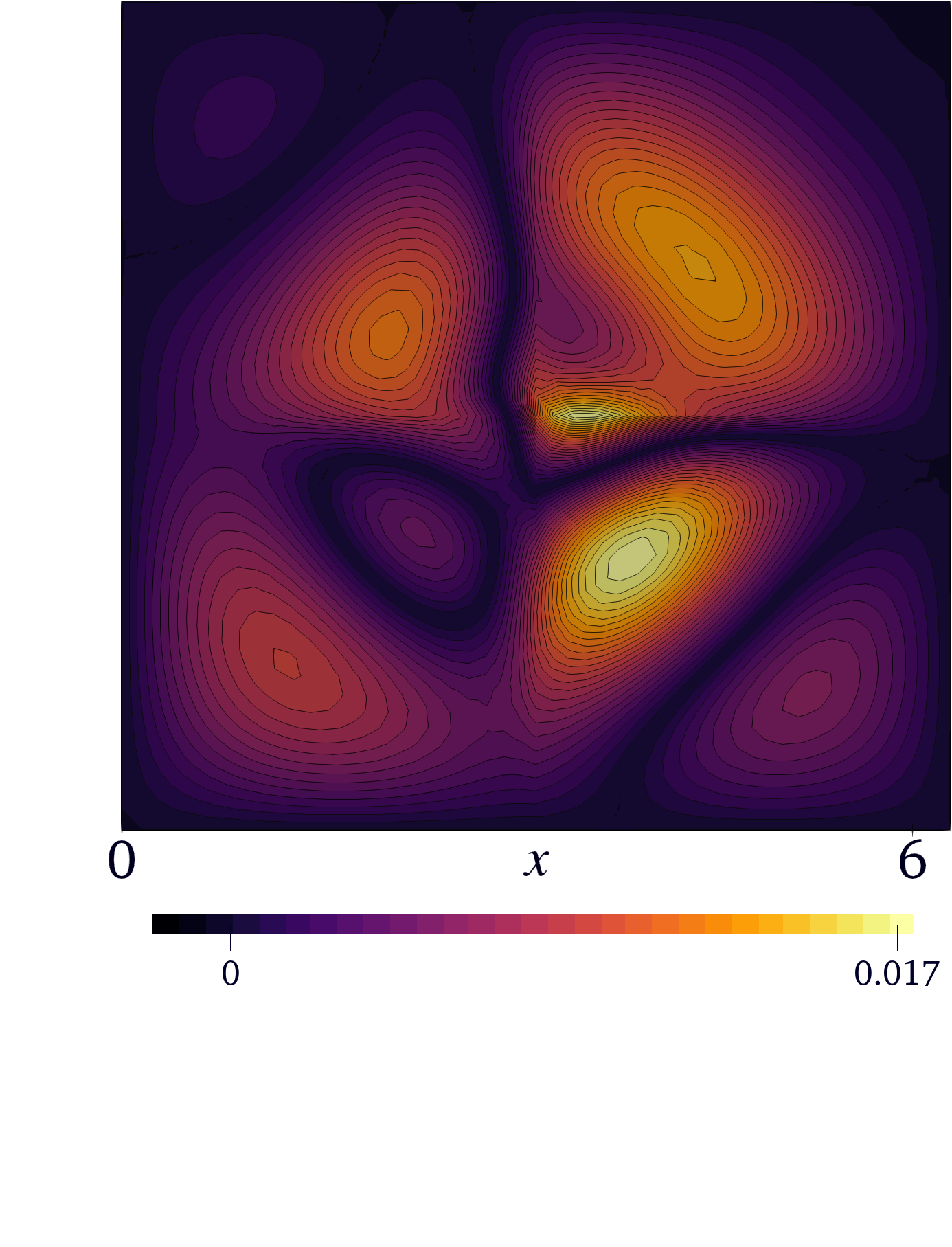}
            \vspace{0.2cm}
\end{minipage}\begin{minipage}{0.32\textwidth}
            \centering
            \phantom{)---}OpInf (SVD)\phantom{(}
            \includegraphics[width=\textwidth, trim=4cm 12cm 0 0, clip]{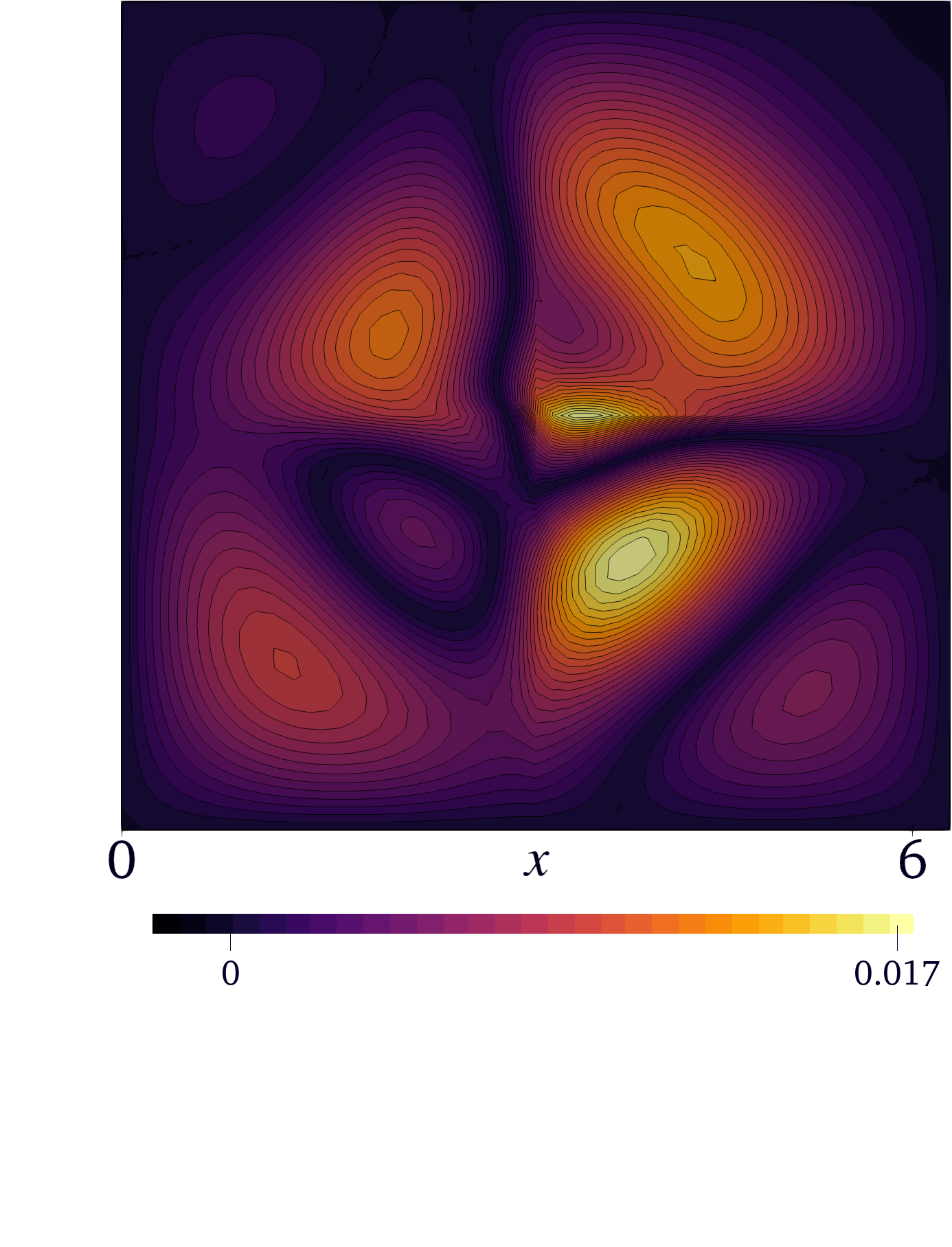}
            \vspace{0.2cm}
        \end{minipage}\label{fig:2dheaterr}
    \end{subfigure}
    \vspace{-0.75cm}
    \caption{FOM solution to the 2D heat system at different times (top) and pointwise absolute error in the ROM solutions at $t=4$ (bottom). The reduced state dimension is $r=8$ and the parameters are $~\mu\approx [0.319, 0.117, 0.179, 0.686]\trp$.
    }
    \label{fig:2dheat}
\end{figure}

\Cref{fig:2dheattraintest} displays the relative $L^2$-errors in the ROM solutions, along with the projection error, over the training and testing snapshots. As in the previous experiment, the OpInf ROMs produced by \Cref{alg:normal_eqns_unstructured} and \Cref{alg:lstsq_unstructured} are numerically equivalent, indicating reasonable conditioning in the OpInf problem.
As before, the OpInf ROM errors generally decrease as $r$ increases, with a few exceptions. In particular, at $r = 21$, the OpInf ROM training errors jump due to a large error at a single parameter value. For most choices of $r$, the OpInf ROM is comparable to the Galerkin ROM and nearly achieves the projection error.
\Cref{fig:2dheat} shows the time evolution of the FOM solution for a single testing parameter, as well as the corresponding absolute errors in the ROM solutions at the terminal time, with $r=8$. In this case, the OpInf ROMs produce solutions that are slightly more accurate, achieving a relative error of 4.19\%, compared to 4.55\% for the intrusive ROM solution.

\subsubsection{Symmetry and time derivative error}

\begin{figure}
    \centering
    \includegraphics[width=\textwidth]{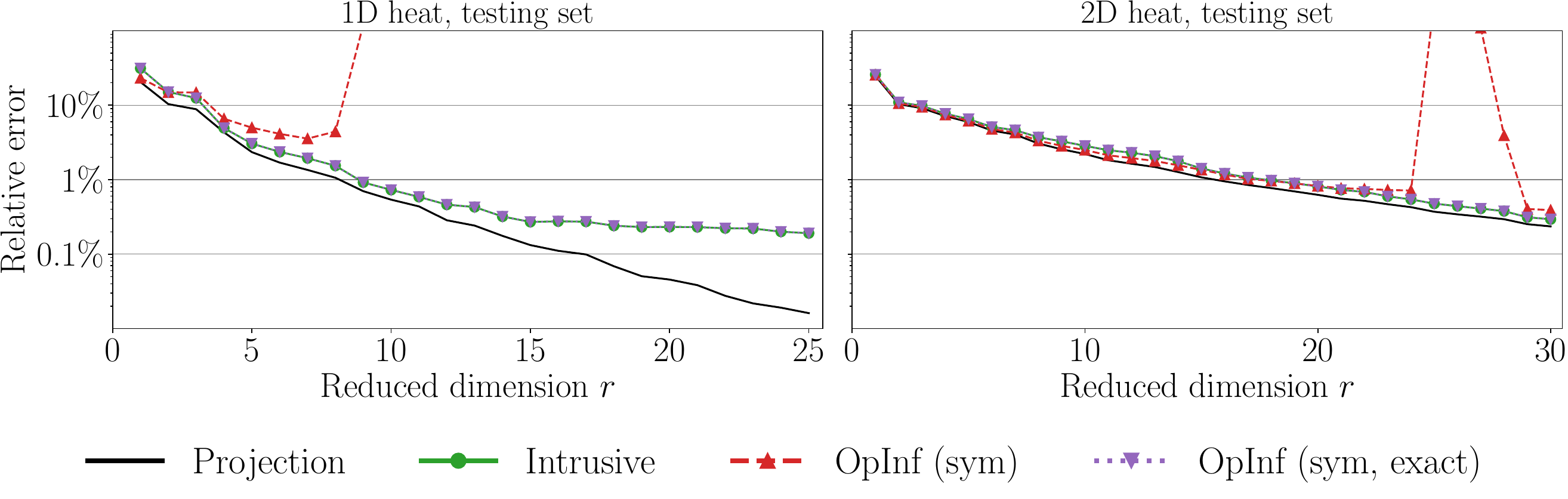}
    \caption{Relative testing error of symmetric ROMs for the heat problems in 1D (left) and 2D (right), as well as the projection error of the FOM snapshots, as a function of the reduced dimension $r$. The OpInf ROMs are inferred with \Cref{alg:normal_eqns_Hamiltonian}. When estimated time derivative data are used, the OpInf ROMs become unstable for some choices of $r$; with exact time derivative data, the inference recovers the intrusive ROM.}
    \label{fig:heat-symmetry}
\end{figure}

The symmetry-preserving inference in \Cref{alg:normal_eqns_Hamiltonian} is equally applicable to the present case of the heat equation, since the tensor operator $\hat{\tens{T}}$ governing the intrusive Galerkin ROM is guaranteed to be symmetric in its first two indices.  However, this restricts the space of allowable solutions to the OpInf problem by design, which may not be advantageous when there is unremovable closure error introduced by time derivative approximation.  \Cref{fig:heat-symmetry} displays the results of performing \Cref{alg:normal_eqns_Hamiltonian} on the 1-D and 2-D heat problems using both approximate and true time derivatives.
In the latter case, the true time derivative governing the Galerkin ROM is given by
\begin{align}
    \dot{\hat{~Q}}_{s}
    = ~U^{*}~A(~\mu_s)~U~U^{*}~Q_s.
\end{align}
Interestingly, for the heat problem, there is no obvious benefit to enforcing symmetry on the learned tensor operator when time derivatives are approximated.  This suggests that the closure error introduced by time derivative approximation is sufficiently disruptive to negate the benefits of the restriction bias imposed by symmetry, at least when no additional regularization is employed (see \Cref{rem:regularization}).  Conversely, the symmetry-preserving \Cref{alg:normal_eqns_Hamiltonian} exactly (or near-exactly) recovers the symmetric intrusive operator when exact (or near-exact, as in \cite{peherstorfer2020sampling}) time derivatives are employed.  This suggests that the benefits of symmetry-preservation\textemdash and restriction biases in general\textemdash are problem- and data-dependent.  In contrast, the next section presents an example where the structure-preservation inherent in \Cref{alg:normal_eqns_Hamiltonian} is critical to producing a parametric OpInf ROM which exhibits both physical consistency and good predictive performance in time.

\subsection{Wave equation}\label{sec:numerics-wave}

The structure-preserving procedure from~\Cref{sec:Hamiltonian} is now applied to the parameterized wave equation initial boundary value problem~\cref{eq:wave} in one and two spatial dimensions. In each case, the basis matrix is constructed via PSD with a mass-weighted variant of the cotangent lift algorithm from \cite{peng2016symplectic} as follows: Let $~U_W\in\mathbb{R}^{N\times r}$ be the $~M_W$-weighted POD basis of dimension $r$ for $~Y = \left[~R~Q_1 \enspace \cdots ~R~Q_{N_s} \enspace ~R~P_1 \enspace \cdots \enspace ~R~P_{N_s} \right]\in \mathbb R^{N\times 2N_tN_s}$, where $~Q_s,~P_s \in \mathbb{R}^{N\times N_t}$ are matrices containing the position and momentum FOM snapshots corresponding to training parameter $~\mu_s$, $s=1,\ldots,N_s$.
Then the PSD basis is $~U = \mathrm{blockdiag}(~U_W,~U_W)\in\mathbb{R}^{2N\times 2r}$, which is orthonormal with respect to $~M = \mathrm{blockdiag}(~M_W, ~M_W)$.

To demonstrate the effects of symmetry preservation, both \Cref{alg:normal_eqns_unstructured} and \Cref{alg:normal_eqns_Hamiltonian} are applied to produce ROMs with the block structure~\cref{eq:hamOpInfROM2}. Results with symmetry preservation ($\bar{\tens{T}}_1 = \bar{\tens{T}}_1\trp$ and $\bar{~A}_2 = \bar{~A}_2\trp$) are denoted ``H-OpInf'', while non-symmetric ROM results are labeled ``OpInf''.
Accuracy is assessed with the relative $L^2$-error \cref{eq:l2err} for the positional variable, as well as
the absolute error in the reduced Hamiltonian~\cref{eq:reduced-hamiltonian} as a function of time, $|\hat{H}(\hat{~y}(t),~\mu) - \hat{H}(\hat{~y}(0),~\mu)|.$

\subsubsection{One spatial dimension}

\begin{figure}
    \centering
    \includegraphics[width=\textwidth]{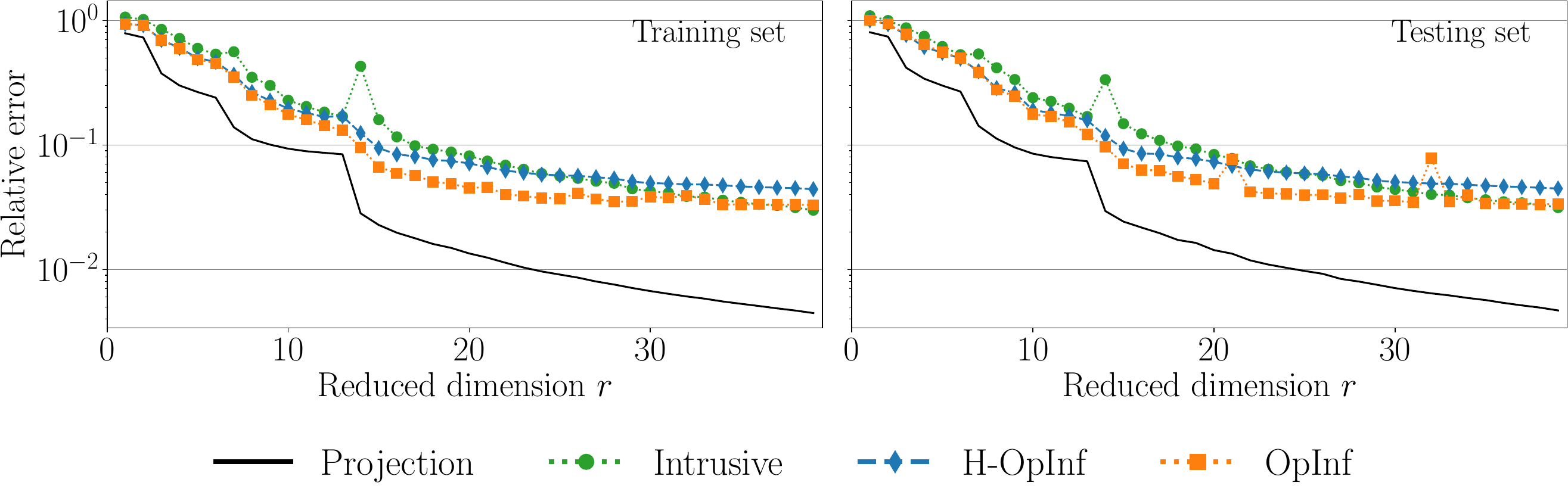}
    \caption{Relative $L^2$ error in ROM solutions to the 1D wave system and the projection error of the FOM snapshots as a function of the reduced dimension $r$, over all training parameter values (left) and testing parameter values (right).}
    \label{fig:1dwavetraintest}
\end{figure}

\begin{figure}
    \centering
    \includegraphics[width=\linewidth]{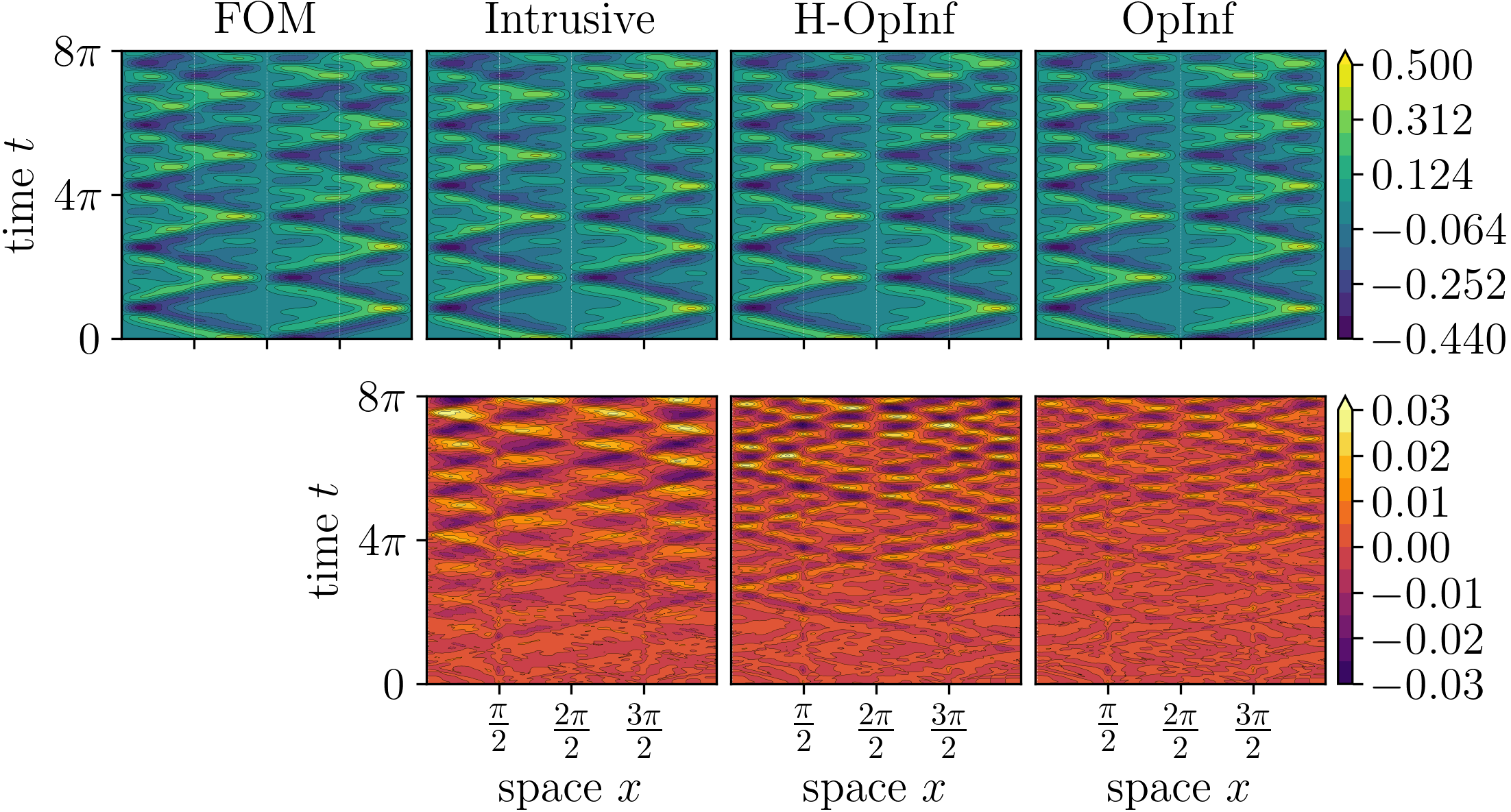}
    \caption{Space-time contour plots of the FOM and ROM solutions to the 1D wave equation (top) and signed pointwise error in the ROM solutions (bottom). The reduced state dimension is $r=18$ and the parameters are $~\mu \approx [1.064, 1.794, 1.724, 1.181]\trp$.
} \label{fig:1dwave}
\end{figure}

In this experiment, the system~\cref{eq:wave} is considered on the one-dimensional domain $\Omega = (0, 2\pi)$
with the parameterized wave speed given by
\begin{align*}
    \begin{aligned}
    c(x, ~\mu)^2
    = \Bigl[x \in (0, \tfrac{\pi}{2})\Bigr] \mu_1^2
    &+ \Bigl[x \in (\tfrac{\pi}{2}, \pi)\Bigr] \mu_2^2
    \\
    &+ \Bigl[x \in (\pi, \tfrac{3\pi}{2})\Bigr] \mu_3^2
    + \Bigl[x \in (\tfrac{3\pi}{2}, 2\pi)\Bigr]\mu_4^2,
    \end{aligned}
    \qquad
    ~\mu \in \mathbb R^4.
\end{align*}
The order of the space $W_h$ is set to 0, while the order of $V_h$ is chosen to be 1. The mesh~$\Omega_h$ is composed of 1000 uniform elements with $N=1000$ spatial degrees of freedom for each of the position and momentum variables.
Snapshot data are generated by time-integrating the FOM~\cref{eq:blockqp} using the energy-conserving implicit midpoint rule,
\begin{align*} \frac{~y^{n+1}-~y^n}{\Delta t}
    = ~J~A(~\mu)\frac{~y^{n+1} + ~y^n}{2},
    \qquad ~y^0(x) = ~y_0,
\end{align*}
until terminal time $t_f=8\pi$ with step size $\Delta t = \pi/100$ for a total of $N_t = 801$ points in time. The initial condition is set to
\begin{align*}
    y(x,0) = \exp(-(x-\pi)^2)\sin(x).
\end{align*}
Snapshot data are obtained over 40 training parameter values and 10 test values, with all parameter entries uniformly sampled from the interval $(0.8,2.4)^4$, yielding substantial but relatively controlled variations in wave profiles (c.f.~\Cref{fig:1dwave}).
The resulting ROMs are also integrated in time using the implicit midpoint rule.

\Cref{fig:1dwavetraintest} shows the relative errors in the ROM solutions and the projection error as functions of the reduced dimension $r$ for all training and testing snapshots, respectively.
The projection error decays slowly as $r$ increases, indicating that this is a difficult problem for linear model reduction. In both the training and testing sets, the non-symmetric OpInf ROMs tend to slightly outperform the intrusive and symmetric OpInf ROMs in an $L^2$ sense\textemdash a feature of their softly-enforced inductive bias toward the governing physics.
A more detailed comparison is given in \Cref{fig:1dwave}, which displays contour plots of the spacetime FOM solution alongside the corresponding ROM solutions for a reduced dimension of $r=12$ at a single testing parameter, as well as the signed error in the ROM solutions.
However, the non-symmetric OpInf ROMs fail to preserve the reduced Hamiltonian over time. This is demonstrated in~\Cref{fig:Herr}, which shows that the intrusive ROM and the symmetric OpInf ROM conserve the reduced Hamiltonian to near machine precision, while the non-symmetric OpInf ROM immediately alters the value of the reduced Hamiltonian. Over long integration times, the OpInf ROM gains energy erroneously, which can lead to un-physical and unstable solutions (c.f.~\Cref{fig:1dwaveblowup}).  This is the key issue faced by soft-constrained approaches such as unconstrained OpInf in the presence of systems with physical conservation laws. Indeed, while the block structure of \cref{eq:hamOpInfROM2} is helpful for non-symmetric OpInf, as unstructured non-symmetric ROMs of the form \cref{eq:hamOpInfROM1} tend to quickly become unstable, simply imposing block structure on the ROM is not enough to guarantee energy conservation.

\begin{figure}
    \centering
    \includegraphics[width=\textwidth]{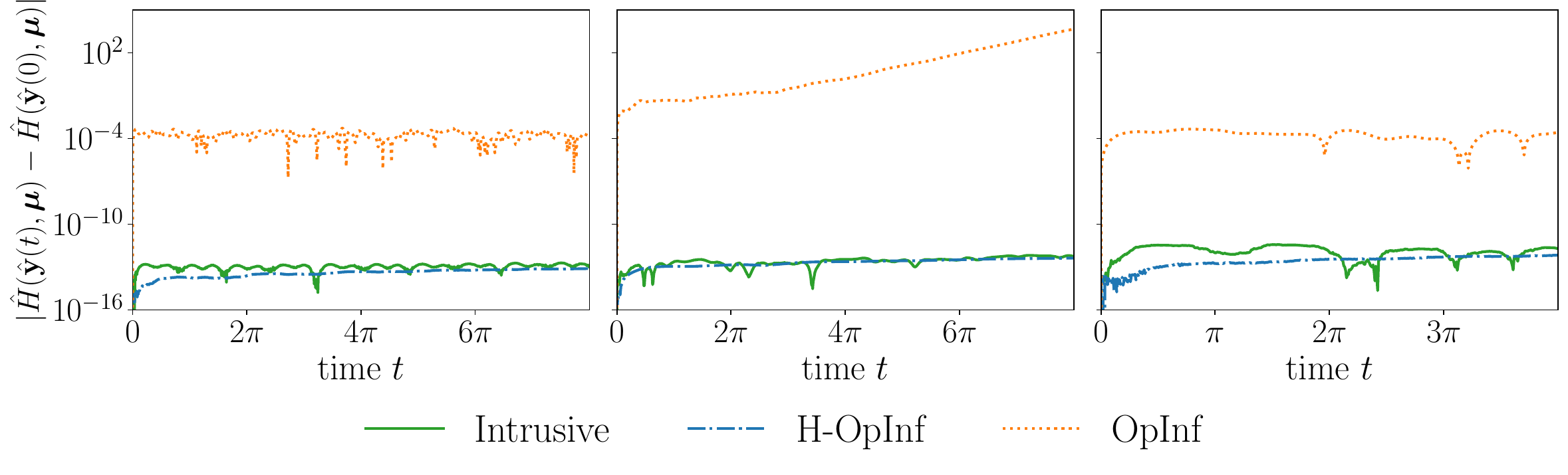}
    \caption{Absolute error in the reduced Hamiltonian versus time, for the ROM solutions to the wave equation displayed in~\Cref{fig:1dwave} (left), ~\Cref{fig:1dwaveblowup} (middle), and ~\Cref{fig:2dwave} (right).}
    \label{fig:Herr}
\end{figure}

\begin{figure}
    \centering
    \includegraphics[width=\textwidth]{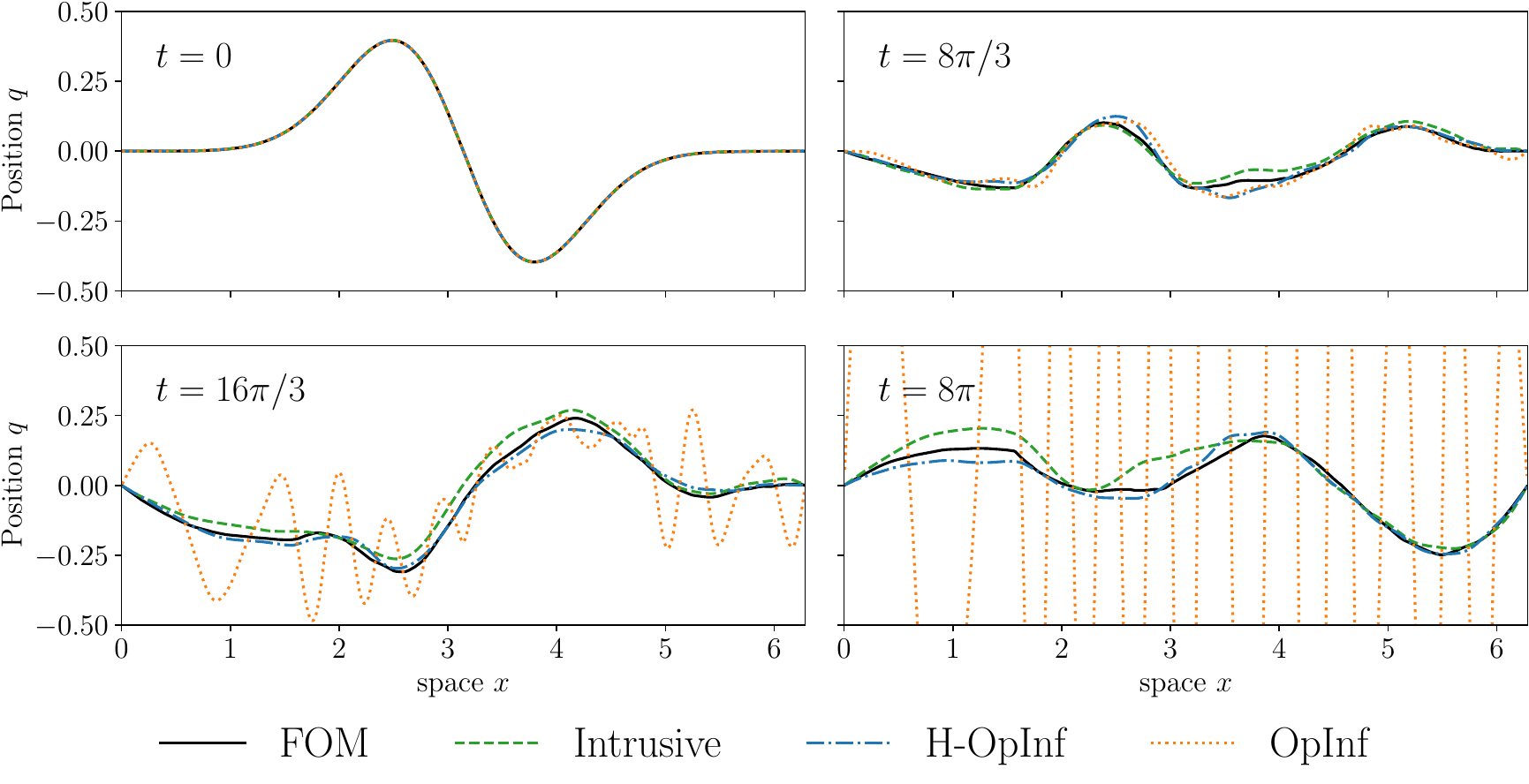}
    \caption{Emergence of non-physical oscillations and blow-up in a non-symmetric OpInf ROM solution to the 1D wave equation due to the lack of Hamiltonian structure preservation.}
    \label{fig:1dwaveblowup}
\end{figure}

To further illustrate the effects of the erroneous energy gain by non-symmetric OpInf ROMs, consider a related experiment in which the training parameters are sampled from the exaggerated range $(0.8,8)^4$ which produces extreme deviations in the wave over time.
\Cref{fig:1dwaveblowup} displays solutions of ROMs of dimension $r=30$ at a single
$~\mu\approx = [7.92, 1.53, 2.30, 1.96]\trp$.
The solution to the non-symmetric OpInf ROM exhibits nonphysical oscillations that progressively amplify over time. In contrast, the solutions from the intrusive and the H-OpInf ROM remain stable and approximately aligned with the FOM solution.
The error in the reduced Hamiltonian for each ROM is shown in~\Cref{fig:Herr}.

\subsubsection{Two spatial dimensions}
\begin{figure}
\centering
\includegraphics[width=\linewidth]{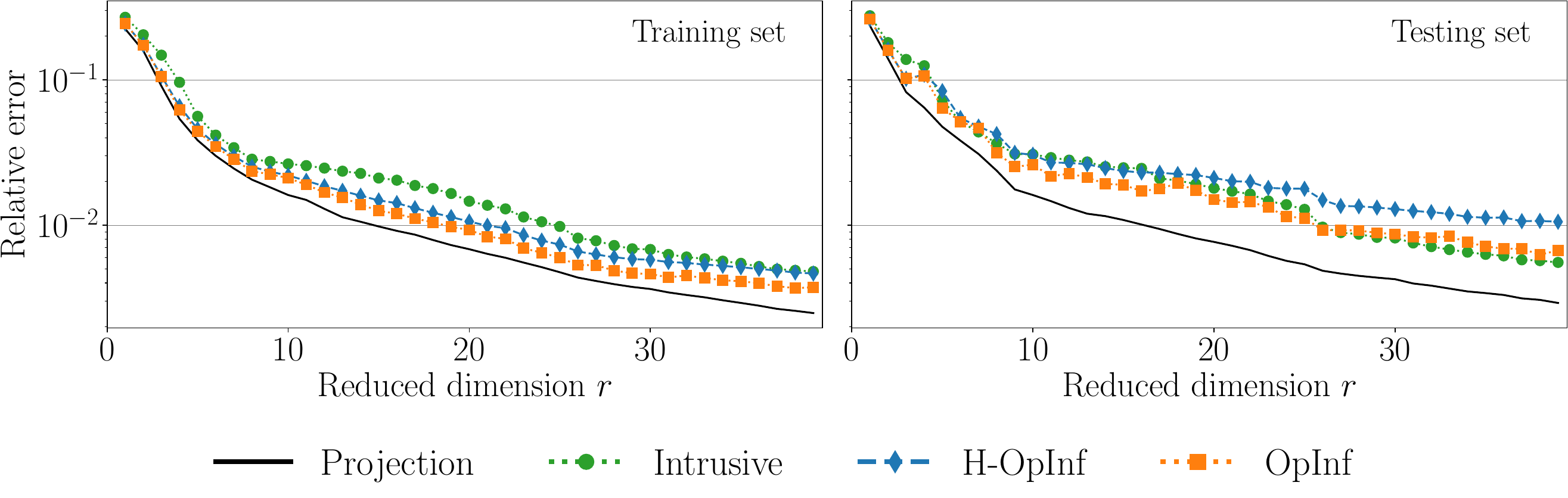}
\caption{Relative $L^2$ error in ROM solutions to the 2D wave system and the projection error of the FOM snapshots as a function of the reduced dimension $r$, over all training parameter values (left) and testing parameter values (right).
}
    \label{fig:2dwavetraintest}
\end{figure}
The previous experiment is now repeated for the two-dimensional domain $\Omega = (0, 2\pi)\times (0, 2\pi)$.
The wave speed is parameterized as
\begin{align*}
    \begin{aligned}
    c(~x,~\mu)^2 = \Bigl[ ~x \in (0,\pi)^2 \Bigr]\mu_1^2
    &+ \Bigl[ ~x \in (\pi,2\pi) \times (0,\pi) \Bigr]\mu_2^2\\
    &+ \Bigl[ ~x \in (0,\pi) \times (\pi, 2\pi) \Bigr]\mu_3^2
    + \Bigl[ ~x \in (\pi,2\pi)^2 \Bigr]\mu_4^2,
    \end{aligned}
    \quad
    ~\mu \in \mathbb{R}^4.
\end{align*}
For this experiment, the order $k$ of the finite-element spaces $W_h$ and $V_h$ is set to $2$. Using a unstructured mesh of 512 elements, this configuration results in $N=3072$ spatial degrees of freedom for both the position and momentum variables. A terminal time of $t_f= 4\pi$ with $\Delta t = \pi/500$ yields $N_t = 501$ points in time. The initial condition is given by
\begin{align*}
    y(~x,0) = \exp\big(-0.01(x_1-\pi)^2-0.01(x_2-\pi)^2\big)\sin\left(\frac{x_1}{2}\right)\sin\left(\frac{x_2}{2}\right).
\end{align*}
A total of $N_s=20$ training and $5$ testing parameters are sampled from the same parameter space as in the previous experiment.

\Cref{fig:2dwavetraintest} shows the relative error in the ROM solutions over the training and testing sets, together with the respective projection errors. The error profiles show similar trends as those in the one-dimensional experiment:
the non-symmetric OpInf ROMs tend to perform best in an $L^2$ sense, but fail to conserve the Hamiltonian. Note that the symmetric OpInf ROM tends to do slightly better than the intrusive ROM on the training set, a consequence of using a data-driven approach. On the other hand, the intrusive ROM generalizes slightly better to the testing set.

\begin{figure}
    \centering
    \begin{subfigure}[b]{0.9\textwidth}
        \centering
        \begin{minipage}{0.32\textwidth}
            \centering
            \phantom{)---}$t=0$\phantom{(}
            \includegraphics[width=\textwidth,trim=4cm 12cm 0cm 0, clip]{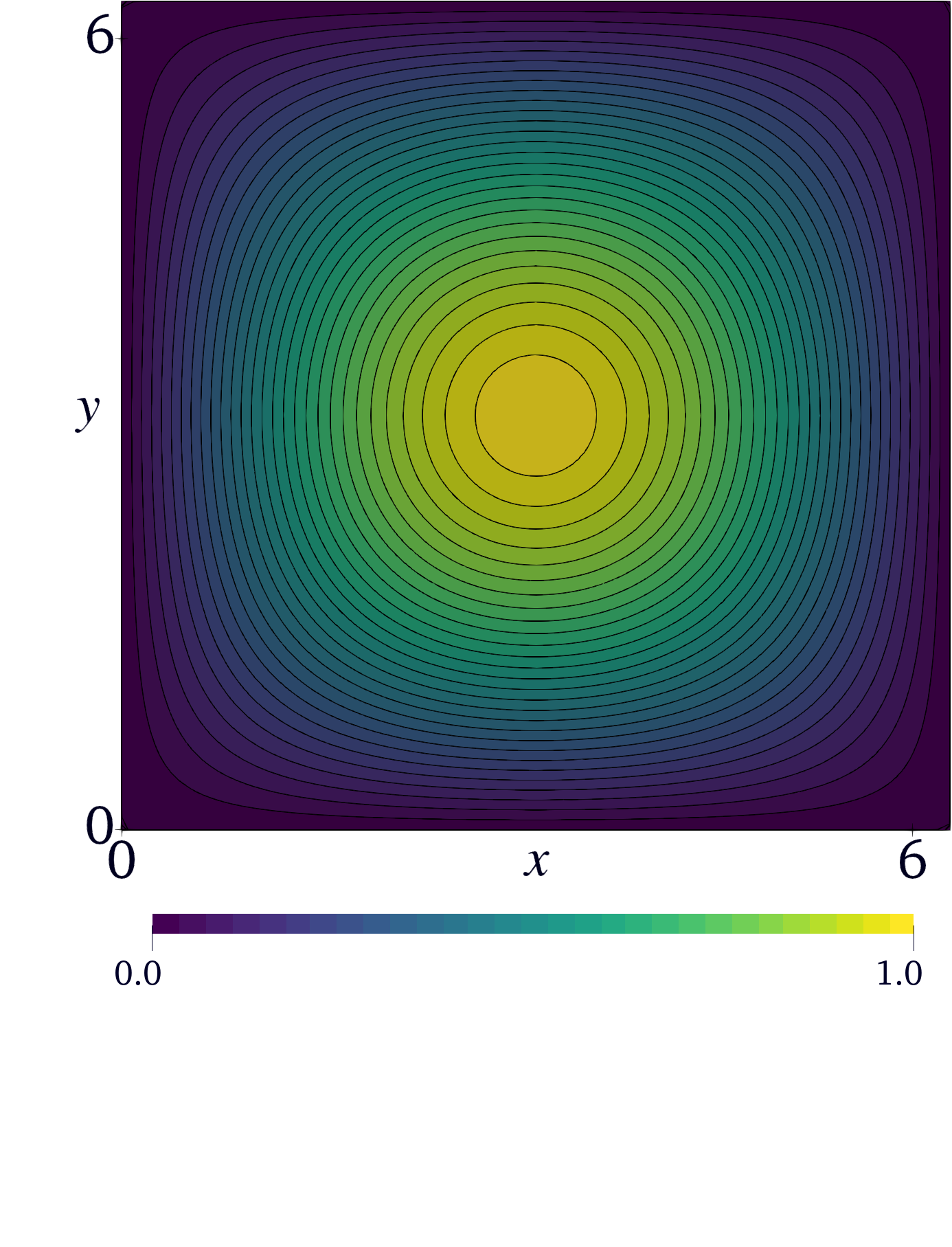}
            \vspace{0.2cm}
        \end{minipage}\begin{minipage}{0.32\textwidth}
            \centering
            \phantom{)---}$t=2\pi$\phantom{(}
            \includegraphics[width=\textwidth,trim=4cm 12cm 0 0, clip]{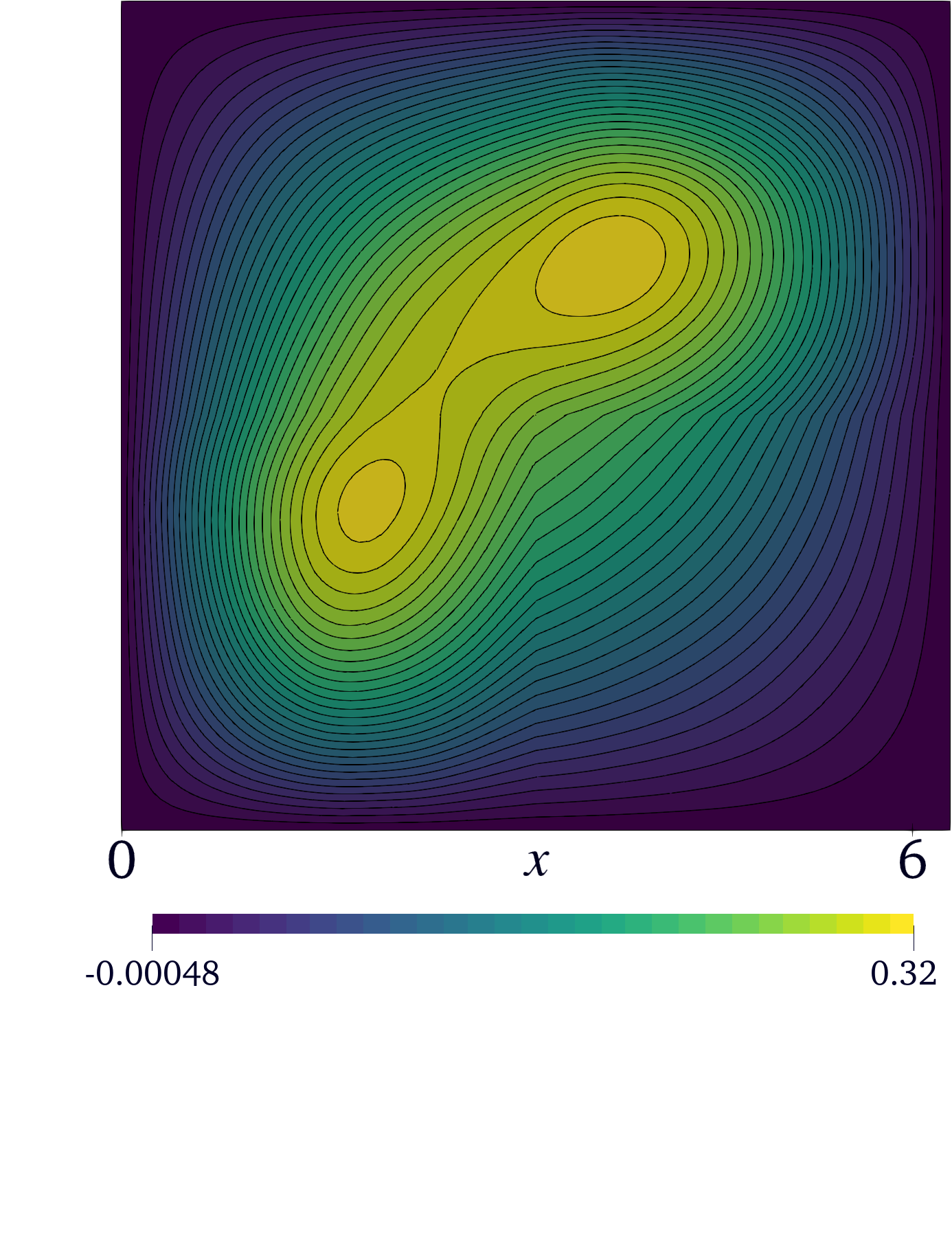}
            \vspace{0.2cm}
        \end{minipage}\begin{minipage}{0.32\textwidth}
            \centering
            \phantom{)---}$t=4\pi$\phantom{(}
            \includegraphics[width=\textwidth,trim=4cm 12cm 0 0, clip]{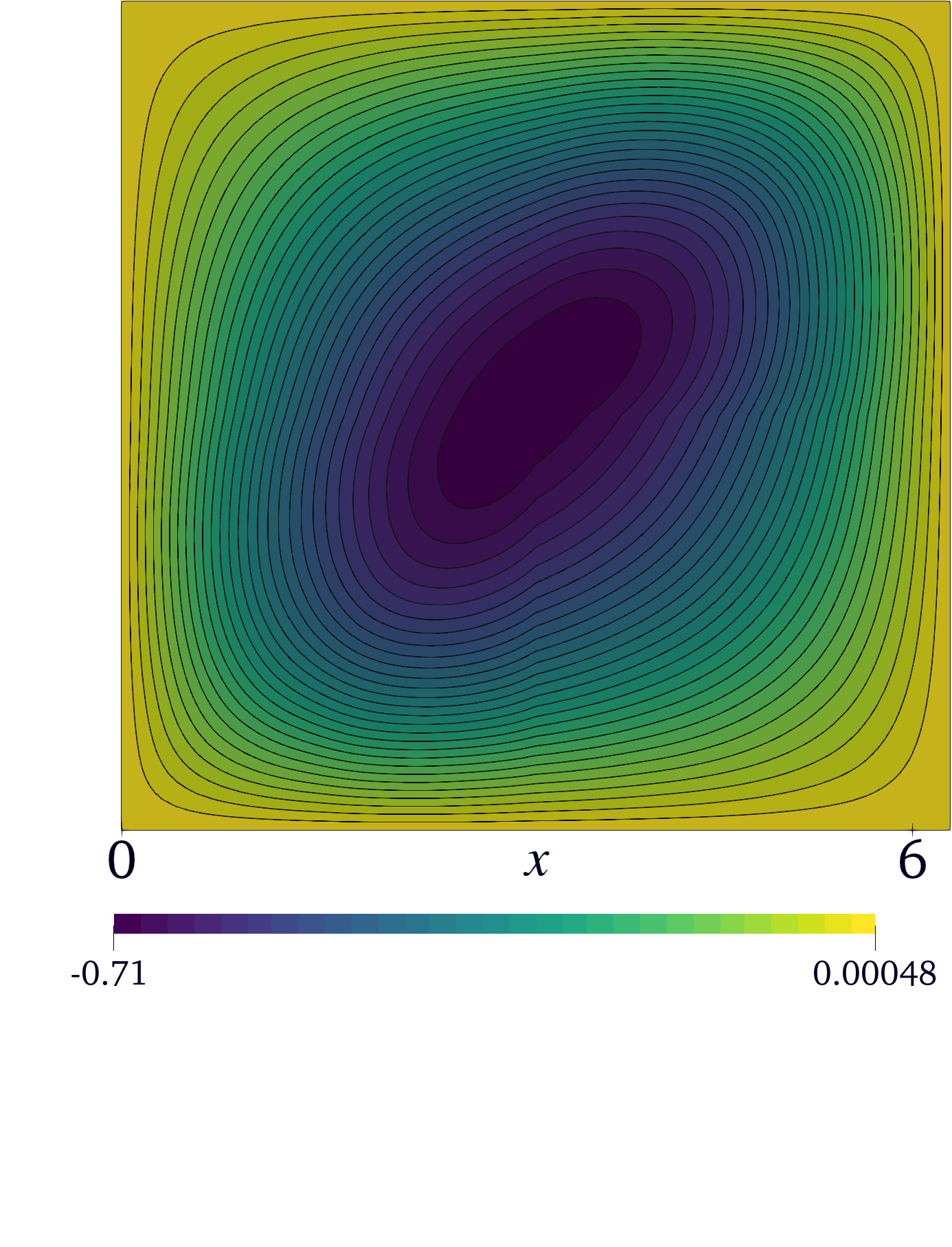}
            \vspace{0.2cm}
        \end{minipage}
\label{fig:2dwavesol}
    \end{subfigure}

    \begin{subfigure}[b]{0.9\textwidth}
        \centering
        \begin{minipage}{0.32\textwidth}
            \centering
            \phantom{)---}Intrusive\phantom{(}
            \includegraphics[width=\textwidth,trim=4cm 12cm 0 0, clip]{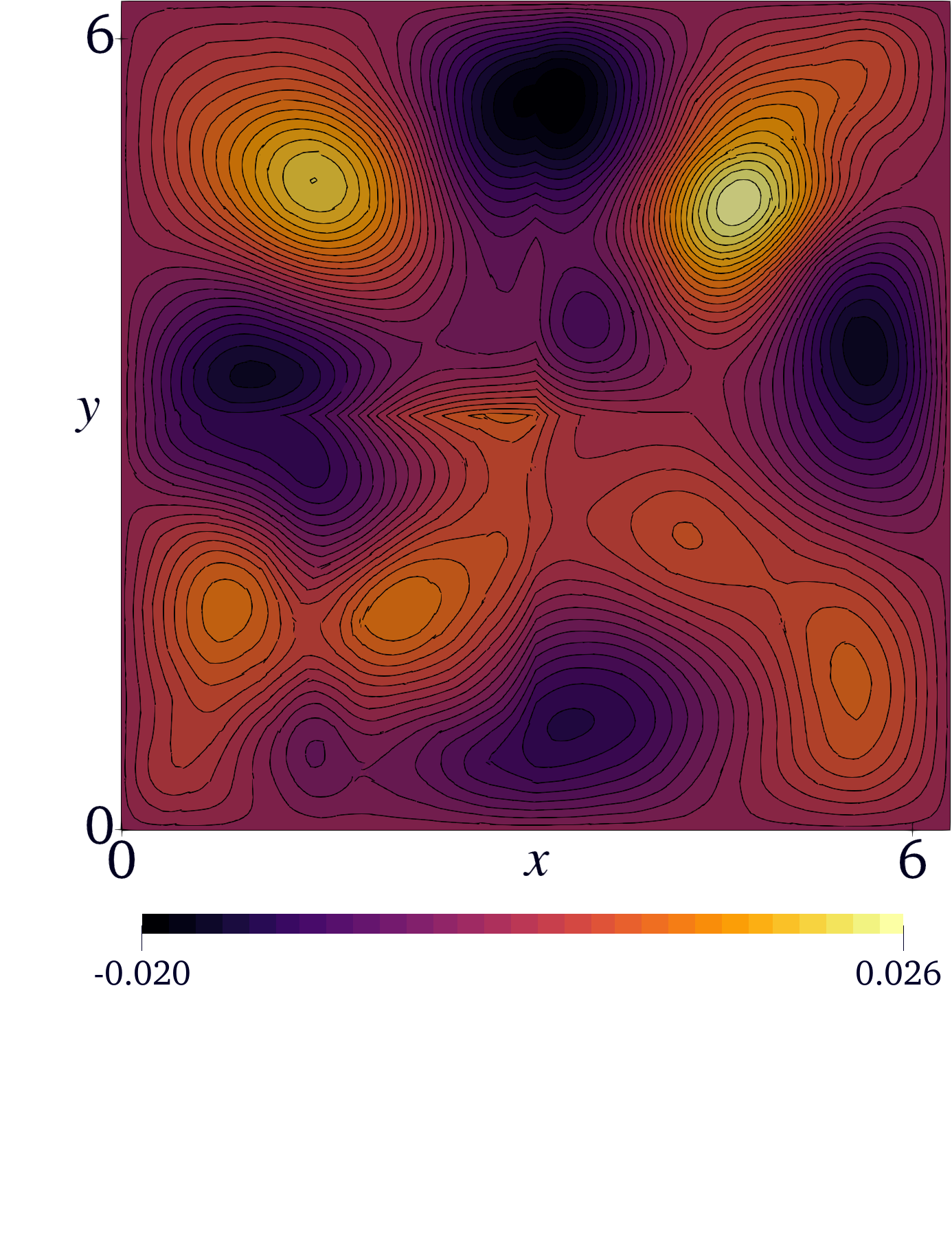}
            \vspace{0.2cm}
        \end{minipage}\begin{minipage}{0.32\textwidth}
            \centering
            \phantom{)---}H-OpInf\phantom{(}
            \includegraphics[width=\textwidth,trim=4cm 12cm 0 0, clip]{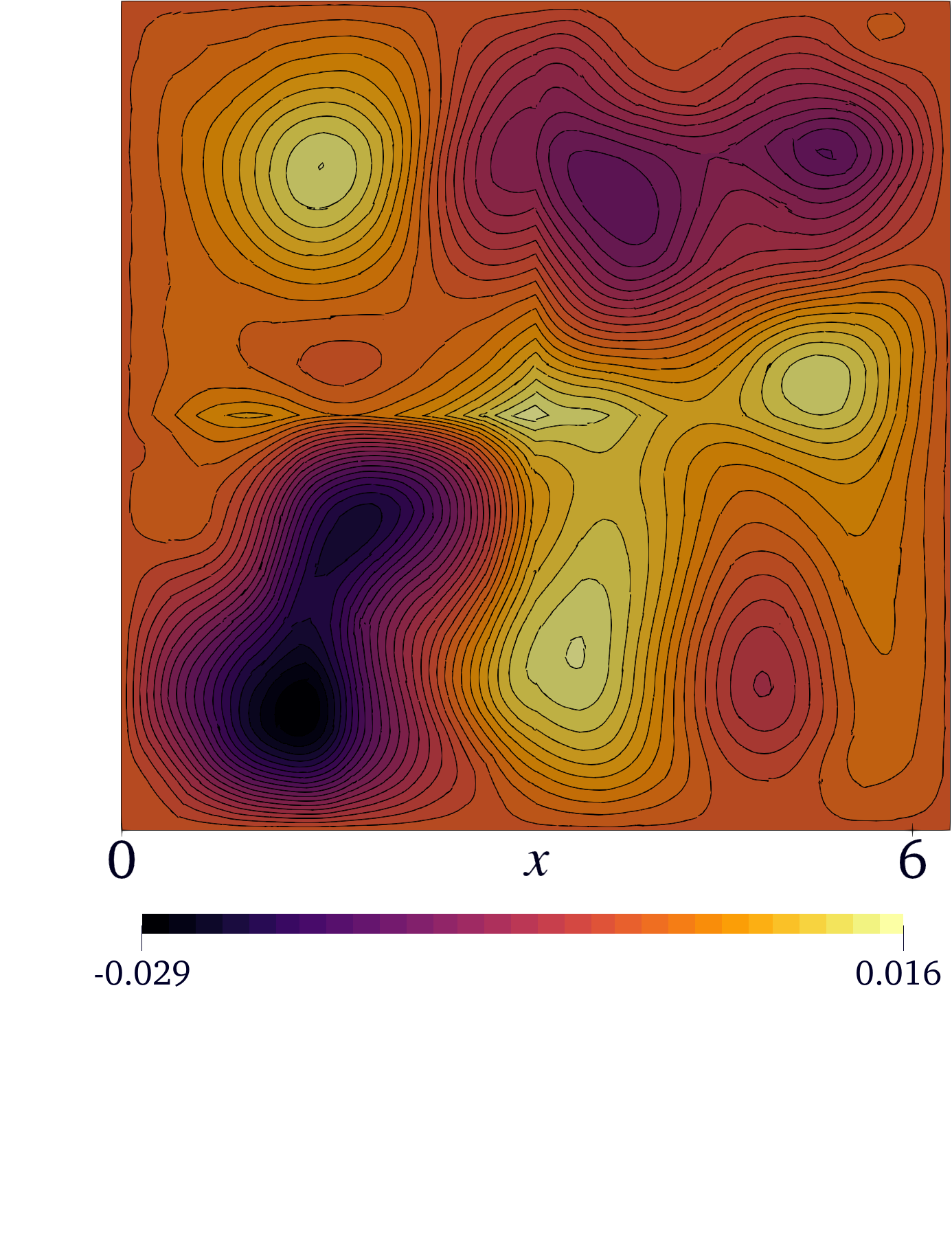}
            \vspace{0.2cm}
        \end{minipage}\begin{minipage}{0.32\textwidth}
            \centering
            \phantom{)---}OpInf\phantom{(}
            \includegraphics[width=\textwidth,trim=4cm 12cm 0 0, clip]{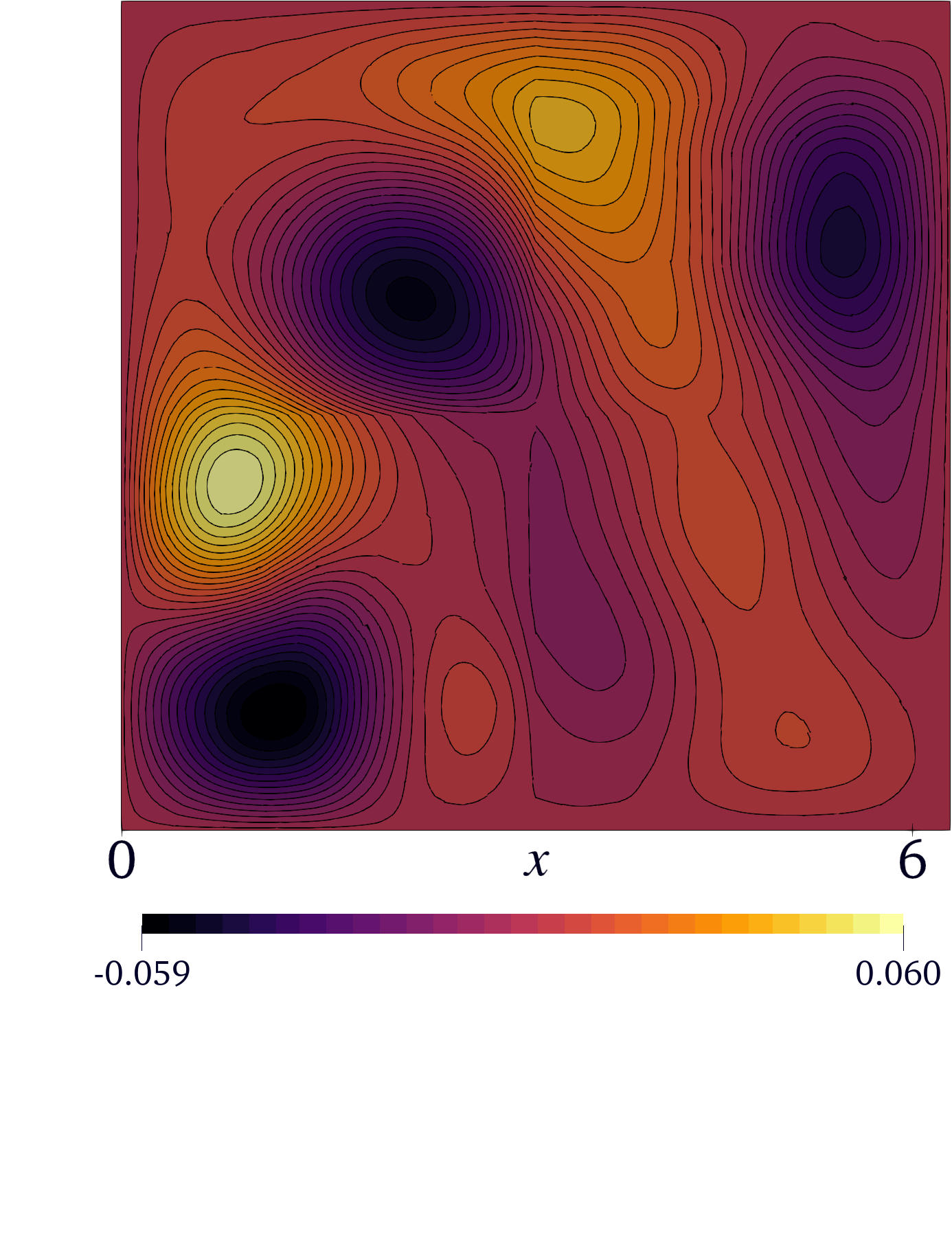}
            \vspace{0.2cm}
        \end{minipage}
\label{fig:2dwaveerr}
    \end{subfigure}
    \vspace{-0.75cm}
    \caption{Top row: FOM solution to the 2D wave system at different times. Bottom row: signed error in the ROM solutions at $t=4\pi$. Here, $r=18$ and $~\mu=[1.010, 1.946, 1.263, 1.093]\trp$.
}
    \label{fig:2dwave}
\end{figure}

Finally, \Cref{fig:2dwave} depicts the position variable at three different times for a single testing parameter, along with the corresponding signed errors in the ROM-approximated position at terminal time, with $r=18$. For this case, the intrusive ROM achieves the highest accuracy at terminal time, with a relative error of approximately 1.46\%. The H-OpInf and OpInf ROMs yield relative errors of 1.80\% and 2.70\%, respectively. The last frame of~\Cref{fig:Herr} shows again that the symmetric OpInf ROM and the intrusive ROM conserve the reduced Hamiltonian to machine precision, while the non-symmetric OpInf ROM exhibits persistent energy gain, as expected.

 \section{Conclusion}\label{sec:conclusion}

This paper presents a novel, tensorized operator inference procedure for non-intrusively learning reduced-order models corresponding to parametric systems of semi-discrete PDEs.  In its most basic form, the proposed method serves as a compact, easy-to-implement replacement for the state-linear component of existing parametric OpInf strategies. Additionally, it is shown that the advantages of this formulation enable an extension to the inference of operators with additional structure information, such as partial symmetries, which appear in the contexts of Hamiltonian systems and gradient flows. Applications to a heat equation with variable diffusion coefficient and a Hamiltonian wave equation with variable wave speed are presented, where the proposed OpInf leads to structure-preserving Hamiltonian ROMs which are energy-conserving and symplectic as required.

The structure-preserving inference procedure of \Cref{sec:Hamiltonian}, specifically in \Cref{thm:popinf-Hamiltonian} and \Cref{alg:normal_eqns_Hamiltonian}, is based on solving the normal equations corresponding to a particular minimization problem. It would be interesting to consider future extensions of this approach which avoid the potential for numerical ill-conditioning, i.e., to develop an extension of \Cref{alg:lstsq_unstructured} to the structure-preserving setting. This may have the secondary benefit of removing the need for a large, memory-intensive intermediate tensors that are required for symmetry enforcement in the current approach, as well as the potential for greater parallelism and preconditioning to improve convergence. Future work will also consider incorporating additional constraints on the learned operators beyond symmetry, as well as accounting for nonlinear terms arising from non-quadratic Hamiltonians, i.e., when $f \ne 0$ in \cref{eq:theHamiltonian}--\cref{eq:mgradCanonicalHamiltonian}.

\section*{Acknowledgments}

Support for this work was received through the U.S. Department of Energy, Office of Science, Office of Advanced Scientific Computing Research, Mathematical Multifaceted Integrated Capability Centers (MMICCS) program, under Field Work Proposal 22025291 and the Multifaceted Mathematics for Predictive Digital Twins (M2dt) project.  A.G. and S.A.M. acknowledge additional support from the John von Neumann Fellowship, a position sponsored by Sandia National Laboratories in conjunction with the Applied Mathematics Program of the U.S. Department of Energy Office of Advanced Scientific Computing Research.
Sandia National Laboratories is a multimission laboratory managed and operated by National Technology \& Engineering Solutions of Sandia, LLC, a wholly owned subsidiary of Honeywell International Inc., for the U.S.~Department of Energy's National Nuclear Security Administration under contract DE-NA0003525.
This paper describes objective technical results and analysis. Any subjective views or opinions that might be expressed in the paper do not necessarily represent the views of the U.S.~Department of Energy or the United States Government.

\appendix

\begin{appendices}

\section{Tensor algebra lemmata}\label{appendix:tensoralgebra}
This appendix lists tensor algebra conventions and proves minor results that are used in \Cref{sec:generic,sec:Hamiltonian}.

\begin{definition}[Frobenius inner product]
\label{def:frobenius}
Consider the Cartesian product $X = \mathbb{R}^{N_1\times\cdots\times N_n}$.
For order-$n$ tensors $\tens{A},\tens{B}\in X$ with entries $\textrm{A}_{i_1,\ldots,i_n}$ and $\textrm{B}_{i_1,\ldots,i_n}$, respectively, the \emph{Frobenius inner product} $\IP{\cdot}{\cdot}:X\times X\to\mathbb{R}$ is the bilinear operator
\begin{align*}
    \IP{\tens{A}}{\tens{B}}
= \sum_{i_1=1}^{N_1}\sum_{i_2=1}^{N_2}\cdots\sum_{i_n=1}^{N_n}
    \textrm{A}_{i_1,\ldots,i_n}\textrm{B}_{i_1,\ldots,i_n}.
\end{align*}
\end{definition}

\begin{lemma}[Properties of the Frobenius inner product]
\label{thm:frobeniusproperties}
Let $\IP{\cdot}{\cdot}$ denote the Frobenius inner product, with the domain inferred from the context. For $~A\in\mathbb{R}^{\ell\times m}$, $~B\in\mathbb{R}^{m \times n}$, and $~C\in\mathbb{R}^{\ell\times n}$,
\begin{align*}
    \IP{~A~B}{~C} = \IP{~A}{~C~B\trp}.
\end{align*}
Moreover, if  $\tens{T}\in\mathbb{R}^{m\times n \times p'}$ and $~\nu\in\mathbb{R}^{p'}$, then
\begin{align*}
    \IP{\tens{T}~\nu}{~B}
    = \IP{\tens{T}}{~B\otimes~\nu},
\end{align*}
where $\otimes$ denotes the outer product.
\begin{proof}
Denoting the entries of $~A$, $~B$, and $~C$ with $A_{ij}$, $B_{ij}$, and $C_{ij}$, respectively,
\begin{align*}
    \IP{~A~B}{~C}
    = \sum_{i=1}^{\ell}\sum_{j=1}^{n}
    \lr{\sum_{k=1}^{m}A_{ik}B_{kj}}C_{ij}
    = \sum_{i=1}^{\ell}\sum_{k=1}^{m}A_{ik}
    \lr{\sum_{j=1}^{n}C_{ij}B_{kj}}
    = \IP{~A}{~C~B\trp}.
\end{align*}
Next, denoting the entries of $\tens{T}$ with $\mathrm{T}_{ijk}$ and the entries of $~\nu$ with $\nu_{i}$,
\begin{align*}
    \IP{\tens{T}~\nu}{~B}
    = \sum_{k=1}^{m}\sum_{j=1}^{n}
    \lr{\sum_{i=1}^{p'}\mathrm{T}_{kji}\nu_{i}}B_{kj}
    = \sum_{k=1}^{m}\sum_{j=1}^{n}\sum_{i=1}^{p'}\mathrm{T}_{kji}
    \lr{B_{kj}\nu_{i}}
    = \IP{\tens{T}}{~B\otimes~\nu}.
\end{align*}
\end{proof}
\end{lemma}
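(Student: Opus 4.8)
The plan is to verify both identities by a short computation in components, using the definition of the Frobenius inner product (\Cref{def:frobenius}) together with the componentwise formulas for the matrix product and the outer product, and then reindexing the resulting finite sums; nothing deeper is required.

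For the first identity I would expand $(~A~B)_{ij} = \sum_{k} A_{ik} B_{kj}$ and substitute into $\IP{~A~B}{~C} = \sum_{i,j}(~A~B)_{ij} C_{ij}$, obtaining the triple sum $\sum_{i,j,k} A_{ik} B_{kj} C_{ij}$. Since the sum is finite, the summation order may be rearranged freely; performing the $j$-summation first identifies $\sum_{j} C_{ij} B_{kj}$ as the $(i,k)$ entry of $~C~B\trp$, so that the remaining double sum $\sum_{i,k} A_{ik}(~C~B\trp)_{ik}$ is precisely $\IP{~A}{~C~B\trp}$.

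For the second identity I would use that the contraction $\tens{T}~\nu$ has entries $(\tens{T}~\nu)_{kj} = \sum_{i} \mathrm{T}_{kji}\,\nu_i$ and that the outer product $~B\otimes~\nu$ has entries $(~B\otimes~\nu)_{kji} = B_{kj}\,\nu_i$. Substituting the former into $\IP{\tens{T}~\nu}{~B} = \sum_{k,j}(\tens{T}~\nu)_{kj} B_{kj}$ gives $\sum_{k,j,i} \mathrm{T}_{kji}\,\nu_i\,B_{kj}$, and recognizing $\nu_i B_{kj} = (~B\otimes~\nu)_{kji}$ rewrites this as $\sum_{k,j,i}\mathrm{T}_{kji}\,(~B\otimes~\nu)_{kji}$, which is the order-$3$ Frobenius inner product $\IP{\tens{T}}{~B\otimes~\nu}$ from \Cref{def:frobenius}.

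I do not expect any genuine obstacle here: both claims are elementary rearrangements of finite sums, with no convergence considerations. The only point requiring care is index bookkeeping — in particular, ensuring that the slot of $\tens{T}$ contracted against $~\nu$ (namely the last slot) is matched to the slot in which the outer product $~B\otimes~\nu$ appends $~\nu$'s index, so that the two triple sums agree term by term. Once this alignment is fixed, both equalities follow immediately.
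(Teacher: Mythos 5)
Your proposal is correct and follows essentially the same route as the paper's proof: both identities are verified by expanding the Frobenius inner product in components, rearranging the resulting finite triple sums, and recognizing the entries of $~C~B\trp$ and $~B\otimes~\nu$ respectively. The index bookkeeping you flag (contracting the last slot of $\tens{T}$ against $~\nu$ and matching it to the appended index of $~B\otimes~\nu$) is handled identically in the paper.
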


\begin{definition}[Tensor contraction]
\label{def:contraction}
Let $$\tens{A}\in\mathbb{R}^{N_1\times \cdots\times N_{n-2} \times X\times Y}
\quad\text{and}\quad\tens{B}\in\mathbb{R}^{Y\times X\times M_3 \times \cdots \times M_m}$$ with entries $\mathrm{A}_{i_1,\ldots,i_{n-2},i_x,i_y}$ and $\mathrm{B}_{k_y,k_x,k_3,\ldots,k_m}$, respectively.
The contraction $$\tens{A}:\tens{B}\in\mathbb{R}^{N_1\times\cdots\times N_{n-2}\times M_3\times\cdots\times M_m}$$ is the tensor of order $N + M - 2$ with entries
\begin{align*}
    (\tens{A}:\tens{B})_{i_1,\ldots,i_{n-2},k_3,\ldots,k_m}
    = \sum_{x=1}^{X}\sum_{y=1}^{Y}\mathrm{A}_{i_1,\ldots,i_{n-2},x,y}\mathrm{B}_{y,x,k_3,\ldots,k_m}.
\end{align*}
\end{definition}

\begin{lemma}[Outer product and contraction]
\label{thm:contractionproduct}
For $\tens{T}\in\mathbb{R}^{r\times r \times p'}$, $~B\in\mathbb{R}^{r\times r}$, and $~\nu\in\mathbb{R}^{p'}$,
\begin{align*}
    (\tens{T}~\nu)~B\otimes~\nu
    = \tens{T} : (~\nu\otimes~B\otimes~\nu),
\end{align*}
where $:$ is the contraction operator of \Cref{def:contraction}.
\begin{proof}
Let $\mathrm{T}_{ijk}$, $B_{ij}$, and $\nu_{i}$ denote the entries of $\tens{T}$, $~B$, and $~\nu$, respectively.
Working componentwise,
\begin{align*}
    \lr{(\tens{T}~\nu)~B\otimes~\nu}_{ijk}
    = \sum_{a=1}^{r}(\tens{T}~\nu)_{ia}(~B\otimes~\nu)_{ajk}
    = \sum_{a=1}^{r}\sum_{b=1}^{p'}\mathrm{T}_{iab}\nu_{b}B_{aj}\nu_{k}.
\end{align*}
On the other hand,
\begin{align*}
    \lr{\tens{T} : (~\nu\otimes~B\otimes~\nu)}_{ijk}
    = \sum_{a=1}^{r}\sum_{b=1}^{p'}\mathrm{T}_{iab}(~\nu\otimes~B\otimes~\nu)_{bajk}
    = \sum_{a=1}^{r}\sum_{b=1}^{p'}\mathrm{T}_{iab}\nu_{b}B_{aj}\nu_{k}.
\end{align*}
\end{proof}
\end{lemma}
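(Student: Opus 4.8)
The identity is an equality of two order-$3$ tensors in $\mathbb{R}^{r\times r\times p'}$, and the plan is simply to compute the $(i,j,k)$ entry of each side and check that they coincide.

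First I would unwind the left-hand side. The contraction $\tens{T}~\nu$ is the $r\times r$ matrix with entries $(\tens{T}~\nu)_{ia} = \sum_{b=1}^{p'}\mathrm{T}_{iab}\nu_b$; right-multiplying by $~B$ and then forming the outer product with $~\nu$ appends a third index carrying a factor $\nu_k$, so that
\[
\lr{(\tens{T}~\nu)~B\otimes~\nu}_{ijk} = \sum_{a=1}^{r}(\tens{T}~\nu)_{ia}B_{aj}\nu_k = \sum_{a=1}^{r}\sum_{b=1}^{p'}\mathrm{T}_{iab}\nu_b B_{aj}\nu_k.
\]

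Next I would unwind the right-hand side using \Cref{def:contraction}. Writing $~\nu\otimes~B\otimes~\nu$ as the order-$4$ tensor with entries $(~\nu\otimes~B\otimes~\nu)_{bajk} = \nu_b B_{aj}\nu_k$, the contraction ``$:$'' sums the last two slots of $\tens{T}$ against the first two slots of $~\nu\otimes~B\otimes~\nu$ in reversed order, i.e.\ over the pattern $\mathrm{T}_{iab}(~\nu\otimes~B\otimes~\nu)_{bajk}$. This produces
\[
\lr{\tens{T}:(~\nu\otimes~B\otimes~\nu)}_{ijk} = \sum_{a=1}^{r}\sum_{b=1}^{p'}\mathrm{T}_{iab}\nu_b B_{aj}\nu_k,
\]
which is exactly the entry computed for the left-hand side.

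The computation is routine; the only place that needs attention---and the step I would double-check---is the reversed pairing of the contracted indices built into \Cref{def:contraction}. This reversal is precisely what forces the order-$4$ factor to be written $~\nu\otimes~B\otimes~\nu$ with $~\nu$ in front rather than, say, $~B\otimes~\nu\otimes~\nu$; getting this ordering wrong would break the correspondence. Once the index labels are aligned correctly there is nothing further to establish.
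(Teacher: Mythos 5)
Your proposal is correct and follows essentially the same route as the paper: compute the $(i,j,k)$ entry of each side and observe that both reduce to $\sum_{a=1}^{r}\sum_{b=1}^{p'}\mathrm{T}_{iab}\nu_b B_{aj}\nu_k$, with the reversed index pairing in the contraction handled exactly as in \Cref{def:contraction}. Nothing further is needed.
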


\begin{lemma}[Vectorization and contraction]
\label{thm:contractionvectorization}
For tensors $\tens{T} \in \mathbb{R}^{r \times r \times p'}$ and $\tens{X}\in\mathbb{R}^{p'\times r \times r \times p'}$,
\begin{align*}
\cvec_{23}\,\lr{\tens{T}:\tens{X}}
    = \lr{\cvec_{23}\,\tens{T}}
    \lr{\rvec_{12}\cvec_{34}\,\tens{X}},
\end{align*}
where $:$ is the contraction operator of \Cref{def:contraction}.
\begin{proof}
Let $\mathrm{T}_{ijk}$ and $\mathrm{X}_{ijk\ell}$ denote the entries of $\tens{T}$ and $\tens{X}$, respectively.
Working componentwise,
\begin{align*}
    \lr{\cvec_{23}(\tens{T}:\tens{X})}_{i,(k-1)r + j}
    = (\tens{T}:\tens{X})_{ijk}
    = \sum_{a=1}^{r}\sum_{b=1}^{p'}\mathrm{T}_{iab}\mathrm{X}_{bajk}
\end{align*}
for each $1 \le i,j \le r$ and $1 \le k \le p'$.
On the other hand, for $1\le a \le r$ and $1 \le b \le p'$,
\begin{align*}
    (\cvec_{23}\,\tens{T})_{i,(b-1)r+a}
    &= \mathrm{T}_{iab},
    \\
    (\rvec_{12}\cvec_{34}\,\tens{X})_{(b-1)r+a,(k-1)r+j}
    = (\cvec_{34}\,\tens{X})_{b,a,(k-1)r+j}
    &= \mathrm{X}_{bajk}.
\end{align*}
Hence, the $(i,(k-1)r + j)$-th entry of $\lr{\cvec_{23}\,\tens{T}}\lr{\rvec_{12}\cvec_{34}\,\tens{X}}$ is given by
\begin{align*}
    \sum_{a=1}^{r}\sum_{b=1}^{p'}
    (\cvec_{23}\,\tens{T})_{i,(b-1)r+a}
    (\rvec_{12}\cvec_{34}\,\tens{X})_{(b-1)r+a,(k-1)r+j}
    =\sum_{a=1}^{r}\sum_{b=1}^{p'}\mathrm{T}_{iab}\mathrm{X}_{bajk},
\end{align*}
which completes the proof.
\end{proof}
\end{lemma}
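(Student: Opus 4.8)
The plan is to verify this identity entrywise, tracking how the column-wise and row-wise partial vectorizations act on each of the three tensors involved. Throughout I would write $\mathrm{T}_{iab}$ for the entries of $\tens{T}$ (with $1\le i,a\le r$ and $1\le b\le p'$) and $\mathrm{X}_{bajk}$ for those of $\tens{X}$ (with $1\le b,k\le p'$ and $1\le a,j\le r$), so that the index names are arranged in advance to match the contraction convention of \Cref{def:contraction}.

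First I would expand the left-hand side. Since the contraction $\tens{T}:\tens{X}$ pairs the last two indices of $\tens{T}$ with the first two of $\tens{X}$ in reverse order, \Cref{def:contraction} gives $(\tens{T}:\tens{X})_{ijk} = \sum_{a=1}^{r}\sum_{b=1}^{p'}\mathrm{T}_{iab}\mathrm{X}_{bajk}$ for $1\le i,j\le r$ and $1\le k\le p'$. Applying $\cvec_{23}$ merges the second and third indices of this order-$3$ result column-wise into a single index at position $2$ (the second dimension having size $r$), so $\lr{\cvec_{23}(\tens{T}:\tens{X})}_{i,(k-1)r+j} = \sum_{a,b}\mathrm{T}_{iab}\mathrm{X}_{bajk}$.

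Next I would compute the two factors on the right-hand side. For $\cvec_{23}\,\tens{T}$, the column-wise merge of indices $2$ and $3$ yields $\lr{\cvec_{23}\,\tens{T}}_{i,(b-1)r+a} = \mathrm{T}_{iab}$. For $\rvec_{12}\cvec_{34}\,\tens{X}$, I would apply the two vectorizations in sequence: $\cvec_{34}$ merges indices $3,4$ column-wise to give $\lr{\cvec_{34}\,\tens{X}}_{b,a,(k-1)r+j} = \mathrm{X}_{bajk}$, and then $\rvec_{12}$ merges indices $1,2$ row-wise to give $\lr{\rvec_{12}\cvec_{34}\,\tens{X}}_{(b-1)r+a,(k-1)r+j} = \mathrm{X}_{bajk}$. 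The key point is that both factors index their shared middle dimension by the \emph{same} composite label $(b-1)r+a$, with $1\le a\le r$ and $1\le b\le p'$; this is precisely why the row-wise convention must be used on $\rvec_{12}$ while $\cvec_{23}$ is column-wise, since a column-wise merge of indices $1,2$ of $\tens{X}$ would instead produce the label $(a-1)p'+b$, which would not line up.

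Finally I would form the matrix product and match entries: the $(i,(k-1)r+j)$-entry of $\lr{\cvec_{23}\,\tens{T}}\lr{\rvec_{12}\cvec_{34}\,\tens{X}}$ equals $\sum_{a=1}^{r}\sum_{b=1}^{p'}\lr{\cvec_{23}\,\tens{T}}_{i,(b-1)r+a}\lr{\rvec_{12}\cvec_{34}\,\tens{X}}_{(b-1)r+a,(k-1)r+j} = \sum_{a,b}\mathrm{T}_{iab}\mathrm{X}_{bajk}$, which is exactly the left-hand side computed above. As this holds for every admissible $i,j,k$, the two tensors coincide. The only genuine subtlety — and the step I would treat most carefully — is keeping the column/row conventions straight so that the linearized index of the contracted pair $(a,b)$ is written identically in both factors; the remainder is routine index bookkeeping.
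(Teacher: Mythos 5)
Your proof is correct and follows essentially the same route as the paper's: expand $(\tens{T}:\tens{X})_{ijk}$ via \Cref{def:contraction}, compute the entries of $\cvec_{23}\,\tens{T}$ and $\rvec_{12}\cvec_{34}\,\tens{X}$ under the composite label $(b-1)r+a$, and match the matrix-product entry with the contraction entry. Your added remark about why the row-wise convention on $\rvec_{12}$ is the one that makes the shared index line up is a nice touch but does not change the argument.
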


\begin{lemma}[Iterated vectorization and the Kronecker product]\label{lem:slicktrick}
    Let $~A,~B,~C$ be matrices potentially with different dimensions, and let $\rvec_{135}=\rvec_{13}\rvec_{35}$.
    Then,
    \begin{align*}
    ~A\otimes_K~B\otimes_K~C = \rvec_{135}\rvec_{246}\lr{~A\otimes~B\otimes~C}.
    \end{align*}
\end{lemma}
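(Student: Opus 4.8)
The plan is to prove the identity entrywise, after first fixing the meaning of the composite partial vectorization $\rvec_{135}\rvec_{246}$. Write $~A\in\mathbb{R}^{m_A\times n_A}$, $~B\in\mathbb{R}^{m_B\times n_B}$, $~C\in\mathbb{R}^{m_C\times n_C}$, so that $~A\otimes~B\otimes~C$ is the order-$6$ tensor with entries $(~A\otimes~B\otimes~C)_{i_A j_A i_B j_B i_C j_C}=\mathrm{A}_{i_A j_A}\mathrm{B}_{i_B j_B}\mathrm{C}_{i_C j_C}$, where the six slots are, in order, the row of $~A$, the column of $~A$, the row of $~B$, the column of $~B$, the row of $~C$, and the column of $~C$. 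The operator $\rvec_{135}$ collapses the three ``row'' slots into one and $\rvec_{246}$ collapses the three ``column'' slots into one; since these act on disjoint index sets, the order in which the constituent binary maps $\rvec_{ij}$ are applied is immaterial, and the net result is an order-$2$ tensor, i.e., a matrix. Unwinding the definition \cref{eq:vectorize} of $\rvec_{ij}$ twice, the row-wise collapse of slots $1,3,5$ sends $(i_A,i_B,i_C)$ to $\rho\coloneqq((i_A-1)m_B+i_B-1)m_C+i_C$, and likewise $\rvec_{246}$ sends $(j_A,j_B,j_C)$ to $\gamma\coloneqq((j_A-1)n_B+j_B-1)n_C+j_C$. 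It therefore suffices to show that both sides of the claimed identity have $(\rho,\gamma)$-entry equal to $\mathrm{A}_{i_A j_A}\mathrm{B}_{i_B j_B}\mathrm{C}_{i_C j_C}$ for every tuple $(i_A,j_A,i_B,j_B,i_C,j_C)$.

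For the right-hand side this is immediate: by the definition of $\rvec$ and of the outer product,
\begin{align*}
    \big(\rvec_{135}\rvec_{246}(~A\otimes~B\otimes~C)\big)_{\rho\gamma}
    &=(~A\otimes~B\otimes~C)_{i_A j_A i_B j_B i_C j_C}
    =\mathrm{A}_{i_A j_A}\mathrm{B}_{i_B j_B}\mathrm{C}_{i_C j_C}.
\end{align*}
For the left-hand side, recall that $\otimes_K$ is the ordinary Kronecker product, which is associative, so $~A\otimes_K~B\otimes_K~C$ is the standard triple Kronecker product; its block structure gives $(~A\otimes_K~B\otimes_K~C)_{\rho'\gamma'}=\mathrm{A}_{i_A j_A}\mathrm{B}_{i_B j_B}\mathrm{C}_{i_C j_C}$ precisely when $\rho'$ is the row-major flattening of $(i_A,i_B,i_C)$ and $\gamma'$ the row-major flattening of $(j_A,j_B,j_C)$ — that is, when $\rho'=\rho$ and $\gamma'=\gamma$. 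Comparing the two expressions completes the argument. If one prefers not to invoke associativity directly, the same conclusion follows by applying the two-factor identity $~X\otimes_K~Y=\rvec_{13}\rvec_{24}(~X\otimes~Y)$ twice — once to $~A\otimes_K~B$ and once to $(~A\otimes_K~B)\otimes_K~C$ — together with the observation that applying $\rvec$ to the first factor of an outer product commutes with appending the remaining factor, so that $\rvec_{13}\rvec_{24}\big(\rvec_{13}\rvec_{24}(~A\otimes~B)\otimes~C\big)=\rvec_{13}\rvec_{24}\rvec_{13}\rvec_{24}(~A\otimes~B\otimes~C)$, which one then checks equals $\rvec_{135}\rvec_{246}(~A\otimes~B\otimes~C)$ by the same index count.

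There is no real obstacle here; the entire content is index bookkeeping. The one place where care is required is confirming that the row-major convention baked into \cref{eq:vectorize} — the combined index being $(k_i-1)N_j+k_j$ — coincides with the row/column index convention of the usual Kronecker product, and tracking the position shifts when the composite operators $\rvec_{135}$ and $\rvec_{246}$ are expanded into their binary pieces. Once the flattening maps $\rho$ and $\gamma$ are written out explicitly, both sides collapse to the single scalar $\mathrm{A}_{i_A j_A}\mathrm{B}_{i_B j_B}\mathrm{C}_{i_C j_C}$, and the proof is finished.
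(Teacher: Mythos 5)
Your proof is correct, but it takes a different route from the paper's. The paper argues algebraically in three lines: it expands $~A\otimes_K(~B\otimes_K~C)$ using the two-factor definition $~X\otimes_K~Y=\rvec_{13}\rvec_{24}(~X\otimes~Y)$, observes that the inner vectorization of $~B\otimes~C$ can be pulled outside the outer product with $~A$ at the cost of shifting its index labels by two (so $\rvec_{13}\rvec_{24}$ becomes $\rvec_{35}\rvec_{46}$), and then reorders the four binary vectorizations into $\rvec_{13}\rvec_{35}\rvec_{24}\rvec_{46}$. You instead verify the identity entrywise, writing out the explicit flattening maps $\rho=((i_A-1)m_B+i_B-1)m_C+i_C$ and $\gamma=((j_A-1)n_B+j_B-1)n_C+j_C$ and checking that both sides place $\mathrm{A}_{i_Aj_A}\mathrm{B}_{i_Bj_B}\mathrm{C}_{i_Cj_C}$ at position $(\rho,\gamma)$; your index formulas are consistent with the definition \cref{eq:vectorize} and with the standard Kronecker block convention, and your closing "alternative" sketch is essentially the paper's argument with the opposite grouping $(~A\otimes_K~B)\otimes_K~C$. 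What each approach buys: the paper's version is shorter and leans entirely on the two-factor identity, while yours is more self-contained and has the side benefit of pinning down precisely what the composite operators $\rvec_{135}$ and $\rvec_{246}$ mean as maps on multi-indices, a point the paper's notation leaves implicit since the index positions renumber after each binary merge. Your remark that the binary pieces act on disjoint index sets and hence commute is doing the same work as the paper's unproved reordering step, so neither argument is more rigorous than the other on that point.
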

\begin{proof}
    By the definition of the Kronecker product along with its associativity, it follows that
    \begin{align*}
        ~A\otimes_K\lr{~B\otimes_K~C} &= \rvec_{13}\rvec_{24}\lr{~A\otimes\rvec_{13}\rvec_{24}\lr{~B\otimes~C}} \\
        &= \rvec_{13}\rvec_{24}\lr{\rvec_{35}\rvec_{46}\lr{~A\otimes~B\otimes~C}}\\
        &= \rvec_{13}\rvec_{35}\rvec_{24}\rvec_{46}\lr{~A\otimes~B\otimes~C},
    \end{align*}
    which immediately implies the conclusion.
\end{proof}

\section{Adjoints and gradients}\label{app:morthonormal}
This appendix briefly outlines the relationship between adjoints and gradients with respect to different inner products.

\begin{lemma}[Weighted adjoint]\label{thm:Madjoint}
Suppose $~A\in\mathbb{R}^{N\times N}$ and $~x,~y\in\mathbb{R}^N$.  Then the adjoint $~A^*$ of $~A$ with respect to the inner product $\IP{~x}{~y}_{~M} = ~x\trp~M~y$ defined by the symmetric and positive definite matrix $~M\in\mathbb{R}^{N\times N}$ is $~A^* = ~M^{-1}~A\trp~M$.
\end{lemma}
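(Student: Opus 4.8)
The plan is to work directly from the defining property of the adjoint with respect to a weighted inner product and then solve the resulting matrix identity for $~A^*$. Recall that $~A^*$ is characterized by the requirement that $\IP{~A~x}{~y}_{~M} = \IP{~x}{~A^*~y}_{~M}$ hold for all $~x,~y\in\mathbb{R}^N$; the first step is to expand both sides using the definition $\IP{~u}{~v}_{~M} = ~u\trp~M~v$.

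Concretely, I would compute $\IP{~A~x}{~y}_{~M} = (~A~x)\trp~M~y = ~x\trp~A\trp~M~y$ and $\IP{~x}{~A^*~y}_{~M} = ~x\trp~M~A^*~y$. Equating these for all $~x,~y$ forces the matrix identity $~A\trp~M = ~M~A^*$. Since $~M$ is symmetric positive definite, it is invertible, so I can left-multiply by $~M^{-1}$ to obtain $~A^* = ~M^{-1}~A\trp~M$, which is the claimed formula.

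The only subtlety worth stating explicitly is well-definedness and uniqueness of the adjoint: because $~M$ is positive definite, the bilinear form $\IP{\cdot}{\cdot}_{~M}$ is nondegenerate, so if two matrices $~B_1,~B_2$ both satisfied the defining identity then $~x\trp~M(~B_1-~B_2)~y = 0$ for all $~x,~y$ would give $~M(~B_1-~B_2)=~0$ and hence $~B_1=~B_2$. This justifies passing from ``for all $~x,~y$'' to the matrix equation $~A\trp~M = ~M~A^*$. There is no real obstacle here; the argument is a one-line computation, and the main thing to be careful about is simply invoking invertibility of $~M$ (guaranteed by positive definiteness) before inverting it.
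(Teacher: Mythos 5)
Your proposal is correct and is essentially the same computation as the paper's proof, which simply expands $\IP{~A~x}{~y}_{~M} = ~x\trp~A\trp~M~y = ~x\trp~M(~M^{-1}~A\trp~M)~y$ and reads off the adjoint; you run the identity in the reverse direction (solving $~A\trp~M = ~M~A^*$ for $~A^*$) and additionally record uniqueness via nondegeneracy of the form, which the paper leaves implicit but is a harmless and correct addition.
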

\begin{proof}
    Observe that
    \begin{align*}
    \IP{~A~x}{~y}_{~M}
    = ~x\trp~A\trp~M~y
    = ~x\trp~M\lr{~M^{-1}~A\trp~M}~y
    = \IP{~x}{(~M^{-1}~A\trp~M)~y}_{~M}.
\end{align*}
\end{proof}

The adjoint of a non-square matrix $~U\in\mathbb{R}^{N\times r}$ is only slightly more delicate, requiring weighted inner products on both $\mathbb{R}^N$ and $\mathbb{R}^r$.

\begin{lemma}[Weighted non-square adjoint]
    Let $~U\in\mathbb{R}^{N\times r}$, $\hat{~x}\in\mathbb{R}^r$, $~y\in\mathbb{R}^N$, and suppose $~M\in\mathbb{R}^{N\times N}$ and $\hat{~M}\in \mathbb{R}^{r\times r}$ are symmetric positive definite.  Then the adjoint of $~U$ with respect to the $~M$ and $\hat{~M}$ inner products is $~U^* = \hat{~M}^{-1}~U\trp~M$.
\end{lemma}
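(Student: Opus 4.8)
The plan is to verify directly that the proposed matrix satisfies the defining relation of the adjoint with respect to the two weighted inner products, in complete analogy with the proof of \Cref{thm:Madjoint}. Write $\IP{~a}{~b}_{~M} = ~a\trp~M~b$ for $~a,~b\in\mathbb{R}^N$ and $\IP{\hat{~a}}{\hat{~b}}_{\hat{~M}} = \hat{~a}\trp\hat{~M}\hat{~b}$ for $\hat{~a},\hat{~b}\in\mathbb{R}^r$. The adjoint $~U^*\in\mathbb{R}^{r\times N}$ is by definition the unique matrix for which $\IP{~U\hat{~x}}{~y}_{~M} = \IP{\hat{~x}}{~U^*~y}_{\hat{~M}}$ holds for every $\hat{~x}\in\mathbb{R}^r$ and $~y\in\mathbb{R}^N$; uniqueness is guaranteed because $\hat{~M}$ is positive definite, hence the bilinear form $\IP{\cdot}{\cdot}_{\hat{~M}}$ is non-degenerate.

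First I would expand the left-hand side as $\IP{~U\hat{~x}}{~y}_{~M} = (~U\hat{~x})\trp~M~y = \hat{~x}\trp~U\trp~M~y$. Next I would insert the identity $~I = \hat{~M}^{-1}\hat{~M}$, valid since $\hat{~M}$ is invertible, to rewrite this as $\hat{~x}\trp\hat{~M}\lr{\hat{~M}^{-1}~U\trp~M}~y = \IP{\hat{~x}}{\lr{\hat{~M}^{-1}~U\trp~M}~y}_{\hat{~M}}$. Comparing with the defining relation and invoking uniqueness then yields $~U^* = \hat{~M}^{-1}~U\trp~M$, as claimed. There is no real obstacle here: the computation is a two-line rearrangement, and the only subtlety\textemdash the invertibility of $\hat{~M}$, needed both to form $\hat{~M}^{-1}$ and to guarantee that the adjoint is well-defined and unique\textemdash is supplied directly by the positive-definiteness hypothesis.
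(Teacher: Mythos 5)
Your proof is correct and follows exactly the same route as the paper's: expand $\IP{~U\hat{~x}}{~y}_{~M}=\hat{~x}\trp~U\trp~M~y$, insert $\hat{~M}\hat{~M}^{-1}$, and read off the adjoint from the $\hat{~M}$-weighted inner product. The extra remark on uniqueness via non-degeneracy of $\IP{\cdot}{\cdot}_{\hat{~M}}$ is a welcome but inessential addition that the paper leaves implicit.
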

\begin{proof}
    Similar to before, observe that
    \begin{align*}
        \IP{~U\hat{~x}}{~y}_{~M}
        = \hat{~x}\trp~U\trp~M~y
        = \hat{~x}\trp\hat{~M}\lr{\hat{~M}^{-1}~U\trp~M}~y
        = \IP{\hat{~x}}{(\hat{~M}^{-1}~U\trp~M)~y}_{\hat{~M}}.
\end{align*}
\end{proof}

The previous results connect the $~M$-adjoint to the Euclidean adjoint. The next result establishes the same connection for the gradient operator on scalar functions.

\begin{lemma}[Weighted gradient]
    Let $f:\mathbb{R}^N\to\mathbb{R}$ be a differentiable function and $~M\in\mathbb{R}^{N\times N}$ be a symmetric positive definite matrix.  Then the gradient of $f$ with respect to the $~M$-weighted inner product is $\Mnabla f = ~M^{-1}\nabla f$.
\end{lemma}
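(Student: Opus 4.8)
The plan is to invoke the defining property of the gradient relative to an inner product and then close the argument with the non-degeneracy of $~M$. Recall that for a differentiable $f:\mathbb{R}^N\to\mathbb{R}$, the gradient $\Mnabla f$ with respect to the $~M$-weighted inner product $\IP{\cdot}{\cdot}_{~M}$ is the unique vector representing the differential $df$ at the point in question, i.e., the unique vector satisfying $\IP{\Mnabla f}{~v}_{~M} = df(~v)$ for every direction $~v\in\mathbb{R}^N$. The Euclidean gradient $\nabla f$ is characterized by exactly the same identity with respect to the standard inner product, namely $df(~v) = \IP{\nabla f}{~v} = (\nabla f)\trp~v$.

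First I would equate these two representations of $df(~v)$: for all $~v\in\mathbb{R}^N$,
\[
(\Mnabla f)\trp~M~v = \IP{\Mnabla f}{~v}_{~M} = df(~v) = (\nabla f)\trp~v,
\]
so that $\bigl(~M\,\Mnabla f - \nabla f\bigr)\trp~v = 0$ for every $~v$. Ranging $~v$ over a basis (equivalently, taking $~v = ~M\,\Mnabla f - \nabla f$) forces $~M\,\Mnabla f = \nabla f$, and since $~M$ is symmetric positive definite it is invertible, whence $\Mnabla f = ~M^{-1}\nabla f$ as claimed.

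There is essentially no obstacle here; the argument is the scalar-function analogue of the weighted-adjoint computation in \Cref{thm:Madjoint}, and in fact one can alternatively phrase it in a single line, $\IP{\nabla f}{~v} = \IP{~M^{-1}\nabla f}{~v}_{~M}$, and appeal to uniqueness of the representing vector. The only point requiring a word of care is the appeal to non-degeneracy: the identity $(~M\,\Mnabla f - \nabla f)\trp~v = 0$ holding for all $~v$ is what forces the bracketed vector to vanish, and this is precisely where invertibility of $~M$ (guaranteed by positive definiteness) is used.
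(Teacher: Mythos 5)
Your argument is correct and is essentially the same as the paper's: the paper likewise writes $df(~v) = ~v\trp\nabla f = ~v\trp~M(~M^{-1}\nabla f) = \IP{~v}{~M^{-1}\nabla f}_{~M}$ and concludes from the arbitrariness of $~v$ (i.e., uniqueness of the representing vector) that $\Mnabla f = ~M^{-1}\nabla f$. Your explicit remark on where non-degeneracy of $~M$ enters is a minor clarification of the same step.
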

\begin{proof}
    For any $~x,~v\in\mathbb{R}^N$, it follows from Taylor's theorem that
    \begin{align*}
        df|_{~x}(~v)
        = ~v\trp\nabla f(~x)
        = ~v\trp~M~M^{-1}\nabla f(~x)
= \IP{~v}{~M^{-1} \nabla f(~x)}.
\end{align*}
    Since $~v$ is arbitrary, this implies that $\nabla f = ~M\Mnabla f$ and the conclusion follows.
\end{proof}

The final result shows why affine-parametric state matrices can be assumed to be self-adjoint in the Hamiltonian systems context.

\begin{lemma}[Self-adjointness in quadratic functionals]\label{thm:selfadjoint}
Let $~A,~M\in\mathbb{R}^{N\times N}$ and assume that $~M$ is symmetric and positive definite.
Only the self-adjoint part of $~A$ contributes to the value of the quadratic functional $H:\mathbb{R}^{N}\to\mathbb{R}$ given by $H(~y) = ~y^{*}~A~y = ~y\trp~M~A~y$.
Specifically, $H(~y)$ is proportional to $~y^{*}(~A + ~A^{*})~y$.

\begin{proof}
Decomposing $~A$ into adjoint and skew-adjoint parts,
\begin{align*}
    H(~y)
    &= \frac{1}{2}~y^{*}(~A + ~A^{*})~y + \frac{1}{2}~y^{*}(~A - ~A^{*})~y.
\end{align*}
Because $~M$ is symmetric,
\begin{align*}
    ~y^{*}~A~y
= \lr{~y\trp~M~A~y}\trp
    = ~y\trp~M~M^{-1}~A\trp~M~y
    = ~y^{*}~A^{*}~y,
\end{align*}
and hence $~y^{*}(~A - ~A^{*})~y = 0$.
\end{proof}
\end{lemma}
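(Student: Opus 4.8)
The plan is to reduce the claim to the elementary fact that a scalar equals its own transpose, combined with the explicit formula for the $~M$-adjoint supplied by \Cref{thm:Madjoint}. First I would invoke \Cref{thm:Madjoint} to write the $~M$-adjoint of $~A$ as $~A^{*} = ~M^{-1}~A\trp~M$, so that $~A$ decomposes as the sum of its self-adjoint part $\tfrac12\lr{~A+~A^{*}}$ and its skew-adjoint part $\tfrac12\lr{~A-~A^{*}}$. Substituting this decomposition into $H(~y) = ~y^{*}~A~y$ and using that the map $~A\mapsto ~y^{*}~A~y$ is linear splits $H(~y)$ into the two pieces $\tfrac12~y^{*}\lr{~A+~A^{*}}~y$ and $\tfrac12~y^{*}\lr{~A-~A^{*}}~y$.

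The second step is to show that the skew-adjoint piece contributes nothing, i.e.\ $~y^{*}\lr{~A-~A^{*}}~y = 0$ for every $~y$. Here I would use that $H(~y) = ~y\trp~M~A~y$ is a $1\times 1$ quantity, hence invariant under transposition; since $~M\trp = ~M$, transposing gives $~y\trp~A\trp~M~y$, and inserting $~M~M^{-1} = ~I$ rewrites this as $~y\trp~M\lr{~M^{-1}~A\trp~M}~y = ~y^{*}~A^{*}~y$. Thus $~y^{*}~A~y = ~y^{*}~A^{*}~y$, which is exactly the vanishing of the skew-adjoint contribution. Combining with the first step yields $H(~y) = \tfrac12~y^{*}\lr{~A+~A^{*}}~y$, so $H$ is proportional (with proportionality constant $\tfrac12$) to the quadratic form of the self-adjoint part of $~A$, as claimed.

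I do not expect a real obstacle here; the argument is a two-line computation. The only point requiring minor vigilance is the bookkeeping with $~M$ and $~M^{-1}$ when recognizing $~A^{*}$ inside the transposed expression, together with keeping in mind that it is the symmetry $~M\trp = ~M$, rather than ordinary transposition of $~A$, that makes the quadratic form sensitive only to the $~M$-self-adjoint part of $~A$.
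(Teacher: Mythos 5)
Your proposal is correct and follows exactly the same route as the paper's proof: decompose $~A$ into its $~M$-self-adjoint and skew-adjoint parts, then use the scalar-transpose identity together with $~M\trp = ~M$ to recognize $~y^{*}~A~y = ~y^{*}~A^{*}~y$ and kill the skew-adjoint contribution. No gaps.
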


\section{Wave equation finite element model}
\label{app:systems}
This appendix derives the matrix-vector representations of the full-order finite element discretization used in \Cref{sec:numerics-wave} for the parameterized wave equation~\cref{eq:wave}. Suppose $\{\phi_i\}_{i=1}^{N_W}$ and $\{~\psi_i\}_{i=1}^{N_V}$ are basis functions for the spaces $W_h$ and $V_h$, respectively, and let $~q, ~p$, and $~\sigma$ denote the vectors containing the degrees of freedom of $q_h$, $p_h$, and $~\sigma_h$. By using the test function $w_h = ~\psi_j$ and the expansions $~\sigma_h = \sum_{i=1}^{N_V}\sigma_i(t)~\psi_i(~x)$ and $q_h = \sum_{i=1}^{N_W}q_i(t)\phi_i(~x)$ in~\cref{eq:sigmah}, it follows that
\begin{align*}
    \sum_{i=1}^{N_V} \left( \frac{1}{c(~\mu)^2} ~\psi_i, ~\psi_j \right)_{\Omega_h}\sigma_i = - \sum_{i=1}^{N_W}\left(\phi_i, \nabla\cdot~\psi_j\right)_{\Omega_h}q_i.
\end{align*}
The above equation can be converted to the matrix-vector form
\begin{align}\label{eq:sigmasolve}
    ~M_V(~\mu) ~\sigma = - ~S ~q,
\end{align}
where the matrices $~M_V(~\mu) \in \mathbb{R}^{N_V\times N_V}$ and $~S \in \mathbb{R}^{N_V \times N_W}$ have components
\begin{align*}
    \lr{~M_V(~\mu)}_{ij}
    = \lr{c(~\mu)^{-2}~\psi_i, ~\psi_j}_{\Omega_h}
= \sum_{k=1}^p \frac{1}{\mu_k^{2}} \lr{~\psi_i,~\psi_j}_{\Omega_k}
    \qquad
    \lr{~S}_{ji}
    = \lr{\phi_i, \nabla\cdot~\psi_j}_{\Omega_h}.
\end{align*}
Note that matrix $~M_V(~\mu)$ depends affinely on $~\mu$ with coefficients $~\mu' = [\,\mu_1^{-2}\,\cdots\,\mu_p^{-2}\,]\trp$.
Hence, $~M_V(~\mu) = \tens{T}~\mu'$ for the constant tensor $\tens{T}\in\mathbb{R}^{N_V\times N_V \times p}$ with entries
$\mathrm{T}_{ijk} = \lr{~\psi_i,~\psi_j}_{\Omega_k}$.
On the other hand, inserting the expansion $p_h = \sum_{i=1}^{N_W} p_i(t)\phi_i(~x)$ and the above expansion for $~\sigma_h$ into~\cref{eq:qhphb} and testing against $w_h=\phi_j$ yields
\begin{align*}
    \lr{~M_W\dot{~p}}_j = \sum_{i=1}^{N_W}\left(\phi_i, \phi_j\right)_{\Omega_h}\dot p_i =
    \sum_{i=1}^{N_V}
    \left(\nabla\cdot ~\psi_i, \phi_j\right)_{\Omega_h}\sigma_i = \lr{~S\trp~\sigma}_j.
\end{align*}
By substituting for $~\sigma$ from~\cref{eq:sigmasolve}, the equation can be written equivalently as
\begin{align}\label{eq:epvec}
    ~M_W\dot{~p}
    = -~S\trp~M_V(~\mu)^{-1}~S~q
    = -~S\trp\lr{\tens{T}~\mu'}^{-1}~S~q.
\end{align}
Finally, using the test function $w_h = \phi_j$ and the above expansions of $q_h$ and $p_h$, in \cref{eq:qhpha},
\begin{align}\label{eq:eqvec}
    \lr{~M_W\dot{~q}}_j
    = \sum_{i=1}^{N_W}\left(\phi_i, \phi_j\right)_{\Omega_h}\dot q_i
    = \sum_{i=1}^{N_W}\left(\phi_i, \phi_j\right)_{\Omega_h}p_i = (~M_W~p)_j.
\end{align}
Together, \cref{eq:epvec} and \cref{eq:eqvec} lead to the following block form of the FOM~\cref{eq:mixedscheme},
\begin{align*}
    \dot{~y} &=
    \left[\begin{array}{c}
        \dot{~q}\\
        \dot{~p}
    \end{array}\right]
    =
    \left[\begin{array}{ c  c }
    ~0 & ~I \\
    -~I & ~0
    \end{array}\right]\left[\begin{array}{ c  c }
        ~M_W^{-1}~S\trp~M_V(~\mu)^{-1}~S & ~0 \\
        ~0 & ~I
    \end{array}\right]
    \left[\begin{array}{c}
        ~q\\
        ~p
    \end{array}\right].
\end{align*}
As a final point, the expression for the discrete Hamiltonian in~\cref{eq:discreteH} follows from (suppressing the dependence on $~\mu$)
\begin{equation*}
\begin{split}
    \begin{aligned}
        H_h(q_h,p_h)
        &= \frac{1}{2}\left(\sum_{i=1}^{N_W}p_i\phi_i,\sum_{j=1}^{N_W}p_j\phi_j\right)_{\Omega_h} + \frac{1}{2}\left(\frac{1}{c^2(\cdot,~\mu)} \sum_{i=1}^{N_V}\sigma_i~\psi_i,\sum_{j=1}^{N_V}\sigma_j~\psi_j\right)_{\Omega_h}
        \\
        &= \frac{1}{2}\sum_{i=1}^{N_W}p_i\sum_{j=1}^{N_W}\left(\phi_i,\phi_j\right)_{\Omega_h}p_j + \frac{1}{2}\sum_{i=1}^{N_V}\sigma_i\sum_{j=1}^{N_V}\lr{\frac{1}{c(\cdot,~\mu)^2}~\psi_i,~\psi_j}_{\Omega_h}\sigma_j
        \\
        &= \frac{1}{2}\sum_{i=1}^{N_W}\sum_{j=1}^{N_W}p_i (~M_W)_{ij}p_j + \frac{1}{2}\sum_{i=1}^{N_V}\sum_{j=1}^{N_V}\sigma_i(~M_V)_{ij}\sigma_j
        \\
        &= \frac{1}{2} ~p\trp ~M_W ~p + \frac{1}{2}~\sigma\trp ~M_V ~\sigma
        \\
        &= \frac{1}{2}~p\trp ~M_W ~p + \frac{1}{2}(-~M_V^{-1} ~S ~q)\trp ~M_V (-~M_V^{-1} ~S ~q)
        \\
        &= \frac{1}{2}~p\trp ~M_W ~p + \frac{1}{2} ~q\trp ~S\trp ~M_V^{-1} ~S ~q
        \\
        &= \frac{1}{2}\langle ~p, ~p \rangle_{~M_W} + \frac{1}{2}\langle ~q, ~M_W^{-1}~S\trp ~M_V^{-1} ~S~q \rangle_{~M_W},
    \end{aligned}
\end{split}
\end{equation*}
where $\langle ~a, ~b \rangle_{~M_W} = ~a\trp ~M_W ~b$ denotes the $~M_W$-weighted inner product. \end{appendices}

\bibliographystyle{siamplain} \bibliography{references}

\end{document}